\newcommand{\scal}[2]{\langle #1,#2\rangle}
\newcommand{\rr}[1]{\mathbf R^{#1}}
\newcommand{\cc}[1]{\mathbf C^{#1}}
\newcommand{\zz}[1]{\mathbf Z^{#1}}
\newcommand{\nm}[2]{\Vert #1\Vert _{#2}}
\newcommand{\Nm}[2]{\left \Vert #1 \right \Vert _{#2}}
\newcommand{\sets}[2]{\{ \, #1\, ;\, #2\, \} }
\newcommand{\Sets}[2]{\left \{ \, #1\, ;\, #2\, \right \} }
\newcommand{\ep}{\varepsilon}
\newcommand{\cdo}{\, \cdot \, }
\newcommand{\eabs}[1]{\langle #1\rangle}     
\newcommand{\vrum}{\vspace{0.1cm}}
\newcommand{\rd}{\mathbf{R} ^{d}}
\newcommand{\rdd}{\mathbf{R} ^{2d}}
\newcommand{\zd}{\mathbf{Z}^{d}}
\newcommand{\zdd}{\mathbf{Z}^{2d}}
\newcommand{\nn}[1]{{\mathbf N}^{#1}}
\newcommand{\maclB}{\mathcal B}
\newcommand{\maclE}{\mathcal E}
\newcommand{\maclH}{\mathcal H}
\newcommand{\maclM}{\mathcal M}
\newcommand{\maclS}{\mathcal S}
\newcommand{\mascB}{\mathscr B}
\newcommand{\mascE}{\mathscr E}
\newcommand{\mascF}{\mathscr F}
\newcommand{\mascP}{\mathscr P}
\newcommand{\mascS}{\mathscr S}
\numberwithin{equation}{section}          
\newtheorem{thm}{Theorem}
\numberwithin{thm}{section}
\newcommand{\rubrik}{}
\newtheorem{prop}[thm]{Proposition}
\newtheorem{cor}[thm]{Corollary}
\newtheorem{lemma}[thm]{Lemma}
\theoremstyle{definition}
\newtheorem{defn}[thm]{Definition}
\theoremstyle{remark}
\newtheorem{rem}[thm]{Remark}
\author{Joachim Toft}
\address{Department of Mathematics,
Linn{\ae}us University, V{\"a}xj{\"o}, Sweden}
\email{joachim.toft@lnu.se}
\author{R{\"u}ya {\"U}ster}
\address{Department of Mathematics,
Faculty of Science, \.{I}stanbul University, \.{I}stanbul, Turkey}
\email{ruya.uster@istanbul.edu.tr}
\author{Elmira Nabizadeh}
\address{Department of Mathematics,
Linn{\ae}us University, V{\"a}xj{\"o}, Sweden}
\email{elmira.nabizadehmorsalfard@lnu.se}
\author{Serap {\"O}ztop}
\address{Department of Mathematics,
Faculty of Science, \.{I}stanbul University, \.{I}stanbul, Turkey}
\email{oztops@istanbul.edu.tr}
\title{Continuity and Bargmann mapping properties of
quasi-Banach Orlicz modulation spaces}
\keywords{
Gabor analysis, Gelfand-Shilov spaces, Pilipovi{\'c} spaces, ultradistributions,
Bargmann transform
}
\subjclass[2010]{
primary: 46E30, 42B35, 46A16, 46F10
secondary: 44A35, 32A25
}
\begin{document}

\begin{abstract}
We deduce continuity, compactness and invariance properties for
quasi-Banach Orlicz modulation spaces. We characterize such spaces
in terms of Gabor expansions and by their images under the Bargmann
transform.
\end{abstract}

\maketitle

\par

\section{Introduction}\label{sec0}

\par

In the paper we extend the analysis in \cite{GaSa,Gc2} concerning classical
modulation spaces,  $M^{p,q}_{(\omega)}(\rd)$, and in \cite{SchF} concerning
Banach Orlicz modulation spaces to quasi-Banach Orlicz modulation spaces (quasi-Orlicz
modulation spaces), $M^{\Phi _1,\Phi _2}_{(\omega)}(\rd)$.
Here $\Phi _1$, $\Phi _2$ are quasi-Young functions of certain degrees. We refer
to \cite{Ho1} and Section 1 for notations.

\par

Resembling on classical modulation spaces, Orlicz modulation
spaces are defined by imposing a mixed  $L^{\Phi _1,\Phi _2}_{(\omega)}$
(quasi-)norm condition on the short-time Fourier transforms of the involved
distributions.

\par

In the restricted case when $\Phi _1$ and $\Phi _2$ above are Young functions,
corresponding Orlicz modulation spaces,  $M^{\Phi _1,\Phi _2}_{(\omega)}(\rd)$
were introduced and investigated in \cite{SchF} by Schnackers and F{\"u}hr.
Here it is deduced that for such $\Phi _1$ and $\Phi _2$,
$M^{\Phi _1,\Phi _2}_{(\omega)}(\rd)$ is a Banach space and admit in similar
ways as for classical modulation spaces, characterizations by Gabor expansions.
In \cite{SchF} it is also shown that  $M^{\Phi _1,\Phi _2}_{(\omega)}(\rd)$ is
completely determined by the behaviour of $\Phi _1$ and $\Phi _2$ at origin in the sense that if
\begin{equation}\label{Eq:limcond}
\lim
\limits_{t\rightarrow 0^+} \frac{\Psi _1(t)}{\Phi _1(t)}
\quad\text{and}\quad
\lim
\limits_{t\rightarrow 0^+} \frac{\Psi _2(t)}{\Phi _2(t)}
\end{equation}
exist, then
\begin{equation}\label{Eq:Modinc}
 M^{\Phi _1,\Phi _2}_{(\omega)}(\rd)\subseteq M^{\Psi _1,\Psi _2}_{(\omega)}(\rd)
\end{equation}
with continuous embedding.

\par

In our situation, allowing, more generally, $\Phi _1$ and $\Phi _2$ to be quasi-Young
functions, we show that these and several other continuity properties in
\cite{GaSa,Gc2,PfTo,Toft10,Toft18}, for classical modulation spaces, carry
over to Orlicz modulation spaces.

\par

More precisely, we show that  $M^{\Phi _1,\Phi _2}_{(\omega)}(\rd)$ are quasi-Banach
spaces, and deduce invariance properties
concerning the choices of window functions in the quasi-norms of
the short-time Fourier transforms.
By our general continuity results and similar arguments as for classical modulation
spaces, it follows that the injection map
    \begin{equation*}
           i: M^{\Phi _1,\Phi _2}_{(\omega_1)} (\rd)\rightarrow M^{\Phi _1,\Phi _2}_{(\omega_2)}(\rd)
    \end{equation*}
is compact, if and only if $\omega_2/\omega_1$ tends to zero at infinity. This extends results
in \cite{PfTo} to the Orlicz modulation space case. A part of the analysis concerns investigations
of mapping properties of Orlicz modulation spaces under the Bargmann transform, of
independent interest, given in Section \ref{sec2}. These investigations lead to that the
Bargmann transform is isometric and bijective
from $ M^{\Phi _1,\Phi _2}_{(\omega)} (\rd)$ to certain weighted versions of
$L^{\Phi _1,\Phi _2}_{(\omega)} (\rdd ) \simeq L^{\Phi _1,\Phi _2}_{(\omega)}(\cc d)$
of entire analytic
functions on $\cc d$.

\par

Several of these properties follow from our characterizations of Orlicz modulation
spaces in terms of Gabor expansions, given in Section 4. In fact, here it is proved
that for a distribution $f$, lattice $\Lambda \subseteq \rd$ and
suitable (window) functions  $\phi$ and $\psi$ on $\rd$, then the analysis
and synthesis operators,
$$
(C_\phi f)= \{ (V_\phi f)(j,\iota )\} _{j,\iota \in \Lambda}
\quad \text{and}\quad
(D_\psi c) = \sum _{j,\iota \in \Lambda}  c(j,\iota)e^{i\scal \cdo \iota}\psi (\cdo -j)
$$
are continuous between the spaces $M^{\Phi _1, \Phi _2}_{(\omega )}(\rr d)$ and
$\ell ^{\Phi _1, \Phi _2}_{(\omega )}(\Lambda \times \Lambda )$. These properties
leads to that Orlicz modulation spaces possess Gabor properties in the sense
that for each suitable (ultra-)distribution $f$ we have
$$
f(x)=\sum \limits _{j,\iota \in \Lambda}(V_\phi f)(j,\iota)e^{i \scal x\iota}\psi(x-j),
$$
and that
\begin{equation}\label{Eq:GabEqui}
f\in M^{\Phi _1,\Phi _2}_{(\omega)} (\rd)\quad \Leftrightarrow \quad
\{ (V_\phi f)(j,\iota)\}_{j,\iota \in \Lambda} \in \ell^{\Phi _1, \Phi _2}
_{(\omega)}(\Lambda\times \Lambda),
\end{equation}
provided that the lattice $\Lambda$ is enough dense.
In particular, the Gabor analysis for classical modulation spaces in
\cite{GaSa,Gc2} are extended to quasi-Orlicz modulation spaces.

\par

The paper is organized as follows. In Section \ref{sec1} we recall some
basic properties on Gelfand-Shilov spaces, weight functions, Pilipovi{\'c} spaces,
Orlicz spaces, and introduce quasi-Orlicz modulation spaces. Here
we also recall some properties for classical modulation spaces concerning
Gabor analysis, images under pseudo-differential operators and under the Bargmann
transform.

\par

In Section \ref{sec2} we deduce mapping properties of Orlicz modulation spaces
under the Bargmann transform. At the same time we prove that they are
complete, and thereby quasi-Banach spaces. In Section \ref{sec3} we obtain convolution
estimates for quasi-Orlicz spaces.

\par

In Section \ref{sec4}, we apply the convolution results in Section \ref{sec3} to extend
the Gabor analysis for classical modulation spaces to quasi-Orlicz modulation
spaces. In particular we deduce \eqref{Eq:GabEqui} for quasi-Young functions
$\Phi _1$ and $\Phi _2$. This extends \cite[Theorem 9]{SchF} by Schnacker and F{\"u}hr.
We also apply the analysis to deduce basic continuity properties of
such spaces. For example we show invariance properties with respect to the choice of
window function, and use the equivalence \eqref{Eq:GabEqui} to show that
\eqref{Eq:limcond}
leads to \eqref{Eq:Modinc}, also when $\Phi _1$ and $\Phi _2$ are
quasi-Young functions.

\par

\section{Preliminaries}\label{sec1}

\par

In this section we make a review of some basic facts.
In the first part we recall the definition and explain some well-known
facts about Gelfand-Shilov and Pilipovi{\'c} spaces and their spaces of
(ultra-)distributions. Thereafter we consider (mixed) Orlicz and quasi-Orlicz
spaces and explain some basic properties. Our family of quasi-Orlicz spaces
contain the family of Orlicz spaces, but is not so general compared to
corresponding families in e.{\,}g. \cite{HaH} by Harjulehto
and H{\"a}st{\"o}.

\par

Then we introduce and discuss basic properties of quasi-Banach Orlicz
modulation spaces, which are obtained by imposing quasi-Orlicz norm
estimates on the short-time Fourier transforms of the involved functions
and distributions. Finally we recall some basic facts in Gabor frame
theory, and for the Bargmann transform.

\par

\subsection{Gelfand-Shilov spaces}\label{subsec1.1}
 We start by discussing Gelfand-Shilov spaces and their properties.
 Let $0<s\in \mathbf R$ be fixed. Then the Gelfand-Shilov
space $\mathcal S_{s}(\rr d)$
 ($\Sigma _{s}(\rr d)$) of Roumieu type (Beurling type) with parameter $s$
 consists of all $f\in C^\infty (\rr d)$ such that
 \begin{equation}\label{gfseminorm}
 \nm f{\mathcal S_{s,h}}\equiv \sup \frac {|x^\beta \partial ^\alpha
 f(x)|}{h^{|\alpha  + \beta |}(\alpha ! \beta !)^s}
 \end{equation}
 is finite for some $h>0$ (for every $h>0$). Here the supremum should be taken
 over all $\alpha ,\beta \in \mathbf N^d$ and $x\in \rr d$. We equip
 $\mathcal S_{s}(\rr d)$ ($\Sigma _{s}(\rr d)$) by the canonical inductive limit
 topology (projective limit topology) with respect to $h>0$, induced by
 the semi-norms in \eqref{gfseminorm}.

 \par

 For any $s,s_0\ge \frac 12$ such that $s_0<s$ we have
\begin{equation}\label{GSembeddings}
 \begin{alignedat}{3}
 \maclS _{s_0}(\rr d)
 &\hookrightarrow &
 \Sigma _{s}(\rr d)
 &\hookrightarrow &
 \maclS _s(\rr d)
 &\hookrightarrow
 \mascS (\rr d),
  \\[1ex]
 \mascS '(\rr d)
 &\hookrightarrow &
 \maclS _s' (\rr d)
 &\hookrightarrow &
 \Sigma _{s}'(\rr d)
 &\hookrightarrow
 \maclS _{s_0}'(\rr d),
 \end{alignedat}
 \end{equation}
 with dense embeddings.
 Here $A\hookrightarrow B$ means that
 the topological spaces $A$ and $B$ satisfy $A\subseteq B$ with
 continuous embeddings.
 The space $\Sigma _s(\rr d)$ is a Fr{\'e}chet space
 with seminorms $\nm \cdot {\mathcal S_{s,h}}$, $h>0$. Moreover,
 $\Sigma _s(\rr d)\neq \{ 0\}$, if and only if $s>1/2$, and
 $\maclS _s(\rr d)\neq \{ 0\}$, if and only
 if $s\ge 1/2$.

 \medspace

 The \emph{Gelfand-Shilov distribution spaces} $\mathcal S_{s}'(\rr d)$
and $\Sigma _s'(\rr d)$ are the dual spaces of $\mathcal S_{s}(\rr d)$ and
$\Sigma _s(\rr d)$, respectively.  As for the Gelfand-Shilov spaces there
 is a canonical projective limit topology (inductive limit topology) for $\maclS _{s}'(\rr d)$
 ($\Sigma _s'(\rr d)$). (Cf. \cite{GS, Pil1, Pil3}.)

 \par

 From now on we let $\mathscr F$ be the Fourier transform which
 takes the form
 $$
 (\mathscr Ff)(\xi )= \widehat f(\xi ) \equiv (2\pi )^{-\frac d2}\int _{\rr
 {d}} f(x)e^{-i\scal  x\xi }\, dx
 $$
 when $f\in L^1(\rr d)$. Here $\scal \cdo \cdo$ denotes the usual
 scalar product on $\rr d$. The map $\mathscr F$ extends
 uniquely to homeomorphisms on $\mathscr S'(\rr d)$,
$\mathcal S_s'(\rr d)$ and on $\Sigma _s'(\rr d)$. Furthermore,
 $\mascF$ restricts to
 homeomorphisms on $\mathscr S(\rr d)$,
 $\mathcal S_s(\rr d)$ and on $\Sigma _s(\rr d)$,
 and to a unitary operator on $L^2(\rr d)$.

 \par

Gelfand-Shilov spaces can in convenient
 ways be characterized in terms of estimates of the functions and their Fourier
 transforms. More precisely, in \cite{ChuChuKim, Eij} it is proved that
 if $f\in \mascS '(\rr d)$ and $s>0$, then $f\in \maclS _s(\rr d)$
 ($f\in \Sigma _s(\rr d)$), if and only if
 \begin{equation}\label{Eq:GSFtransfChar}
 |f(x)|\lesssim e^{-r|x|^{\frac 1s}}
 \quad \text{and}\quad
 |\widehat f(\xi )|\lesssim e^{-r|\xi |^{\frac 1s}},
 \end{equation}
 for some $r>0$ (for every $r>0$).
 Here $g_1 \lesssim g_2$ means that $g_1(\theta ) \le c \cdot  g_2(\theta )$
 holds uniformly for all $\theta$
 in the intersection of the domains of $g_1$ and $g_2$
 for some constant $c>0$, and we
 write $g_1\asymp g_2$
 when $g_1\lesssim g_2 \lesssim g_1$.

 \par

 Gelfand-Shilov spaces and their distribution spaces can also
 be characterized by estimates of short-time Fourier
 transforms, (see e.{\,}g. \cite{GZ,Toft18}).
 More precisely, let $\phi \in \maclS _s (\rr d)$ be fixed. Then the \emph{short-time
 Fourier transform} $V_\phi f$ of $f\in \maclS _s '
 (\rr d)$ with respect to the \emph{window function} $\phi$ is
 the Gelfand-Shilov distribution on $\rr {2d}$, defined by
 \begin{equation} \label{Eq:ShorttimeF}
      V_\phi f(x,\xi )  =
 \mascF (f \, \overline {\phi (\cdo -x)})(\xi ).
 \end{equation}
 If $f ,\phi \in \maclS _s (\rr d)$, then it follows that
 $$
 V_\phi f(x,\xi ) = (2\pi )^{-\frac d2}\int_{\rr d} f(y)\overline {\phi
 (y-x)}e^{-i\scal y\xi}\, dy .
 $$

\par

\subsection{Weight functions}\label{subsec1.2}
A \emph{weight} or \emph{weight function} on $\rr d$ is a
positive function $\omega
\in  L^\infty _{loc}(\rr d)$ such that $1/\omega \in  L^\infty _{loc}(\rr d)$.
The weight $\omega$ is called \emph{moderate},
if there is a positive weight $v$ on $\rr d$ such that
\begin{equation}\label{moderate}
\omega (x+y) \lesssim \omega (x)v(y),\qquad x,y\in \rr d.
\end{equation}
If $\omega$ and $v$ are weights on $\rr d$ such that
\eqref{moderate} holds, then $\omega$ is also called
\emph{$v$-moderate}.
We note that \eqref{moderate}
implies that $\omega$ fulfills
the estimates
\begin{equation}\label{moderateconseq}
v(-x)^{-1}\lesssim \omega (x)\lesssim v(x),\quad x\in \rr d.
\end{equation}
We let $\mascP _E(\rr d)$ be the set of all moderate weights on $\rr d$.

\par

It can be proved that if $\omega \in \mascP _E(\rr d)$, then
$\omega$ is $v$-moderate for some $v(x) = e^{r|x|}$, provided the
positive constant $r$ is large enough (cf. \cite{Gc2.5}). That is,
\eqref{moderate} implies
\begin{equation}\label{Eq:weight0}
\omega (x+y) \lesssim \omega(x) e^{r|y|},\quad x,y\in \rr d
\end{equation}
for some $r>0$. In particular, \eqref{moderateconseq} shows that
for any $\omega \in \mascP_E(\rr d)$, there is a constant $r>0$ such that
$$
e^{-r|x|}\lesssim \omega (x)\lesssim e^{r|x|},\quad x\in \rr d.
$$

\par

We say that $v$ is
\emph{submultiplicative} if $v$ is even and
\eqref{moderate}
holds with $\omega =v$. In the sequel, $v$ and $v_j$ for
$j\ge 0$, always stand for submultiplicative weights if
nothing else is stated.

\par

We let $\mascP ^{0} _E(\rd)$ be the set of all $\omega\in \mascP _E(\rd)$
such that \eqref{Eq:weight0} holds for every $r>0$. We also let $\mascP (\rd)$
be the set of all $\omega\in \mascP _E(\rd)$ such that
$$
\omega (x+y) \lesssim \omega(x) (1+|y|)^r
$$
for some $r>0$.
Evidently,
$$
\mascP (\rd) \subseteq \mascP ^{0} _E(\rd) \subseteq \mascP _E(\rd).
$$

\par

\subsection{Pilipovi{\'c} Spaces}

\par

Some of our investigating later on are performed in the framework of the
Pilipovi{\'c} space $\maclH_\flat (\rd)$ and its dual $\maclH_\flat' (\rd)$.

\par

We recall from \cite{Toft18} that the Pilipovi{\'c} space $\maclH_\flat (\rd)
= \maclH_{\flat_1}(\rd)$ is the set of all Hermite series expansions
\begin{equation} \label{Eq:Hermite1}
f=\sum \limits _{\alpha \in \mathbf{N}^d} c_f(\alpha) h_\alpha
\end{equation}
such that
\begin{equation}\label{Eq:Hermite2}
    | c_f(\alpha)|\lesssim r^{|\alpha|} \alpha!^{-\frac 12}
\end{equation}
for some $r>0$. Here
$h_\alpha$ is the Hermite function of order $\alpha>0$ which is given by
$$
h_\alpha (x)= \pi^{-\frac d4} (-1)^{|\alpha|} (2^{|\alpha|} \alpha !)^{-\frac 12}
e^{\frac 12\cdot {|x|^2}} (\partial^\alpha e^{-|x|^2}),\quad \alpha \in \nn d.
$$
In the same way, $\maclH_\flat' (\rd)$ consists of all formal Hermite
series expansion \eqref{Eq:Hermite1} such that
\begin{equation}\label{Eq:dualHermite}
| c_f(\alpha)|\lesssim r^{|\alpha|}\alpha!^{\frac 12}
\end{equation}
for every $r>0$.
The topologies of $\maclH_\flat (\rd)$ and $\maclH_\flat' (\rd)$ are given by suitable
inductive limit respectively projective limit topologies with respect to $r$ in
\eqref{Eq:Hermite2} and \eqref{Eq:dualHermite}. (See \cite{Toft18} for details.)
By identifying elements in $\mathscr{S}(\rd),\ \maclS_s(\rd)$ and $\Sigma_s(\rd)$ we get
the dense embeddings
\begin{multline}
    \maclH_\flat (\rd) \hookrightarrow \maclS_{1/2}(\rd)\hookrightarrow
\Sigma_s(\rd) \hookrightarrow
 \maclS_s(\rd) \hookrightarrow
 \mathscr{S}(\rd)
 \\[1ex]
\hookrightarrow  \mathscr{S}'(\rd)
\hookrightarrow \maclS_s'(\rd) \hookrightarrow
 \Sigma_s'(\rd) \hookrightarrow \maclS_{1/2}'(\rd) \hookrightarrow \maclH_\flat' (\rd),
\quad
s> \frac{1}{2} .
\end{multline}

\par

We also have
\begin{equation}\label{Eq:innerpro}
    (f,g)_{L^2(\rd)} = \sum\limits_{\alpha \in \mathbf{N}^ d } c_f(\alpha) \overline{c_g(\alpha)},
\end{equation}
when $f,g\in L^2(\rd)$.
By letting the $L^2$-form $(f,g)_{L^2(\rd)}$ be equal to the right-hand
side of \eqref{Eq:innerpro} when $f\in \maclH_\flat (\rd)$ and $g\in \maclH'_\flat (\rd)$,
it follows that $\maclH'_\flat (\rd)$ is the dual of $\maclH_\flat (\rd)$ through a unique
extension of the $L^2$-form on $\maclH_\flat (\rd) \times \maclH_\flat (\rd)$ to
$\maclH_\flat (\rd) \times \maclH'_\flat (\rd)$ or $\maclH'_\flat (\rd) \times
\maclH_\flat (\rd)$.

\par

For future references we remark that if $\phi(x)= \pi^{-\frac d4}e^{-\frac 12\cdot {|x|^2}}$
and $f \in \maclH'_\flat (\rd)$, then the short-time Fourier transform \eqref{Eq:ShorttimeF}
makes sense as a smooth functions in view of (2.25) and Theorem 4.1 in \cite{Toft18}.

\par

\subsection{Quasi-Banach Spaces}

\par

We recall that a quasi-norm $\nm {\cdo}{\mascB}$ of order $r_0\in (0,1]$ on the
vector-space $\mascB$ over $\mathbf C$ is a nonnegative functional on
$\mascB$ which satisfies
\begin{alignat}{2}
 \nm {f+g}{\mascB} &\le 2^{\frac 1{r_0}-1}(\nm {f}{\mascB} + \nm {g}{\mascB}), &
\quad f,g &\in \mascB ,
\label{Eq:WeakTriangle1}
\\[1ex]
\nm {\alpha \cdot f}{\mascB} &= |\alpha| \cdot \nm f{\mascB},
& \quad \alpha &\in \mathbf{C},
\quad  f \in \mascB
\notag
\intertext{and}
   \nm f {\mascB} &= 0\quad  \Leftrightarrow \quad f=0. & &
\notag
\end{alignat}
The space $\mascB$ is then called a quasi-norm space. A complete
quasi-norm space is called a quasi-Banach space. If $\mascB$
is a quasi-Banach space with
quasi-norm satisfying \eqref{Eq:WeakTriangle1}
then by \cite{Aik,Rol} there is an equivalent quasi-norm to $\nm \cdo {\mascB}$
which additionally satisfies
\begin{align}\label{Eq:WeakTriangle2}
\nm {f+g}{\mascB}^{r_0} \le \nm {f}{\mascB}^{r_0} + \nm {g}{\mascB}^{r_0},
\quad f,g \in \mascB .
\end{align}
From now on we always assume that the quasi-norm of the quasi-Banach space $\mascB$
is chosen in such way that both \eqref{Eq:WeakTriangle1} and \eqref{Eq:WeakTriangle2}
hold. The space $\mascB$ is then also called an $r_0$-Banach space.

\par

\subsection{Orlicz Spaces}

\par

Next we define and recall some basic facts for (quasi-) Orlicz spaces.
(See \cite{Rao,HaH}.)
First we give the definition of Young functions and quasi-Young functions.

\par

\begin{defn}\label{convex f.}
A function $\Phi:\mathbf R \rightarrow
\mathbf R \cup \{ \infty\}$ is called \emph{convex} if
\begin{equation*}
\Phi(s_1 t_1+ s_2 t_2)
\leq s_1 \Phi(t_1)+s_2\Phi(t_2)
\end{equation*}
when
$s_j,t_j\in \mathbf{R}$
satisfy $s_j \ge 0$ and
$s_1 + s_2 = 1,\ j=1,2$.
\end{defn}
We observe that $\Phi$ might not
 be continuous, because we permit
 $\infty$ as function value. For example,
$$\Phi(t)=
\begin{cases}
  c,&\text{when}\ t \leq a
  \\[1ex]
   \infty,&\text{when}\ t>a
\end{cases}$$
is convex but discontinuous at $t=a$.

\par

\begin{defn}\label{Young func.}
Let $r_0\in(0,1]$, $\Phi _0$ and
$\Phi$ be functions from $[0,\infty)$ to $[0,\infty]$.
Then $\Phi _0$ is called a \emph{Young function} if
\begin{enumerate}
  \item $\Phi _0$ is convex,

  \vrum

  \item $\Phi _0(0)=0$,

  \vrum

  \item $\lim
\limits_{t\rightarrow\infty} \Phi _0(t)=+\infty$.
\end{enumerate}
The function $\Phi$ is called
\emph{$r_0$-Young function} or
\emph{quasi-Young function of
order $r_0$}, if $\Phi(t)=\Phi _0 (t^{r_0})$, $t \geq 0$,
for some Young function $\Phi _0$.
\end{defn}

\par

It is clear that $\Phi$ in
Definition \ref{Young func.} is
non-decreasing,
because if $0\leq t_1\leq t_2$
and $s\in [0,1]$ is chosen such
that $t_1=st_2$, then
\begin{equation*}
    \Phi(t_1)=\Phi(st_2+(1-s)0)
    \leq s\Phi(t_2)+(1-s)\Phi(0)
    \leq \Phi(t_2),
\end{equation*}
since $\Phi(0)=0$, $\Phi(t_2)\geq 0$ and $s\in [0,1]$.

\par

\begin{defn}
Let $\Omega \subseteq \rd$, $(\Omega,\Sigma,\mu)$ be a Borel measure
space, $\Phi _0$ be a
Young function and let $\omega_0 \in \mascP _E(\rr d)$.
\begin{enumerate}
    \item $L^{\Phi _0}_{(\omega_0)}(\mu)$ consists
    of all $\mu$-measurable functions
    $f:\Omega \rightarrow
    \mathbf C$ such that
    $$
    \Vert f\Vert_{L^{\Phi _0}_{(\omega_0)}}=\inf  \Sets{\lambda>0}{\int_\Omega \Phi _0
    \left (
    \frac{|f(x) \cdot \omega_0 (x)|}{\lambda}
    \right )
    d\mu (x)\leq 1}
    $$
is finite.
%
%

\vrum

\item Let $\Phi$ be a quasi-Young
function of order $r_0\in (0,1]$,
given by $\Phi(t)=\Phi _0(t^{r_0})$,
$t\geq 0$, for some Young function $\Phi _0$. Then $L^ \Phi _{(\omega_0)}(\mu)$ consists
of all $\mu$-measurable functions
$f:\Omega \rightarrow \mathbf C$ such that
$$
\Vert f\Vert_{L^{\Phi}_{(\omega_0)}}=(\Vert|f
\cdot \omega_0 |^{r_0}\Vert_{L^{\Phi _0}})^{1/r_0}
$$
is finite.
\end{enumerate}
\end{defn}

\par

\begin{rem}
Let $\Phi$, $\Phi _0$ and $\omega_0$ be the
same as in Definition \ref{Young func.}.
Then it follows by straight-forward computation that
$$
\Vert f\Vert_{L^\Phi _{(\omega_0)}}=\inf\Sets{\lambda>0}
{\int_\Omega \Phi _0
    \left (
\frac{|f(x) \cdot \omega_0 (x)|^{r_0}}{\lambda^{r_0}}
\right)
d\mu(x)\leq 1}.
$$
\end{rem}

\par

\begin{defn}\label{d1}
Let $(\Omega_j ,\Sigma_j ,\mu_j )$ be Borel
measure spaces, with $\Omega_j \subseteq \rr {d_j}$, $r_0\in (0,1]$, $\Phi _{j}$ be
$r_0$-Young functions, $j=1,2$ and let $\omega \in \mascP _E (\rr {d_1+d_2} )$. Then
the mixed quasi-norm Orlicz space ${L^{\Phi _1, \Phi _2}_{(\omega)}} = {L^{\Phi_1, \Phi _2}_{(\omega)}}(\mu_1 \otimes \mu_2)$ consists of all $\mu_1 \otimes
\mu_2$-measurable functions $f:\Omega_1 \times \Omega_2 \rightarrow
\mathbf C$ such that
$$
\Vert f\Vert_{L^{\Phi _1, \Phi _2}_{(\omega)}} \equiv
\Vert f_{1,\omega}\Vert_{L^{\Phi _2}},
$$
is finite, where
$$
f_{1,\omega}(x_2)=\Vert f(\cdo ,x_2) \omega(\cdo, x_2)\Vert_{L^{\Phi _{1}}}.
$$
\end{defn}

\par

If $r_0=1$ in Definition \ref{d1}, then $L^{\Phi _1, \Phi _2}_{(\omega)}
(\mu_1 \otimes \mu_2)$ is a Banach space and is called a mixed norm Orlicz space.

\par

\begin{rem}
Suppose $\Phi _j$ are quasi-Young
functions of order $q_j\in (0,1]$, $j=1,2$.
Then both $\Phi _{1}$ and $\Phi _{2}$
are quasi-Young functions of order
$r_0=\min(q_1,q_2)$.
\end{rem}

\par

\begin{rem}
Let $\omega$, $\mu _1$ and $\mu _2$ be as in Definition \ref{d1}.
For $p\in (0,\infty ]$, let $\Phi _p(t) = \frac {t^p}{p}$ when $p<\infty$, and set
$\Phi _\infty (t)=0$ when $0\le t\le 1$ and $\Phi _\infty (t)=\infty$
when $t>1$. Then it is well-known that
$L^{\Phi _p ,\Phi _q}_{(\omega )}=L^{p,q}_{(\omega )}$ with equality in
quasi-norms. Hence the family of quasi-Orlicz spaces contain the usual Lebesgue
spaces and mixed quasi-normed spaces of Lebesgue types.
\end{rem}

\par

\begin{rem}
Let $r_0$, $\Phi _j$, $\mu _j$ and $\omega$ be as in Definition \ref{d1}.
Then
\begin{equation}\label{Eq:OrliczNormTransfers}
\nm f{L^{\Phi _1, \Phi _2}_{(\omega )}}
=
\left (
\nm {|f|^{r_0}}{L^{\Phi _{0,1},\Phi _{0,2}}_{(\omega )}}
\right ) ^{\frac 1{r_0}}
\end{equation}
for some Young functions $\Phi _{0,1}$ and $\Phi _{0,2}$. It follows that
$L^{\Phi _1, \Phi _2}_{(\omega )}(\mu _1\otimes \mu _2)$ is an $r_0$-Banach space.

\par

In fact, the completeness of $L^{\Phi _1, \Phi _2}_{(\omega )}(\mu _1\otimes \mu _2)$
follows from
\eqref{Eq:OrliczNormTransfers} and the completeness of
$L^{\Phi _{0,1},\Phi _{0,2}}_{(\omega )}(\mu _1\otimes \mu _2)$. Furthermore by
\eqref{Eq:OrliczNormTransfers} and the fact that $r_0\in (0,1]$
we get for every $f,g\in
L^{\Phi _1, \Phi _2}_{(\omega )}(\mu _1\otimes \mu _2)$ that
\begin{multline*}
\nm {f+g}{L^{\Phi _1, \Phi _2}_{(\omega )}}^{r_0}
=
\nm {|f+g|^{r_0}}{L^{\Phi _{0,1},\Phi _{0,2}}_{(\omega )}}
\le
\nm {|f|^{r_0}+|g|^{r_0}}{L^{\Phi _{0,1},\Phi _{0,2}}_{(\omega )}}
\\[1ex]
=
\nm {|f|^{r_0}}{L^{\Phi _{0,1},\Phi _{0,2}}_{(\omega )}}
+
\nm {|g|^{r_0}}{L^{\Phi _{0,1},\Phi _{0,2}}_{(\omega )}}
=
\nm {f}{L^{\Phi _{1},\Phi _{2}}_{(\omega )}}^{r_0}
+
\nm {g}{L^{\Phi _{1},\Phi _{2}}_{(\omega )}}^{r_0},
\end{multline*}
which shows that $\nm \cdo{L^{\Phi _{1},\Phi _{2}}_{(\omega )}}$ is
a quasi-norm of order $r_0$, giving the assertion.
\end{rem}

\par

In what follows let $\ell_0'(\Lambda )$ be the set of all
formal sequences
$$
\{ a(n)\} _{n\in \Lambda} =
\sets{a(n)}{n\in \Lambda}\subseteq \mathbf C,
$$
and let $\ell_0 (\Lambda )$ be the set of all
sequences $\{ a(n)\} _{n\in \Lambda}$ such that $a(n)\neq 0$ for at
most finite numbers of $n$.

\par

\begin{rem}
Let $\Lambda \subseteq \rr d$ be a lattice, $\Phi, \Phi _1$ and $\Phi _2$
be $r_0$-Young functions, $\omega_0, v_0
\in \mascP_E (\rd)$ and  $\omega, v \in \mascP_E (\rdd )$ be such that $\omega
_0$ and $\omega$ are $v_0$- respectively $v$-moderate. (In the sequel it is
understood that all lattices contain $0$.) Then we set
$$
L^{\Phi}_{(\omega_0)}(\rd) = L^{\Phi}_{(\omega_0)}(\mu)
\quad
\text{and}
\quad
L^{\Phi _1, \Phi _2}_{(\omega_0)}(\rdd ) =
L^{\Phi _1,\Phi _2}_{(\omega_0)}(\mu \otimes \mu),
$$
when $\mu$ is the Lebesgue measure on $\rr d$.
If instead
$\mu$ is the standard (Haar) measure on $\Lambda$, i.e.
$\mu(n)=1,\ n\in \Lambda$, and
$$
\ell^{\Phi}_{(\omega)}(\Lambda ) = \ell^{\Phi}_{(\omega)}(\mu)
\quad
\text{and}
\quad
\ell^{\Phi _1, \Phi _2}_{(\omega)}(\Lambda \times \Lambda )
= \ell^{\Phi _1,\Phi _2}_{(\omega)}(\mu \otimes \mu).
$$
Evidently, $\ell^{\Phi _1, \Phi _2}
_{(\omega)}(\Lambda \times \Lambda )\subseteq
\ell _0'(\Lambda \times \Lambda )$.
\end{rem}

\par

\begin{lemma}\label{Lemma:T}
Let $\Phi, \Phi _j$ be Young functions, $j=1,2$, $\omega_0, v_0 \in \mascP_E (\rd)$
and $\omega, v \in \mascP_E (\rdd )$ be such that $\omega_0$ is $v_0$-moderate
and $\omega$ is $v$-moderate. Then $L^{\Phi}_{(\omega_0)}(\rd)$ and
$L^{\Phi _{1} \Phi _{2}}_{(\omega)}(\rr{2d})$ are
 invariant under translations, and
$$
\Vert f(\cdo - x)\Vert_{L^\Phi _{(\omega_0)}} \lesssim
\Vert f\Vert_{L^\Phi _{(\omega_0)}} v_0(x), \quad f\in L^\Phi _{(\omega_0)}(\rd),\ x\in \rd\;,
$$
and
$$
\Vert f(\cdo - (x,\xi))\Vert_{L^{\Phi _{1}, \Phi _{2}}_{(\omega)}}
\lesssim
\Vert f\Vert_{L^{\Phi _{1}, \Phi _{2}}_{(\omega)}}v(x,\xi),
\quad f\in L^{\Phi _{1}, \Phi _{2}}_{(\omega)} (\rdd ),\ (x,\xi ) \in \rdd .
$$
\end{lemma}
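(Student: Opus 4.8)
The plan is to argue directly from the Luxemburg definition of the Orlicz norms, using two elementary ingredients: the monotonicity of these norms (valid because the Young functions $\Phi$, $\Phi _1$, $\Phi _2$ are non-decreasing, as observed right after Definition \ref{Young func.}), and the translation invariance of Lebesgue measure. Since the two asserted estimates have the same shape, I would first treat the scalar space $L^\Phi _{(\omega_0)}(\rd)$ and then obtain the mixed-space bound by running that argument once in each of the two variables.

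For the scalar estimate I set $g = f(\cdo - x)$ and expand
$$
\Vert g\Vert_{L^\Phi _{(\omega_0)}} = \inf \Sets{\lambda > 0}{\int_\rd \Phi\left(\frac{|f(y-x)\omega_0(y)|}{\lambda}\right) dy \le 1}.
$$
After the substitution $y = z + x$ the weight becomes $\omega_0(z+x)$, and the $v_0$-moderateness \eqref{moderate} gives $\omega_0(z+x) \le C\,\omega_0(z) v_0(x)$ for a constant $C$. Since $\Phi$ is non-decreasing, replacing $\omega_0(z+x)$ by $C v_0(x)\omega_0(z)$ only enlarges the integrand, so a scaling argument — if $\mu$ is admissible for $f\omega_0$ then $\lambda = C v_0(x)\mu$ is admissible for $g\omega_0$ — yields $\Vert g\Vert_{L^\Phi _{(\omega_0)}} \le C v_0(x) \Vert f\Vert_{L^\Phi _{(\omega_0)}}$. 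In particular $f(\cdo - x)$ again lies in $L^\Phi _{(\omega_0)}(\rd)$, so the space is translation invariant.

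For the mixed space I write $g = f(\cdo - (x,\xi))$, that is $g(y_1,y_2) = f(y_1 - x, y_2 - \xi)$, and apply the scalar argument in the inner ($\Phi _1$) variable. The decisive rewriting is $(z_1 + x, y_2) = (z_1, y_2 - \xi) + (x,\xi)$, which lets the $v$-moderateness of $\omega$ on $\rdd$ convert $\omega(z_1 + x, y_2)$ into $C\,v(x,\xi)\,\omega(z_1, y_2 - \xi)$, recombining cleanly with $f(z_1, y_2 - \xi)$. This produces the pointwise bound $g_{1,\omega}(y_2) \le C\,v(x,\xi)\, f_{1,\omega}(y_2 - \xi)$ for the inner profiles of Definition \ref{d1}. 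Taking the outer $L^{\Phi _2}$-norm and using its homogeneity and monotonicity extracts the factor $C\,v(x,\xi)$ and leaves $\Vert f_{1,\omega}(\cdo - \xi)\Vert_{L^{\Phi _2}}$; as this outer norm carries no weight, translation invariance of Lebesgue measure makes it exactly $\Vert f_{1,\omega}\Vert_{L^{\Phi _2}} = \Vert f\Vert_{L^{\Phi _1,\Phi _2}_{(\omega)}}$, giving the second estimate and the corresponding invariance.

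I do not expect a genuine obstacle, as the whole argument reduces to a change of variables combined with monotonicity. The one point that must be handled carefully is the index bookkeeping in the mixed case: one has to match the translation vector $(x,\xi)$ appearing in the weight with the shift in the argument of $f$, so that after the substitution $\omega(\cdo, y_2 - \xi)$ pairs with $f(\cdo, y_2 - \xi)$ and no residual $y_2$-dependence is left over. Getting this alignment right is exactly what makes the constant emerge as a clean multiple of $v(x,\xi)$ and what allows the outer $\xi$-translation to be absorbed for free.
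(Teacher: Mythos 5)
Your proposal is correct and follows essentially the same route as the paper's proof: both rest on the pointwise bound $\omega(z+(x,\xi))\lesssim \omega(z)v(x,\xi)$ from $v$-moderateness, the monotonicity of the Luxemburg norm, and the translation invariance of the unweighted (mixed) Orlicz norm coming from the Lebesgue measure. The only difference is presentational — the paper first rewrites the quasi-norm via $|f\cdot\omega|^{r_0}$ and Young functions $\Phi_{0,j}$ (so its one-line computation also covers quasi-Young functions) and dismisses the unweighted translation invariance as following "from the definitions," whereas you verify that step explicitly, layer by layer, through the inner profile $f_{1,\omega}$; for the lemma as stated (Young functions) your direct argument is complete.
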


\par

We give a proof of the second statement in Lemma \ref{Lemma:T}.

\par

\begin{proof}
We have $\Phi _j(t) = \Phi _{0,j}(t^{r_0}),\ t\geq 0$, for some $r_0\in (0,1]$ and
Young functions $\Phi _{0,j},\ j=1,2$. This gives
\begin{multline*}
\nm {f(\cdo - (x,\xi)}{L^{\Phi _{1}, \Phi _{2}}_{(\omega)}}=
\left(
\nm{|f(\cdo - (x,\xi))\omega|^{r_0}}{L^{\Phi _{0,1}, \Phi _{0,2}}}
\right)^{\frac{1}{r_0}}
\\[1ex]
\lesssim
\left(
\nm{|f(\cdo - (x,\xi))\omega(\cdo - (x,\xi))v(x,\xi)|^{r_0}}{L^{\Phi _{0,1}, \Phi _{0,2}}}
\right)^{\frac{1}{r_0}}
\\[1ex]
=
\left(
\nm{|f \cdot \omega|^{r_0}}{L^{\Phi _{0,1}, \Phi _{0,2}}}
\right)^{\frac{1}{r_0}}
\cdot v(x,\xi)=
\nm{f}{L^{\Phi _{1}, \Phi _{2}}_{(\omega)}}\cdot
v(x,\xi).
\end{multline*}
Here the inequality follows from the fact that $\omega$ is $v$- moderate, and the last two
relations follow from the definitions.
\end{proof}

We refer to \cite{SchF,Rao,HaH} for more facts about Orlicz spaces.

\par

\subsection{Orlicz modulation spaces}\label{subsec1.3}

\par

Before considering Orlicz modulation spaces, we recall the definition of classical
modulation spaces. (Cf. \cite{F1,Fei5}.)

\par

\begin{defn}\label{Def:Orliczmod}
Let $\phi(x) = \pi ^{-\frac{d}{4}}e^{-\frac 12\cdot {|x|^2}},\ x\in \rd$, $p,q\in (0,\infty]$ and
$\omega $ be a weight on $\rdd $.
Then the \emph{modulation spaces} $M^{p,q}_{(\omega)}(\rd)$
is set of all $f\in \maclS _{1/2}'
(\rr d)$ such that $V_\phi f\in L^{p,q}_{(\omega)}(\rr {2d})$.
We equip these spaces with the quasi-norm
\begin{equation*}
\nm f{M^{p,q}_{(\omega)}} \equiv \nm {V_\phi f}{L^{p,q}_{(\omega)}}.
\end{equation*}
Also let $\Phi, \Phi _1,\Phi _2$ be quasi-Young functions. Then the \emph{Orlicz modulation spaces}
$M^{\Phi}_{(\omega)} (\rd)$ and  $M^{\Phi _{1}, \Phi _{2}}_{(\omega)}(\rd)$ are given by
\begin{equation}\label{Eq:Orliczmod1}
M^{\Phi}_{(\omega)}(\rd)=
\sets{f \in \maclH'_{\flat}(\rd)} {V_\phi f\in
L^{\Phi}_{(\omega)} (\rdd )}
\end{equation}
and
\begin{equation}\label{Eq:Orliczmod2}
M^{\Phi _{1}, \Phi _{2}}_{(\omega)}(\rd)=
\sets{f\in \maclH'_{\flat}(\rd)} { V_\phi f\in
L^{\Phi _{1} ,\Phi _{2}} _{(\omega)}(\rdd )}.
\end{equation}
The quasi-norms on $M^{\Phi}_{(\omega)}(\rd)$ and
$M^{\Phi _{1}, \Phi _{2}}_{(\omega)}(\rd)$ are given by
\begin{equation}\label{NE}
\Vert f\Vert_{M^{\Phi}_{(\omega)}} =\Vert V_\phi f\Vert_{L^{\Phi}_{(\omega)}}
\end{equation}
and
\begin{equation}\label{MNE}
\nm f {M^{\Phi _{1}, \Phi _{2}}_{(\omega)}}
=\nm {V_\phi f} {L^{\Phi _{1}, \Phi _{2}}_{(\omega)}}.
\end{equation}
\end{defn}

\par

For conveniency we set $M^{p,q}(\rd)=M^{p,q}_{(\omega)}(\rd)$ when
$\omega (x,\xi)=1$, and we set
$M^p=M^{p,p}$ and $M^p_{(\omega)}=M^{p,p}_{(\omega)}$.

 \par

We notice that \eqref{NE} and \eqref{MNE} are norms when $\Phi, \Phi _1$ and $\Phi _2$
are Young functions.
If $\omega \in \mascP_E(\rdd )$ as in Definition \ref{Def:Orliczmod},
then we prove later on that the conditions
$$
\nm {V_\phi f} {L^{\Phi _{1} \Phi _{2}}_{(\omega)}} <\infty
\quad \text{and}\quad
\nm {V_\phi f} {L^{\Phi}_{(\omega)}}<\infty
$$
are independent of the choices of $\phi$ in $\Sigma_1(\rd)\setminus{0}$ and that
different $\phi$ give rise to equivalent quasi-norms. (See Theorem
\ref{Thm:LebWindowTransf} in Section \ref{sec4}.)

\par

 Later on we need the following proposition.

 \par

 \begin{prop}\label{subsets}
Let $\Phi, \Phi _j$ be Young functions, $j=1,2$, $\omega_0 \in \mascP_E (\rd)$
and $\omega \in \mascP_E (\rdd )$. Then
$$
\mascS(\rd)\subseteq L^{\Phi} (\rd)\subseteq \mascS^{'}(\rd), \quad
\mascS(\rdd )\subseteq L^{\Phi _{1},\Phi _{2}}(\rdd )\subseteq \mascS^{'}(\rdd ),
$$
$$
\Sigma_1 (\rd) \subseteq L^{\Phi}_{(\omega_0)} (\rd)
\subseteq \Sigma_1 ' (\rd), \quad \Sigma_1 (\rdd ) \subseteq L^{\Phi _{1},\Phi _{2}}_{(\omega)}(\rdd )
\subseteq \Sigma_1 ' (\rdd ).
$$
\end{prop}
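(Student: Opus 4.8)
The plan is to establish each two-sided inclusion by \emph{sandwiching} the Orlicz space between ordinary (mixed) Lebesgue spaces, and then to invoke the elementary embeddings $\mascS\subseteq L^1\cap L^\infty$ and $L^1+L^\infty\subseteq\mascS'$. Since the hypotheses concern genuine Young functions we have $r_0=1$ throughout, so all norms are ordinary norms and no quasi-norm machinery is needed. I first reduce the two weighted chains to the unweighted ones: for a Young function one has $\nm f{L^\Phi_{(\omega_0)}}=\nm{f\omega_0}{L^\Phi}$ (and likewise in the mixed case), and every $\omega_0\in\mascP_E(\rd)$ satisfies $e^{-r_0|x|}\lesssim\omega_0(x)\lesssim e^{r_0|x|}$ for some $r_0>0$; thus multiplication by $\omega_0^{\pm1}$ trades the exponential decay/growth built into $\Sigma_1(\rd)$ and $\Sigma_1'(\rd)$ for that of $\mascS$ and $\mascS'$. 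The only structural facts about $\Phi$ I will use are that it is finite near the origin, so that convexity with $\Phi(0)=0$ gives $\Phi(t)\le Ct$ for $0\le t\le a$ (some $a,C>0$), and that $\Phi(t)/t$ is nondecreasing, so that either $\Phi(t)\ge ct$ for all large $t$ or $\Phi$ jumps to $+\infty$ beyond some point (the $L^\infty$-type case).

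These two bounds yield, in one variable, the sandwich
\[
L^1(\rd)\cap L^\infty(\rd)\ \subseteq\ L^\Phi(\rd)\ \subseteq\ L^1(\rd)+L^\infty(\rd).
\]
For the left inclusion, if $g\in L^1\cap L^\infty$ and $\lambda\ge\nm g{L^\infty}/a$, then $|g|/\lambda\le a$ pointwise, so $\int\Phi(|g|/\lambda)\le (C/(a\lambda))\nm g{L^1}\le1$ for $\lambda$ large, whence $\nm g{L^\Phi}<\infty$. For the right inclusion, choose $\lambda$ with $\int\Phi(|f|/\lambda)\le1$ and split $f=f\mathbf 1_{\{|f|\le\lambda t_0\}}+f\mathbf 1_{\{|f|>\lambda t_0\}}$: the first term lies in $L^\infty$, while on the second set $\Phi(|f|/\lambda)\ge c|f|/\lambda$ gives an $L^1$ bound (in the degenerate case the second set is null and $f\in L^\infty$). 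Since $\mascS\subseteq L^1\cap L^\infty$ and both $L^1$ and $L^\infty$ embed continuously in $\mascS'$, the unweighted scalar chain $\mascS(\rd)\subseteq L^\Phi(\rd)\subseteq\mascS'(\rd)$ follows.

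For the mixed case the two inclusions are handled separately. The left inclusion $\mascS(\rdd)\subseteq L^{\Phi _1,\Phi _2}(\rdd)$ follows by iterating the scalar estimate: for $\varphi\in\mascS(\rdd)$ the inner norm $x_2\mapsto\nm{\varphi(\cdot,x_2)}{L^{\Phi _1}}$ is dominated by $\nm{\varphi(\cdot,x_2)}{L^1_{x_1}}+\nm{\varphi(\cdot,x_2)}{L^\infty_{x_1}}$, which is a rapidly decreasing function of $x_2$ and hence lies in $L^{\Phi _2}$. For the right inclusion $L^{\Phi _1,\Phi _2}(\rdd)\subseteq\mascS'(\rdd)$ I pass to the complementary Young functions $\Psi_1,\Psi_2$ and apply the mixed Orlicz--H\"older inequality (first in $x_1$, then in $x_2$; see \cite{Rao,HaH}),
\[
\left|\intrd\intrd f\,\varphi\,dx_1\,dx_2\right|\ \lesssim\ \nm f{L^{\Phi _1,\Phi _2}}\,\nm\varphi{L^{\Psi_1,\Psi_2}},\qquad\varphi\in\mascS(\rdd).
\]
By the left inclusion already proved for the pair $(\Psi_1,\Psi_2)$, the factor $\nm\varphi{L^{\Psi_1,\Psi_2}}$ is finite and, being controlled through the $L^1,L^\infty$ estimates, is dominated by $\mascS$-seminorms; hence $f$ defines a tempered distribution.

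Finally the weighted chains. For $\Sigma_1(\rd)\subseteq L^\Phi_{(\omega_0)}(\rd)$: if $f\in\Sigma_1$ then $|f(x)|\lesssim e^{-r|x|}$ for \emph{every} $r>0$, so taking $r>r_0$ makes $f\omega_0$ exponentially decaying, in particular $f\omega_0\in L^1\cap L^\infty\subseteq L^\Phi$, i.e. $f\in L^\Phi_{(\omega_0)}(\rd)$. For $L^\Phi_{(\omega_0)}(\rd)\subseteq\Sigma_1'(\rd)$: given $\varphi\in\Sigma_1$, the function $\varphi/\omega_0$ is again exponentially decaying, hence lies in the complementary space $L^{\Psi}$ with norm controlled by a $\Sigma_1$-seminorm of $\varphi$, and H\"older's inequality shows that $\varphi\mapsto\int f\varphi=\int(f\omega_0)(\varphi/\omega_0)$ is continuous on $\Sigma_1$. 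The mixed weighted chains $\Sigma_1(\rdd)\subseteq L^{\Phi _1,\Phi _2}_{(\omega)}(\rdd)\subseteq\Sigma_1'(\rdd)$ then follow by combining this weight bookkeeping with the mixed-norm arguments of the previous paragraph. The step requiring the most care is in each case the \emph{right} inclusion: one must match the exponential growth permitted by $\omega$ (or the merely polynomial behaviour when $\omega=1$) against the decay built into $\Sigma_1$ (resp. $\mascS$), so that the complementary Orlicz norm of the test function is simultaneously finite and continuous. This matching is precisely what forces the choice of test-function space, and it is where the mixed-norm H\"older estimate does the essential work.
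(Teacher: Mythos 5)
Your proof is correct, but it takes a genuinely different route from the paper's. The paper's proof is soft and short: it observes that $L^{\Phi}_{(\omega_0)}(\rd)$ and $L^{\Phi_1,\Phi_2}_{(\omega)}(\rdd)$ are translation- and modulation-invariant Banach function spaces (translation via Lemma \ref{Lemma:T}; modulation costs nothing since the Orlicz norm depends only on $|f|$), invokes \cite[Theorem 12.1.8]{Gc2} to obtain the sandwich $M^1_{(v_0)}(\rd)\subseteq L^{\Phi}_{(\omega_0)}(\rd)\subseteq M^{\infty}_{(1/v_0)}(\rd)$ (similarly in the mixed case), and then concludes from the known embeddings of $\Sigma_1$, $\mascS$ and their duals relative to weighted modulation spaces. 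You instead do everything by hand: the sandwich $L^1\cap L^\infty\subseteq L^\Phi\subseteq L^1+L^\infty$ from convexity of $\Phi$ (this is in substance the $r_0=1$ case of the paper's Lemma \ref{Lemma:Embeddings}, which the paper proves later, following \cite{SchF}, for other purposes), iteration of the scalar estimate plus complementary-function H\"older duality \cite{Rao,HaH} in the mixed case, and exponential bookkeeping ($e^{-r|x|}\lesssim\omega_0(x)\lesssim e^{r|x|}$ against the superexponential decay of $\Sigma_1$-functions) for the weights; one routine point you leave unsaid is that the complementary functions $\Psi_j$ are again nondegenerate Young functions, which follows from $\Phi_j(t)\gtrsim t$ for large $t$ (or the jump to $\infty$) and from $\Phi_j$ being finite near $0$. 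What each approach buys: the paper's argument is a few lines and uniform over all admissible $\Phi,\Phi_j,\omega$, at the cost of importing Gr\"ochenig's sandwich theorem and the modulation-space characterizations of $\Sigma_1$ and $\mascS$; yours is elementary and self-contained, makes the continuity of all the inclusions explicit through seminorm domination (which the statement leaves implicit), and isolates the one structural hypothesis actually used, namely that the Young functions are finite near the origin. That last point is worth retaining: Definition \ref{Young func.} as literally stated admits $\Phi\equiv\infty$ on $(0,\infty)$, for which $L^\Phi=\{0\}$ and the proposition fails, so the nondegeneracy you flag is implicitly assumed by the paper as well --- its own proof needs it too, since $M^1_{(v_0)}(\rd)\subseteq L^{\Phi}_{(\omega_0)}(\rd)$ likewise fails in the degenerate case.
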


\par

\begin{proof}
Let $v_0 \in \mascP _E(\rd)$ and $v\in \mascP _E(\rr {2d})$ be chosen
such that $\omega_0$ is $v_0$-moderate and $\omega$ is $v$-moderate.
Since $L^{\Phi}_{(\omega_0)}(\rd)$ and $L^{\Phi _{1},\Phi _{2}}_{(\omega)}(\rdd )$
are invariant under translation and modulation, we have
$$
M^1_{(v_0)}(\rd)\subseteq
L^{\Phi}_{(\omega_0)}(\rd)\subseteq M^{\infty}_{(1/v_0)}(\rd),
$$
and
$$
M^1_{(v)}(\rd)\subseteq  L^{\Phi _{1},\Phi _{2}}_{(\omega)}(\rdd )
\subseteq M^{\infty}_{(1/v)}(\rd ).
$$
(see \cite[Theorem 12.1.8]{Gc2}). The result now follows from well-known
inclusions between modulation spaces, Schwartz spaces, Gelfand-Shilov
spaces, and their duals.
\end{proof}

\par

\subsection{Gabor frames}

\par

Let $E = \{ e_1,\dots ,e_d\}$ be an ordered basis of $\rr d$. Then the dual
dual basis $E'$ of $E$ is given by $E'=\{ \ep _1,\dots ,\ep _d\}$,
where $\scal {e_j}{\ep _k}= 2\pi \delta _{j,k}$.
The corresponding lattice $\Lambda =\Lambda _E$ and dual lattice $\Lambda '
=\Lambda _E'=\Lambda _{E'}$ are given by
\begin{align}
\Lambda = \Lambda _E
&=
\sets {n_1e_1+\cdots +n_de_d}{(n_1,\dots ,n_d)\in \zz d}
\label{Eq:LatticeDef}
\intertext{and}
\Lambda _E'
&=
\sets {\nu _1\ep _1+\cdots +\nu _d\ep _d}{(\nu _1,\dots ,\nu _d)\in \zz d}
\notag
\end{align}
respectively.

\par

\begin{defn}
Let $\omega, v \in \mascP_E (\rdd )$ be such that $\omega$ is
$v$-moderate, $\phi,\psi\in M^1_
{(v)}(\rd)$, $\ep >0$ and let $\Lambda \subseteq \rd$ be as in \eqref{Eq:LatticeDef}
with dual lattice $\Lambda' \subseteq \rd$.
\begin{enumerate}
    \item  The \emph{analysis operator} $C_{\phi}^{\ep , \Lambda}$
    is the operator from $M^\infty_{(\omega)}(\rd)$ to $\ell ^\infty _{(\omega)}
    (\ep \Lambda )\times (\ep \Lambda')$, given by
$$
C_{\phi}^{\ep , \Lambda} f
\equiv
\{ V_\phi f( j,\iota)\} _{(j,\iota)\in (\ep \Lambda )\times (\ep \Lambda')}.
$$
\item  The \emph{synthesis operator} $D_
\psi^{\ep , \Lambda}$ is the operator from $\ell^\infty_{(\omega)}((\ep \Lambda )
\times (\ep \Lambda'))$
to $M^\infty_{(\omega)}(\rd)$, given by
$$
D_{\psi}^{\ep , \Lambda} c
\equiv
\sum_{j\in \ep \Lambda}
\sum_{\iota \in \ep \Lambda'}
c(j,\iota)e^{i \scal \cdo \iota}\psi (\cdo- j).
$$
\item  The \emph{Gabor frame operator} $S_{\phi,\psi}^{\ep , \Lambda}$
is the operator on $M^\infty_{(\omega)}(\rd)$, given by
$D_{\psi}^{\ep , \Lambda} \circ C_{\phi}^{\ep , \Lambda},$ i.e.
$$
S_{\phi,\psi}^{\ep , \Lambda}
f \equiv
\sum_{j\in \ep \Lambda}
\sum_{\iota \in \ep \Lambda'}
V_\phi f(j,\iota)
e^{i\scal \cdo \iota}\psi (\cdo- j).
$$
\end{enumerate}
\end{defn}

\par

We set $C_{\phi}^{\ep }=C_{\phi}^{\ep _1, \Lambda} $,
$D_{\psi}^{\ep }=D_{\psi}^{\ep _1, \Lambda}$ and
$S_{\phi,\psi}^{\ep} = S_{\phi,\psi}^{\ep _1, \Lambda}$
when
$\Lambda=(2\pi)^\frac 12 \zd$ and $\ep _1=(2\pi)^{-\frac 12}\ep $.
It follows that
\begin{align*}
C_\phi ^{\ep } f
&=
\{V_\phi f(j,\iota)\}_{j,\iota \in \ep \zd}
\intertext{and}
D_\psi^{\ep }c
&=
\sum\limits_{j,\iota \in \ep \zd}c(j,\iota)
e^{i\scal \cdo \iota}\psi(\cdo-j).
\end{align*}

\par

The next result shows that it is possible to find suitable $\phi$ and
$\psi$ in the previous definition.

\par

\begin{lemma}\label{Lemma:GoodFrames0}
Let $\Lambda \subseteq \rd$ be as in \eqref{Eq:LatticeDef}
with dual lattice $\Lambda' \subseteq \rd$,
$v\in \mascP _E(\rr {2d})$ and $\phi \in M^1_{(v)}(\rr d) \setminus{0}$.
Then there is an $\ep >0$ and $\psi \in M^1_{(v)}(\rr d) \setminus{0}$ such that
\begin{equation}\label{Eq:GoodFrames0}
 \{ \phi (x-j)e^{i\scal x\iota } \} _{(j,\iota )\in \ep (\Lambda \times \Lambda ')}
\quad \text{and}\quad
 \{ \psi (x-j)e^{i\scal x\iota } \} _{(j,\iota )\in \ep (\Lambda \times \Lambda ')}
\end{equation}
are dual frames to each others.
\end{lemma}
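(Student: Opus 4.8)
The plan is to reduce the existence of dual Gabor frames for a general window $\phi\in M^1_{(v)}(\rd)\setminus\{0\}$ to the classical $L^2$ Gabor frame theory, and then transfer the regularity of the canonical dual window back into $M^1_{(v)}(\rd)$. Since $M^1_{(v)}(\rd)\subseteq M^1(\rd)=M^1_{(1)}(\rd)\subseteq L^2(\rd)$, the window $\phi$ is in particular a nonzero element of the Feichtinger algebra $M^1(\rd)$, so I may invoke the well-developed frame theory for modulation spaces as in \cite{Gc2}.

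First I would recall the density argument from classical Gabor analysis: for a fixed nonzero $\phi\in M^1_{(v)}(\rd)$ and a lattice of the form $\ep(\Lambda\times\Lambda')$, there is a threshold on the lattice spacing below which the system $\{\phi(\cdo-j)e^{i\scal\cdo\iota}\}_{(j,\iota)\in\ep(\Lambda\times\Lambda')}$ forms a Gabor frame for $L^2(\rd)$. Concretely, I would choose $\ep>0$ small enough that the Walnut/Janssen criterion (or the Wexler-Raz biorthogonality together with the density condition $\ep^{2d}<1$ for the normalized lattice) guarantees that the frame operator
$$
S_{\phi,\phi}^{\ep,\Lambda}f=\sum_{j\in\ep\Lambda}\sum_{\iota\in\ep\Lambda'}V_\phi f(j,\iota)\,e^{i\scal\cdo\iota}\phi(\cdo-j)
$$
is bounded and boundedly invertible on $L^2(\rd)$. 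At this stage the existence of an $\ep>0$ making the family a frame is standard; the candidate dual window is then the canonical dual $\psi=(S_{\phi,\phi}^{\ep,\Lambda})^{-1}\phi$, which by construction satisfies the dual frame relations in \eqref{Eq:GoodFrames0} on $L^2(\rd)$.

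The main obstacle, and the step requiring the weighted theory rather than a mere $L^2$ statement, is to show that this canonical dual window $\psi$ again lies in $M^1_{(v)}(\rd)$ and is nonzero. Nonvanishing is immediate since $\psi=0$ would force $\phi=S_{\phi,\phi}^{\ep,\Lambda}\psi=0$. For the membership $\psi\in M^1_{(v)}(\rd)$ I would appeal to the fundamental invertibility result of Gröchenig and Leinert (cf.\ the treatment in \cite{Gc2}): when $\phi\in M^1_{(v)}(\rd)$ and the system is a frame, the Gabor frame operator $S_{\phi,\phi}^{\ep,\Lambda}$ is invertible on $M^1_{(v)}(\rd)$ as well, because its off-diagonal decay places its associated matrix in a suitable Wiener-type algebra that is inverse-closed; hence $(S_{\phi,\phi}^{\ep,\Lambda})^{-1}$ maps $M^1_{(v)}(\rd)$ into itself. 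Applying this inverse to $\phi\in M^1_{(v)}(\rd)$ yields $\psi\in M^1_{(v)}(\rd)$.

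Finally I would record that with this choice of $\ep$ and $\psi$, both systems in \eqref{Eq:GoodFrames0} are frames for $L^2(\rd)$ with windows in $M^1_{(v)}(\rd)$, and the canonical duality relation $S_{\phi,\phi}^{\ep,\Lambda}\psi=\phi$ is exactly the statement that they are dual frames to each other, giving the Gabor reproducing identity for all $f\in L^2(\rd)$. I expect the delicate point to be the verification that the relevant operator algebra is inverse-closed in the weighted, submultiplicative setting; once that invertibility transfer is in place, the remaining assertions are immediate consequences of the classical frame theory combined with $\phi\in M^1_{(v)}(\rd)$.
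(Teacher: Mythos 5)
Your reduction to $L^2$ frame theory and the identification $\psi=(S_{\phi ,\phi}^{\ep ,\Lambda})^{-1}\phi$ are the right skeleton, but the step that transfers invertibility of the frame operator to $M^1_{(v)}(\rd)$ via inverse-closedness (Gr{\"o}chenig--Leinert) has a genuine gap in this setting. That spectral invariance theorem requires the submultiplicative weight $v$ to satisfy the Gelfand--Raikov--Shilov condition $\lim _{n\to \infty}v(nz)^{1/n}=1$, i.e. subexponential growth; without it the relevant Wiener-type matrix algebra is \emph{not} inverse-closed in $\maclB (\ell ^2)$. Here $v\in \mascP _E(\rr {2d})$ is explicitly allowed to grow exponentially --- the paper itself notes that every moderate weight is $e^{r|\cdo |}$-moderate, and $v(z)=e^{r|z|}$ is the typical case --- so for the weights admitted by the lemma one cannot conclude from the mere $L^2$-frame property that $(S_{\phi ,\phi}^{\ep ,\Lambda})^{-1}$ preserves $M^1_{(v)}(\rd)$. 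Your argument fixes $\ep$ only so as to obtain an $L^2$ frame and then tries to upgrade the canonical dual window for free; in the exponential-weight regime that upgrade is exactly what is unavailable, and you yourself flag this as ``the delicate point'' without resolving it.

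The repair is to let the choice of $\ep$ do double duty, which is how the sources the paper relies on proceed (\cite[Theorem S]{Gc1}, the analysis in \cite[Chapter 13]{Gc2}, and \cite[Proposition 1.5 and Remark 1.6]{Toft16}, the latter being what the paper cites in Remark \ref{Remark:GoodFrames0} in lieu of a proof): for $\phi \in M^1_{(v)}(\rd)\setminus 0$, a Walnut/Janssen-type representation shows that as $\ep \to 0$ the operator $S_{\phi ,\phi}^{\ep ,\Lambda}$ becomes a small perturbation of a positive multiple of the identity in the operator norm on $M^1_{(v)}(\rd)$ itself, since the off-diagonal contributions are controlled by weighted $\ell ^1$-tails of samples of $V_\phi \phi$, which tend to zero. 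A Neumann series then yields invertibility of $S_{\phi ,\phi}^{\ep ,\Lambda}$ simultaneously on $L^2(\rd)$ and on $M^1_{(v)}(\rd)$ for all sufficiently small $\ep$, for \emph{every} submultiplicative $v$, with no GRS hypothesis; with that substitution your remaining steps (nonvanishing of $\psi$ and the dual frame relations) go through unchanged. A secondary inaccuracy worth fixing: the density condition alone (e.g. $\ep ^{2d}<1$ after normalization) is necessary but not sufficient for the frame property, so it is the Walnut estimate for $M^1$-windows, not Wexler--Raz biorthogonality plus density, that produces the frame in the first place.
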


\par

\begin{rem}\label{Remark:GoodFrames0}
There are several ways to achieve dual frames \eqref{Eq:GoodFrames0}.
In fact, let $v, v_0\in \mascP_E(\rdd )$ be submultiplicative such that $\omega$ is
$v$-moderate and $L^1_{(v_0)}(\rdd )\subseteq L^r(\rdd ),\ r\in(0,1]$. Then Lemma
\ref{Lemma:GoodFrames0} guarantees that for some choice of $\phi, \psi
\in M^1_{(v_0 v)}(\rd)\subseteq M^r_{(v)}(\rd)$ and lattice $\Lambda$
, the set in \eqref{Eq:GoodFrames0} are dual frames to each other, and that
$\psi = (S^\Lambda_{\phi,\phi})^{-1}\phi$. (Cf.
\cite[Proposition 1.5 and Remark 1.6]{Toft16}.)
\end{rem}

\par

\begin{lemma}\label{Lemma:GoodFrames}
Let $\Lambda \subseteq \rd$ be as in \eqref{Eq:LatticeDef}
with dual lattice $\Lambda' \subseteq \rd$,
$\phi _1,\phi _2 \in \Sigma_1(\rd)\setminus{0}$ and
\begin{equation*}
\varphi (x,\xi) = \phi _1(x)\overline{\widehat{\phi}_{2}(\xi)}e^{-i\scal {x}{\xi}}.
\end{equation*}
Then there is an $\ep >0$ such that
\begin{equation*}
\{\varphi(x-j,\xi-\iota)e^{i(\scal x \kappa+ \scal k \xi)} \} _{(j,\iota ),(k,\kappa )
\in \ep (\Lambda \times \Lambda ')}
\end{equation*}
is a Gabor frame with canonical dual frame
\begin{equation*}
\{\psi(x-j,\xi-\iota)e^{i(\scal x \kappa + \scal k \xi)}\} _{(j,\iota ),(k,\kappa )
\in \ep (\Lambda \times \Lambda ')}
\end{equation*}
where $\psi = (S_{\varphi,\varphi}^{\Lambda^2\times\Lambda^2})^{-1} \varphi$
belongs to $M_{(v)}^r(\rr d)$ for every $r>0$.
\end{lemma}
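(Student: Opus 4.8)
The plan is to read the displayed system as an ordinary Gabor frame for $L^2(\rdd )$ with the \emph{single} window $\varphi$ on $\rr {2d}$, and then to reduce everything to Lemma \ref{Lemma:GoodFrames0} together with Remark \ref{Remark:GoodFrames0} applied in dimension $2d$. First I identify the lattice and its dual. Here $\varphi$ is translated over $\Lambda \times \Lambda '\subseteq \rdd$ and modulated at the frequencies $(\kappa ,k)$ with $\kappa \in \ep \Lambda '$ and $k\in \ep \Lambda$. A direct computation with the pairing on $\rdd$, using $\scal {e_j}{\ep _k}=2\pi \delta _{j,k}$ so that $\Lambda '$ is dual to $\Lambda$ and $\Lambda$ is dual to $\Lambda '$, shows that the dual lattice of $\Lambda \times \Lambda '$ is exactly $\Lambda '\times \Lambda$. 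Hence the system in the statement is precisely the Gabor system with window $\varphi$, time-shifts in $\ep (\Lambda \times \Lambda ')$ and frequency-shifts in the dual lattice $\ep (\Lambda '\times \Lambda )$.

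The crucial preliminary is that $\varphi$ is an admissible window, i.e. $\varphi \in \Sigma _1(\rdd )$. Writing $\varphi =F\cdot c$ with $F(x,\xi )=\phi _1(x)\overline {\widehat{\phi}_2(\xi )}$ and chirp $c(x,\xi )=e^{-i\scal x\xi }$, the factor $F$ is a tensor product of functions in $\Sigma _1(\rd)$ (recall $\Sigma _1$ is preserved by the Fourier transform and by complex conjugation), so $F\in \Sigma _1(\rdd )$. Multiplication by $c$ is, up to a constant, the metaplectic operator attached to a symplectic shear, and such operators leave $\Sigma _1(\rdd )$ invariant; equivalently, $\varphi$ is a constant multiple of the cross-Rihaczek distribution of $\phi _1,\phi _2$, and the cross-Rihaczek distribution of two functions in $\Sigma _1(\rd)$ lies in $\Sigma _1(\rdd )$. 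The pointwise bound $|\varphi (x,\xi )|=|\phi _1(x)|\,|\widehat{\phi}_2(\xi )|\lesssim e^{-r(|x|+|\xi |)}$ for every $r>0$ is immediate; the matching decay of $\widehat \varphi$ is exactly what the metaplectic invariance supplies, and this is the step I expect to require the most care.

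Once $\varphi \in \Sigma _1(\rdd )$ is in hand, the rest is an application of the general theory. Since $\Sigma _1(\rdd )\hookrightarrow M^1_{(w)}(\rdd )$ for every $w\in \mascP _E$, we may in particular choose $v_0$ as in Remark \ref{Remark:GoodFrames0}, so that $L^1_{(v_0)}\subseteq L^r$ and $\varphi \in M^1_{(v_0v)}(\rdd )$. Applying Lemma \ref{Lemma:GoodFrames0} and Remark \ref{Remark:GoodFrames0} in dimension $2d$ with window $\varphi$ and (the given shape) lattice $\Lambda \times \Lambda '$ produces an $\ep >0$ for which the system is a Gabor frame. Because the frame operator commutes with the time-frequency shifts of the lattice, its canonical dual is again a Gabor system, generated by $\psi =(S_{\varphi ,\varphi }^{\Lambda ^2\times \Lambda ^2})^{-1}\varphi$, and Remark \ref{Remark:GoodFrames0} places this $\psi$ in the same space $M^1_{(v_0v)}(\rdd )\subseteq M^r_{(v)}(\rdd )$ as $\varphi$.

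It remains to upgrade the membership of $\psi$ from one value of $r$ to \emph{every} $r>0$ while keeping a single $\ep$. Fixing $\ep$ so that the system is a frame, I note that $S_{\varphi ,\varphi }$ has a Gabor matrix inheriting the super-exponential off-diagonal decay of $\varphi \in \Sigma _1(\rdd )$; by the inverse-closedness results for quasi-Banach modulation spaces in \cite{Toft16}, $(S_{\varphi ,\varphi })^{-1}$ is then bounded on every $M^r_{(v)}(\rdd )$ simultaneously. Since $\varphi$ lies in all of these, $\psi =(S_{\varphi ,\varphi })^{-1}\varphi \in M^r_{(v)}(\rdd )$ for every $r>0$ and every submultiplicative $v$, which is the assertion. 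The only genuine obstacle is the $\Sigma _1$-membership of $\varphi$ past the chirp factor in the second paragraph; the remaining steps are lattice bookkeeping and direct appeals to Lemma \ref{Lemma:GoodFrames0}, Remark \ref{Remark:GoodFrames0} and \cite{Toft16}.
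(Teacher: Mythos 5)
Your proposal is correct, and it follows precisely the route the paper intends: the paper states Lemma \ref{Lemma:GoodFrames} without proof, with Lemma \ref{Lemma:GoodFrames0}, Remark \ref{Remark:GoodFrames0} and the citations to \cite{Gc1,Toft16} serving as the implicit justification, applied in dimension $2d$ to the window $\varphi$ over the lattice $\Lambda \times \Lambda '$ with dual $\Lambda '\times \Lambda$, exactly as you set it up. Your two substantive additions --- that $\varphi$, being a multiple of the cross-Rihaczek distribution of $\phi _1,\phi _2$, lies in $\Sigma _1(\rr {2d})$ by metaplectic (shear) invariance, and that $\psi \in M^r_{(v)}(\rr {2d})$ for \emph{every} $r>0$ with a single fixed $\ep$ follows from spectral invariance of the frame operator (equivalently, by running the Theorem S/Gr{\"o}chenig--Leinert mechanism in $M^1_{(v_0v)}$ for each auxiliary weight $v_0$ with $L^1_{(v_0)}\subseteq L^r$, the frame property being an $r$-independent $L^2$ statement) --- are exactly the points the paper leaves tacit, and both are sound.
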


\par

The next result gives some information about the roles that $\Phi _1$ and
$\Phi _2$ play for $M^{\Phi _{1},\Phi _{2}}$ in the Banach space case. We
omit the proof since it can be found in \cite{SchF}.

\par

\begin{prop}\label{Prop:ModPhiPsi}
Let $\Lambda \subseteq \rr d$ be a lattice given by
\eqref{Eq:LatticeDef}, $\Phi _{j},\Psi _{j}$ be Young functions, $j=1,2$.
Then the following conditions are equivalent:
\begin{enumerate}
    \item $M^{\Phi _{1},\Phi _{2}}(\rd)\subseteq M^{\Psi _{1}, \Psi _{2}}(\rd)$;

    \vrum

    \item $\ell^{\Phi _{1},\Phi _{2}}(\Lambda \times \Lambda )
    \subseteq \ell^{\Psi _{1}, \Psi _{2}}(\Lambda \times \Lambda )$;

    \vrum

    \item there is a constant $t_0 >0$ such that $\Psi _{j} (t)\lesssim  \Phi _{j} (t)$
    for all $0\leq t\leq t_0$.
\end{enumerate}
\end{prop}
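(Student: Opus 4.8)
The plan is to prove the cycle of implications $(1)\Rightarrow(3)\Rightarrow(2)\Rightarrow(1)$. I route the argument through $(3)$ rather than attempting $(1)\Rightarrow(2)$ directly, because the Gabor machinery identifies $M^{\Phi _1,\Phi _2}(\rd)$ only with the \emph{range} of the frame projection $P=C_\phi \circ D_\psi$ inside $\ell ^{\Phi _1,\Phi _2}$; since $P$ is a nontrivial idempotent, the function-space inclusion in $(1)$ yields only $P\ell ^{\Phi _1,\Phi _2}\subseteq P\ell ^{\Psi _1,\Psi _2}$, which is strictly weaker than $(2)$. The pointwise condition $(3)$ on the generating Young functions near the origin is the natural bridge between the two. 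Since the modulation spaces in $(1)$ are Banach (the $\Phi _j,\Psi _j$ being Young functions), the inclusion there is automatically continuous by the closed graph theorem, a fact I use when extracting quantitative estimates in the step $(1)\Rightarrow(3)$.

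For $(2)\Rightarrow(1)$ I would use the Gabor frame characterization of Banach Orlicz modulation spaces. By Lemma \ref{Lemma:GoodFrames0} together with the analysis and synthesis continuity established in Section \ref{sec4} (cf. \cite{SchF}), I fix dual windows $\phi ,\psi$ and a lattice density $\ep$ so that $C_\phi$ maps $M^{\Phi _1,\Phi _2}(\rd)$ boundedly into $\ell ^{\Phi _1,\Phi _2}(\ep (\Lambda \times \Lambda '))$, the synthesis operator $D_\psi$ maps $\ell ^{\Psi _1,\Psi _2}(\ep (\Lambda \times \Lambda '))$ boundedly into $M^{\Psi _1,\Psi _2}(\rd)$, and $D_\psi \circ C_\phi =\mathrm{Id}$ on $M^{\Phi _1,\Phi _2}(\rd)$. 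Since the mixed sequence Orlicz space over a product of two full-rank lattices is, up to relabelling the index set, independent of the lattices, condition $(2)$ passes to the frame index set $\ep (\Lambda \times \Lambda ')$; hence for $f\in M^{\Phi _1,\Phi _2}(\rd)$ we get $C_\phi f\in \ell ^{\Phi _1,\Phi _2}\subseteq \ell ^{\Psi _1,\Psi _2}$, and therefore $f=D_\psi C_\phi f\in M^{\Psi _1,\Psi _2}(\rd)$. For $(3)\Rightarrow(2)$ I argue on the sequence side, using that membership in a sequence Orlicz space is controlled by the behaviour of the Young function near $0$: if $a\in \ell ^{\Phi }(\Lambda )$ has Luxemburg norm $\lambda _0$, then each term obeys $|a(n)|\le \Phi ^{-1}(1)\,\lambda _0$, so after dividing by a fixed factor depending only on $\Phi ^{-1}(1)$ and $t_0$ all arguments lie in $[0,t_0]$; the bound $\Psi \le C\Phi$ there, combined with convexity of $\Psi$, gives $\|a\|_{\ell ^{\Psi }}\lesssim \|a\|_{\ell ^{\Phi }}$ with a uniform constant. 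Applying this successively in the inner and outer variables yields $\ell ^{\Phi _1,\Phi _2}\subseteq \ell ^{\Psi _1,\Psi _2}$.

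The principal difficulty is $(1)\Rightarrow(3)$, where a function-space inclusion must be converted into a pointwise comparison of the $\Psi _j$ and $\Phi _j$. I would test the continuous inclusion $M^{\Phi _1,\Phi _2}(\rd)\subseteq M^{\Psi _1,\Psi _2}(\rd)$ against explicit atoms built from time-frequency shifts of the Gaussian $\phi (x)=\pi ^{-d/4}e^{-|x|^2/2}$. Because the ambiguity function $V_\phi \phi$ is again a Gaussian on $\rdd$, a finite superposition $f=s\sum _{k=1}^N e^{i\scal \cdo {\iota _k}}\phi (\cdo -x_k)$ over widely separated nodes has $|V_\phi f|$ close to $N$ disjoint Gaussian bumps of height proportional to $s$; placing the nodes so that they vary only in the first group of variables, respectively only in the second, isolates the contributions of $\Phi _1$ and of $\Phi _2$. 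Evaluating the mixed Orlicz norm of such a configuration gives $\|f\|_{M^{\Phi _1,\Phi _2}}\asymp s/\Phi _j^{-1}(1/N)$, and similarly for $\Psi _j$, so the continuity of the inclusion forces $\Phi _j^{-1}(1/N)\lesssim \Psi _j^{-1}(1/N)$ for every $N$. Letting $N\to\infty$ and transferring this comparison of inverse functions back to $\Phi _j,\Psi _j$ produces $(3)$. Two points require care: the Gaussian tails must be shown not to spoil the disjoint-bump estimate (their exponential decay lets the overlap be absorbed into the implied constants), and the passage from inverses must recover the undilated comparison $\Psi _j(t)\lesssim \Phi _j(t)$ near the origin rather than one with a dilated argument; this last reduction is exactly the content of the classical theory of Orlicz sequence-space embeddings \cite{Rao,HaH}, on which I would rely.
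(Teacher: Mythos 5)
Your overall architecture $(3)\Rightarrow(2)\Rightarrow(1)$ is sound, and it is worth noting that the paper itself offers no proof (it defers to \cite{SchF}), so the comparison is against the standard argument. The leg $(3)\Rightarrow(2)$ is the usual truncation-plus-convexity estimate -- essentially the same device the paper uses with the modified functions $\Phi _{*,k}$ inside the proof of Theorem \ref{Thm:OrliczModInvariance} -- and is correct. The leg $(2)\Rightarrow(1)$ correctly combines $D_\psi \circ C_\phi =\mathrm{Id}$ with the boundedness of analysis and synthesis operators (Theorems \ref{Thm:AnalysisOp} and \ref{Thm:SynthesisOp} in the Banach case $r_0=1$, none of which depend on Proposition \ref{Prop:ModPhiPsi}, so there is no circularity), and your relabelling remark for transferring $(2)$ between lattices is valid because only the counting measure enters the mixed Luxemburg norms. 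Your opening observation that $C_\phi D_\psi$ is a nontrivial projection, so that $(1)\Rightarrow(2)$ cannot be read off directly, is also correct.

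The genuine gap is the last step of $(1)\Rightarrow(3)$. Your bump computation (fine modulo the Gaussian-tail bookkeeping you flag, and the comparison extends from the sample points $1/N$ to all small $u$ by concavity of $\Phi _j^{-1}$) yields $\Phi _j^{-1}(u)\lesssim \Psi _j^{-1}(u)$ for small $u$, which is equivalent to the \emph{dilated} comparison $\Psi _j(t)\le K\Phi _j(Ct)$ for $0\le t\le t_0$ -- and there your argument stops. The asserted ``classical reduction'' from the dilated to the undilated bound $\Psi _j(t)\lesssim \Phi _j(t)$ does not exist: the two conditions are equivalent only under a $\Delta _2$-type (doubling) condition on $\Phi _j$ near the origin, and \cite{Rao,HaH} give precisely the dilated criterion, not the multiplicative one. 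Concretely, let $\Phi$ be a Young function with $\Phi (t)=e^{-1/t}$ for small $t>0$ and set $\Psi (t)=\Phi (2t)$. Then $\Psi (t/2)=\Phi (t)$ gives $\ell ^{\Phi}=\ell ^{\Psi}$ with equivalent norms, so $(1)$ and $(2)$ hold with $\Phi _1=\Phi _2=\Phi$ and $\Psi _1=\Psi _2=\Psi$, while $\Psi (t)/\Phi (t)=e^{1/(2t)}\to \infty$ as $t\to 0^+$, so no multiplicative bound holds near $0$. Thus, with the paper's stated convention that $\lesssim$ means a multiplicative constant, condition $(3)$ is strictly stronger than $(1)$--$(2)$, and no argument can close your step as written; the reading under which the proposition (and your proof) is correct takes $(3)$ in the dilation-invariant form $\Psi _j(t)\le C\Phi _j(Ct)$, $0\le t\le t_0$. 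With that reformulation your proof goes through: the implication $(3)\Rightarrow(2)$ absorbs the extra dilation into the Luxemburg scaling exactly as in your one-variable argument, and note that the multiplicative form is genuinely the stronger one, since $\Psi \le K\Phi$ with $K\ge 1$ already implies $\Psi (t)\le \Phi (Kt)$ by convexity.
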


\par

\par

In section \ref{sec4} we extend Proposition \ref{Prop:ModPhiPsi} to quasi-Banach case.
(See Theorem \ref{Thm:OrliczModInvariance} and Proposition \ref{Prop:OrliczModInvariance}).

\par

\subsection{The Bargmann transform and modulation spaces}

\par

We finish the section by recalling the Bargmann transform and its mapping properties
on modulation spaces.

\par

The Bargmann kernel of dimension $d$ is given by
$$
\mathfrak{A}_d(z,y)= \pi^{-\frac d 4}\exp
\left(
-\frac 12 (\scal z z +|y|^2)+2^{\frac 12}\scal zy
\right), \quad y\in \rd,\ z\in \cc d.
$$
Here
$$
\scal zw=\sum\limits _{j=1}^d z_j w_j, \quad z=(z_1,\cdots ,z_d)\in \cc d,
\ \text{and} \
w=(w_1,\cdots,w_d)\in \cc d.
$$
(See \cite{Ba}.) It follows that $y\mapsto \mathfrak{A}_d(z,y)$
belongs to $\maclS_{1/2}(\rd)$ for every $z\in \cc d$.

\par

The Bargmann transform, $\mathfrak{V}_d f$ of $f\in \maclS'_{1/2}(\rd)$
is defined by
$$
(\mathfrak{V}_d f)(z)= \scal f{\mathfrak{A}_d(z,\cdo)}.
$$
There are several results on Bargmann images of well-known function
and distribution spaces. For example, it is proved already in \cite{Ba}
that $\mathfrak{V}_d$ is isometric bijection from $L^2(\rd)=M^2(\rd)$
into $L^2(d\mu)\cap A(\cc d)$. Here
$$
d\mu(z)=\pi^{-d}e^{-|z|^2}\, d\lambda (z),
$$
where $d\lambda (z)$ is the Lebesgue measure on $\cc d$, and
$A(\cc d)$ is the set of analytic functions on $\cc d$.
A more general result of the preceding result concerns
Proposition \ref{Prop:ModBargmannImages}
below which is a special case of \cite[Theorem 4.8]{Toft18}.
(See also \cite{FGW,SiT2,Toft10} for sub results.)

\par

If $F$ is a measurable function on $\cc d$ and $p,q\in (0,\infty ]$, then
\begin{align*}
\nm {F} {B^{p,q}_{(\omega)}}
&\equiv
\nm {F_{p,\omega}}{L^{q _2}(\rr d)},
\intertext[0ex]{where}
F_{p,\omega}(\xi )
&\equiv
(2\pi )^{-\frac d2}
e^{-\frac 12\cdot {|\xi |^2}}
\nm {e^{-\frac 14\cdot {|\cdo |^2}} F(2^{-\frac 12}(\cdo -i\xi ))
\omega (\cdo ,\xi )}{L^{p}(\rr d)}.
\end{align*}
We let $B^{p,q}_{(\omega)}(\cc d)$ be the set of all measurable
functions $F$ on $\cc d$ such that $\nm {F} {B^{p,q}_{(\omega)}}$ is finite.
We also let $A^{p,q}_{(\omega)}(\cc d) = B^{p,q}_{(\omega)}(\cc d)
\bigcap A(\cc d)$ with topology inherited from the topology in
$B^{p,q}_{(\omega)}(\cc d)$.

\par

\begin{prop}\label{Prop:ModBargmannImages}
Let $p,q\in (0,\infty]$ and $\omega$ be a weight on $\rr {2d}\simeq \cc d$.
Then $\mathfrak{V}_d$ from $\maclS_{1/2}(\rd)$ to $A(\cc d)$
is uniquely extendable to an isometric bijective map from $M^{p,q}_
{(\omega)}(\rd)$ to $A^{p,q}_{(\omega)}(\cc d)$.
\end{prop}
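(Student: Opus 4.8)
The plan is to reduce the whole statement to the single explicit identity that links $\mathfrak V_d$ to the short-time Fourier transform with the Gaussian window $\phi(x)=\pi^{-\frac d4}e^{-\frac12|x|^2}$. Comparing the kernel $\mathfrak A_d(z,\cdo)$ with the modulated translate $\overline{\phi(\cdo-x)}e^{-i\scal\cdo\xi}$, a direct Gaussian computation should give, for $z=2^{-\frac12}(x-i\xi)$ and $f\in\maclS_{1/2}(\rd)$,
\[
(\mathfrak V_d f)(z)=(2\pi)^{\frac d2}e^{\frac12|z|^2+\frac i2\scal x\xi}(V_\phi f)(x,\xi),
\]
where both sides are ordinary integrals. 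Since $V_\phi f$ is a smooth function for every $f\in\maclH'_\flat(\rd)$ and $\mathfrak V_d$ is defined on all of $\maclS'_{1/2}(\rd)$, I would then extend the identity to this larger class by density and duality.

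The second step is to substitute this identity into the definition of $F_{p,\omega}$. Using $|z|^2=\tfrac12(|x|^2+|\xi|^2)$, the Gaussian weight $e^{-\frac14|\cdo|^2}$ inside the inner $L^p$-norm, together with the outer normalizing factor, is arranged precisely so as to cancel $e^{\frac12|z|^2}$, while the unimodular phase $e^{\frac i2\scal x\xi}$ disappears under the modulus. What survives is exactly $\nm{V_\phi f(\cdo,\xi)\,\omega(\cdo,\xi)}{L^p}$, so that
\[
\nm{\mathfrak V_d f}{B^{p,q}_{(\omega)}}=\nm{V_\phi f}{L^{p,q}_{(\omega)}}=\nm f{\Mpqo}.
\]
This yields the isometry on $\maclS_{1/2}(\rd)$ and, simultaneously, the equivalence $f\in\Mpqo(\rd)\Leftrightarrow\mathfrak V_d f\in B^{p,q}_{(\omega)}(\cc d)$. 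Because $\mathfrak A_d(z,\cdo)$ is entire in $z$, differentiation under the pairing shows $\mathfrak V_d f\in A(\cc d)$ for every $f$, so the image always lands in $A^{p,q}_{(\omega)}=B^{p,q}_{(\omega)}\cap A(\cc d)$.

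Third, I would pass from $\maclS_{1/2}$ to the full space. For $p,q<\infty$, $\maclS_{1/2}(\rd)$ (equivalently $\maclH_\flat(\rd)$) is dense in $\Mpqo(\rd)$, so the isometry extends uniquely by continuity. For the endpoints $p=\infty$ and/or $q=\infty$, where density fails, I would extend instead through weak-$*$ continuity, exploiting that the relevant spaces are dual spaces and that the identity above already matches the two quasi-norms on a weak-$*$ dense subspace.

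The step I expect to be the main obstacle is surjectivity onto $A^{p,q}_{(\omega)}$. Given $F\in A^{p,q}_{(\omega)}$, one must produce $f\in\Mpqo(\rd)$ with $\mathfrak V_d f=F$. Here I would use the inverse Bargmann transform, namely integration of $F$ against $\overline{\mathfrak A_d}$ with respect to the Gaussian measure $d\mu$, which inverts the unitary map $\mathfrak V_d\colon L^2(\rd)\to A^2(\cc d)$. Setting $f=\mathfrak V_d^{-1}F$, the reproducing-kernel property of Fock space (the projection onto $A(\cc d)$ acting as the identity on the analytic function $F$) gives $\mathfrak V_d f=F$, and the isometry already established forces $\nm f{\Mpqo}=\nm F{B^{p,q}_{(\omega)}}<\infty$, hence $f\in\Mpqo(\rd)$. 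The delicate point is to justify that $\mathfrak V_d^{-1}F$ genuinely defines an element of $\maclH'_\flat(\rd)$ and that the reproducing identity remains legitimate in the quasi-Banach range $p,q<1$ and at the non-reflexive endpoints, which is exactly where the analyticity of $F$ must be used in an essential way.
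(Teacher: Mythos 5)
Your first half is essentially the mechanism the paper relies on. Note that the paper itself does not prove this proposition: it quotes it as a special case of \cite[Theorem 4.8]{Toft18}, and the only argument of this type carried out in the paper is the Orlicz generalization, Theorem \ref{Thm:Bargmann}. There, the isometry is exactly your step: the intertwining of $\mathfrak{V}_d$ with $V_\phi$ for the Gaussian window, inserted into the definition of $F_{p,\omega}$, yields the exact identity $\nm{\mathfrak{V}_d f}{B^{p,q}_{(\omega)}}=\nm{f}{M^{p,q}_{(\omega)}}$, which is \eqref{Eq:Bargmannnorm} (``by the definitions'' -- the weights in $B^{p,q}_{(\omega)}$ are rigged precisely so the identity is exact; your constants in the pointwise formula would need checking, but that is a calculational detail).

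There are, however, two genuine gaps. First, your extension step (density for $p,q<\infty$, weak-$*$ at the endpoints) is both unavailable and unnecessary here: $\omega$ is an \emph{arbitrary} weight, not assumed moderate, so density of $\maclS_{1/2}(\rd)$ in $M^{p,q}_{(\omega)}(\rd)$ and a usable duality structure at the endpoints are not at your disposal. No limiting procedure is needed at all: $\mathfrak{V}_d$ and $V_\phi$ are already defined on all of $\maclS_{1/2}'(\rd)$ (indeed on $\maclH_\flat'(\rd)$, where $V_\phi f$ is a smooth function), the pointwise identity holds there, and uniqueness of the extension then comes for free from the isometry on the full space. Second, and more seriously, your surjectivity argument does not close. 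For $F\in A^{p,q}_{(\omega)}(\cc d)$ with, say, $p=q=\infty$ and rapidly decaying $\omega$ (permitted, since $\omega$ need not be moderate), $F$ may grow faster than $e^{\frac 12 |z|^2}$, and then the inversion integral $\int F(w)e^{\scal z{\bar w}}\, d\mu (w)$ against the reproducing kernel diverges; there is no reproducing formula in this generality to rescue the computation, which is exactly the ``delicate point'' you flag without resolving. The route taken in the proof of Theorem \ref{Thm:Bargmann} (and in \cite{Toft18}) bypasses the integral entirely: the Bargmann transform is a \emph{bijection} from $\maclH_\flat'(\rd)$ onto all of $A(\cc d)$ (via the Hermite-to-Taylor coefficient correspondence $\mathfrak V_d h_\alpha = z^\alpha (\alpha !)^{-\frac 12}$), so any $F\in A^{p,q}_{(\omega)}(\cc d)\subseteq A(\cc d)$ is automatically $\mathfrak{V}_d f$ for some $f\in\maclH_\flat'(\rd)$, and the norm identity then forces $f\in M^{p,q}_{(\omega)}(\rd)$. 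Without this structural input, or an equivalent coefficientwise construction of $f$ from the Taylor expansion of $F$, your proof of surjectivity fails precisely in the quasi-Banach and endpoint cases you single out.
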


\par

Apart from Proposition \ref{Prop:ModBargmannImages}, there are several
characterizations of well-known
function and distribution spaces via their images under the Bargmann transform.
For example, convenient characterization of $\maclH _\flat (\rr d)$, $\maclS_s(\rd)$,
$\Sigma_s (\rd)$, $\mathscr{S}(\rd )$ and their duals can be found in
\cite{Toft10,Toft18} for $s>0$.
Especially we remark that the Bargmann transform on $L^2(\rd)$ is uniformly extendable
to a bijective map from $\maclH_\flat' (\rd)$ to $A(\cc d)$, and restricts to a
bijective map $\maclH_\flat (\rd)$ to the set of all entire functions $F$ on $\cc d$
such that $|F(z)| \lesssim e^{r|z|}$ for some $r>0$.

\par

Let $\Phi _1$ and $\Phi _2$ be quasi Young functions.
In a similar way as for the definition of $B^{p,q}_{(\omega)}(\cc d)$ above, we let
$B^{\Phi _1, \Phi _2}_{(\omega)}(\cc d)$ be the set of all measurable functions
$F$ on $\cc d$ such that $\nm {F} {B^{\Phi _1, \Phi _2}_{(\omega)}}$ is finite, where
\begin{align*}
\nm {F} {B^{\Phi _1, \Phi _2}_{(\omega)}}
&\equiv
\nm {F_{\Phi _1,\omega}}{L^{\Phi _2}(\rr d)},
\intertext[0ex]{with}
F_{\Phi _1,\omega}(\xi )
&\equiv
(2\pi )^{-\frac d2}
e^{-\frac 12\cdot {|\xi |^2}}
\nm {e^{-\frac 14 \cdot {|\cdo |^2}} F(2^{-\frac 12}(\cdo -i\xi ))
\omega (\cdo ,\xi )}{L^{\Phi _1}(\rr d)}.
\end{align*}
We also let $A^{\Phi _1,\Phi _2}_{(\omega)}(\cc d)=B^{\Phi _1,\Phi _2}_{(\omega)}
(\cc d)\cap A(\cc d)$ with topology inherited from the topology in
$B^{\Phi _1,\Phi _2}_{(\omega)}(\cc d)$.
It follows that $B^{p,q}_{(\omega)}={B^{\Phi _1,\Phi _2}_{(\omega)}}$
and $A^{p,q}_{(\omega)}={A^{\Phi _1,\Phi _2}_{(\omega)}}$
when $\Phi _1$ and $\Phi _2$ are chosen $\Phi _1(t)=\frac{t^p}{p}$
and $\Phi _2(t)=\frac{t^q}{q}$, giving that $L^{\Phi _1,\Phi _2}_{(\omega)}
=L^{p,q}_{(\omega)}$.

\par

\section{Continuity and Bargmann images of Orlicz
modulation spaces}\label{sec2}

\par

In this section we extend Proposition \ref{Prop:ModBargmannImages}
to more general weights and to the Orlicz case (see Theorem \ref{Thm:Bargmann}).
At the same time we prove that the Orlicz modulation spaces are quasi-Banach
spaces, by deducing similar facts of their Bargmann images.

\par

The extension of Proposition \ref{Prop:ModBargmannImages} is the following.
Here we let $\phi(x)=\pi^{-\frac d4}e^{-\frac 12\cdot {|x|^2}}$ in the modulation
space norms.

\par

\begin{thm} \label{Thm:Bargmann}
Let $\omega$ be a weight on $\rdd \simeq \cc d$ and $\Phi _1$, $\Phi _2$ be quasi
Young functions of order $r_0 \in(0,1]$. Then the following is true:
\begin{enumerate}
    \item $A^{\Phi _1,\Phi _2}_{(\omega)}(\cc d)$, $B^{\Phi _1,\Phi _2}_{(\omega)}(\cc d)$ and
    $M^{\Phi _1,\Phi _2}_{(\omega)}(\rd)$ are quasi-Banach spaces of order $r_0$;

\vrum

    \item the Bargmann transform is isometric and bijective from
    $M^{\Phi _1,\Phi _2}_{(\omega)}(\rd)$ to $A^{\Phi _1,\Phi _2}_{(\omega)}(\cc d)$.
\end{enumerate}
\end{thm}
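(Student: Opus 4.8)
The plan is to reduce the two assertions of Theorem \ref{Thm:Bargmann} to the already-established classical case (Proposition \ref{Prop:ModBargmannImages}) together with the $r_0$-homogeneity bookkeeping that converts quasi-Young functions into genuine Young functions. The central observation is the identity \eqref{Eq:OrliczNormTransfers}: writing $\Phi_j(t) = \Phi_{0,j}(t^{r_0})$ for Young functions $\Phi_{0,j}$, one has
\begin{equation*}
\nm f{L^{\Phi_1,\Phi_2}_{(\omega)}}
=
\left(\nm{|f|^{r_0}}{L^{\Phi_{0,1},\Phi_{0,2}}_{(\omega^{r_0})}}\right)^{1/r_0},
\end{equation*}
and the analogous identity on the Bargmann side for $\nm F{B^{\Phi_1,\Phi_2}_{(\omega)}}$, where raising $F$ to an $r_0$-power interacts cleanly with the Gaussian weights $e^{-\frac12|\xi|^2}$ and $e^{-\frac14|\cdot|^2}$ since these are absorbed into the new weight. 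So the strategy throughout is: raise to the power $r_0$, land in a genuine Banach Orlicz setting, invoke the known $r_0=1$ theory, and take the $r_0$-th root back.

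\emph{Part (1): quasi-Banach structure.} First I would establish that $M^{\Phi_1,\Phi_2}_{(\omega)}(\rd)$ is complete and that $\nm\cdot{M^{\Phi_1,\Phi_2}_{(\omega)}}$ is a quasi-norm of order $r_0$. The quasi-triangle inequalities \eqref{Eq:WeakTriangle1} and \eqref{Eq:WeakTriangle2} for the norm transfer verbatim from the corresponding properties of $L^{\Phi_1,\Phi_2}_{(\omega)}(\rdd)$, which were proved in the Remark following Definition \ref{d1}, since $\nm f{M^{\Phi_1,\Phi_2}_{(\omega)}}=\nm{V_\phi f}{L^{\Phi_1,\Phi_2}_{(\omega)}}$ and $V_\phi$ is linear. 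For completeness, the cleanest route is to prove completeness of $B^{\Phi_1,\Phi_2}_{(\omega)}(\cc d)$ and $A^{\Phi_1,\Phi_2}_{(\omega)}(\cc d)$ first — using the $r_0$-power identity and completeness of the $r_0=1$ Orlicz spaces — and then transport completeness to $M^{\Phi_1,\Phi_2}_{(\omega)}(\rd)$ through the isometry in Part (2). That $A^{\Phi_1,\Phi_2}_{(\omega)}$ is closed in $B^{\Phi_1,\Phi_2}_{(\omega)}$ follows because quasi-norm convergence in $B$ forces local uniform convergence on $\cc d$ (the Gaussian weights give pointwise domination on compacts), and a locally uniform limit of entire functions is entire.

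\emph{Part (2): isometry and bijectivity of $\mathfrak V_d$.} The isometry $\nm f{M^{\Phi_1,\Phi_2}_{(\omega)}}=\nm{\mathfrak V_d f}{A^{\Phi_1,\Phi_2}_{(\omega)}}$ is where the definitions of $L^{\Phi_1,\Phi_2}_{(\omega)}$ and $B^{\Phi_1,\Phi_2}_{(\omega)}$ were engineered to match: the factors $e^{-\frac12|\xi|^2}$ and $e^{-\frac14|\cdot|^2}F(2^{-\frac12}(\cdot-i\xi))$ in $F_{\Phi_1,\omega}$ are precisely the pointwise relation between $\mathfrak V_d f$ and $V_\phi f$ for the Gaussian window $\phi$. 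I would make this explicit: recall the standard pointwise identity relating the Bargmann transform to the STFT with Gaussian window (the same identity underlying Proposition \ref{Prop:ModBargmannImages}), substitute it into the definition of $\nm\cdot{B^{\Phi_1,\Phi_2}_{(\omega)}}$, and observe that the inner $L^{\Phi_1}$-quasi-norm in $\xi$-slices equals the inner quasi-norm in Definition \ref{d1}, with the outer $L^{\Phi_2}$-quasi-norms then agreeing. Because this is a pointwise, weight-by-weight identity of the integrands, it holds independently of whether $\Phi_1,\Phi_2$ are Young or merely quasi-Young, so no extra argument beyond the $r_0=1$ case is needed. Bijectivity onto $A^{\Phi_1,\Phi_2}_{(\omega)}(\cc d)$ then follows: injectivity is immediate from the isometry, and for surjectivity one takes $F\in A^{\Phi_1,\Phi_2}_{(\omega)}(\cc d)$, recovers a candidate $f\in\maclH'_\flat(\rd)$ via the known bijection $\mathfrak V_d\colon\maclH'_\flat(\rd)\to A(\cc d)$ recalled after Proposition \ref{Prop:ModBargmannImages}, and the finiteness of $\nm F{B^{\Phi_1,\Phi_2}_{(\omega)}}$ together with the isometry forces $f\in M^{\Phi_1,\Phi_2}_{(\omega)}(\rd)$.

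\textbf{The main obstacle} I anticipate is not any single hard estimate but rather justifying that all objects live in the correct ambient distribution space so that the pointwise identities are legitimate — specifically, that for $f\in\maclH'_\flat(\rd)$ the transform $V_\phi f$ with Gaussian $\phi$ is a genuine smooth function (which is exactly the content recalled from (2.25) and Theorem 4.1 of \cite{Toft18}), and that the weight $\omega\in\mascP_E$ is general enough that the Gaussian factors dominate its exponential growth, keeping the inner quasi-norms finite on the dense subspace $\maclH_\flat(\rd)$ before passing to the closure. Once the pointwise STFT–Bargmann identity is in place and the $r_0$-power reduction \eqref{Eq:OrliczNormTransfers} is invoked, the remainder is a transcription of Proposition \ref{Prop:ModBargmannImages} with the genuine Young functions $\Phi_{0,1},\Phi_{0,2}$ and the rescaled weight.
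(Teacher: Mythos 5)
Your overall architecture coincides with the paper's own proof: the isometry \eqref{Eq:Bargmannnorm} is read off from the definitions via the pointwise relation between the Bargmann transform and the Gaussian-window short-time Fourier transform (valid for quasi-Young as well as Young functions, exactly as you say); surjectivity comes from the known bijection $\mathfrak{V}_d\colon \maclH _\flat '(\rd)\to A(\cc d)$ plus finiteness of the norm; completeness of $B^{\Phi _1,\Phi _2}_{(\omega)}(\cc d)$ is inherited from $L^{\Phi _1,\Phi _2}_{(\omega)}(\rdd)$ via \eqref{Eq:OrliczNormTransfers}; and completeness of $M^{\Phi _1,\Phi _2}_{(\omega)}(\rd)$ is transported through the isometry of part (2). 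All of this matches the paper step for step.

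The one place where your argument as written has a gap is the closedness of $A^{\Phi _1,\Phi _2}_{(\omega)}(\cc d)$ in $B^{\Phi _1,\Phi _2}_{(\omega)}(\cc d)$. Your parenthetical justification --- ``quasi-norm convergence in $B$ forces local uniform convergence on $\cc d$ (the Gaussian weights give pointwise domination on compacts)'' --- is not a proof: a weighted Orlicz quasi-norm controls at best local convergence in measure for measurable functions, and the Gaussian factors by themselves yield no pointwise bounds. What makes the claim true for your Cauchy sequence is that its members are \emph{entire}: since $|F|^{r_0}$ is plurisubharmonic, the sub-mean-value inequality $|F(z)|^{r_0}\lesssim \int _{|w-z|\le 1}|F(w)|^{r_0}\, d\lambda (w)$ combined with the lower bound $\Phi _j(t)\gtrsim (t-t_0)^{r_0}$ for $t>t_0$ (which embeds $L^{\Phi _1,\Phi _2}_{(\omega)}$ locally into $L^{r_0}$, cf.\ Lemma \ref{Lemma:Embeddings}) makes point evaluations continuous on $A^{\Phi _1,\Phi _2}_{(\omega)}(\cc d)$, and only then does $B$-convergence of entire functions give local uniform convergence. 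This analyticity input is precisely what the paper isolates in Lemma \ref{Lem.Bargmann}: the continuous embedding $A^{\Phi _1,\Phi _2}_{(\omega)}(\cc d)\hookrightarrow A^{r_0}_{(\omega _r)}(\cc d)$, after which the Cauchy sequence converges in the complete space $A^{r_0}_{(\omega _r)}(\cc d)$ to an analytic limit agreeing a.e.\ with the $B$-limit. So once you make the mean-value step explicit your route reduces to the paper's; the paper's formulation has the advantage of packaging the analyticity argument in a reusable embedding lemma and of working for a general weight $\omega$ (as the theorem is stated), so your closing concern about needing $\omega \in \mascP _E$ is unnecessary --- local boundedness of $\omega$ and $1/\omega$ suffices for all the local estimates involved.
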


\par

We need the following lemma for the proof. Here and in what follows we let
$\eabs z=(1+|z|^2)^{\frac 12}$ when $z\in \cc d$.

\par

\begin{lemma}\label{Lem.Bargmann}
Let $\omega$ be a weight on $\rdd \simeq \cc d$, $\Phi _j$ be quasi
Young functions of order $r_0 \in(0,1]$, $j=1,2$. Then the following is true:
\begin{enumerate}
    \item if $\rho \in (0,1]$ and $\omega_r(z)=\omega(z)e^{-r|z|^\rho}$ for
    some $r>0$, then $A^{\Phi _1,\Phi _2}_{(\omega)}(\cc d)$ is continuously
    embedded in $A^{r_0}_{(\omega_r)}(\cc d)$;

    \vrum

    \item if $r> \frac{2d}{r_0}$ and $\omega_r(z)=\omega(z)\eabs z^{-r}$, then
    $A^{\Phi _1,\Phi _2}_{(\omega)}(\cc d)$ is continuously embedded in
    $A^{r_0}_{(\omega_r)}(\cc d)$.
\end{enumerate}
\end{lemma}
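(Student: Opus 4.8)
The plan is to establish both embeddings by comparing the relevant Orlicz norms against weighted $L^{r_0}$-norms pointwise on $\cc d$, using the observation that the problematic exponential or polynomial weight factor $e^{-r|z|^\rho}$ (respectively $\eabs z^{-r}$) is what converts the Orlicz integrability into ordinary $L^{r_0}$-integrability. First I would reduce to the Young-function case: by \eqref{Eq:OrliczNormTransfers} we may write $\Phi_j(t)=\Phi_{0,j}(t^{r_0})$ for Young functions $\Phi_{0,j}$, and it suffices to compare $\nm{F}{B^{\Phi_1,\Phi_2}_{(\omega)}}$ with $\nm{F}{A^{r_0}_{(\omega_r)}}=\nm{F}{B^{r_0,r_0}_{(\omega_r)}}$. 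The key reduction is the elementary inclusion $L^{\Phi_0}(\mu)\hookrightarrow L^1(\mu)$ together with a weighted variant: since any Young function $\Phi_0$ satisfies $\Phi_0(t)\ge c\,t$ for large $t$, one obtains $\nm{g}{L^1}\lesssim \nm{g}{L^{\Phi_0}}$ on a finite-measure space, but on $\cc d$ (infinite measure) one must first absorb a summable weight. This is exactly what the factor $e^{-r|z|^\rho}$ or $\eabs z^{-r}$ provides.

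Concretely, I would proceed in the following steps. Let $F\in A^{\Phi_1,\Phi_2}_{(\omega)}(\cc d)$ and set $G=|F|^{r_0}$, so that $\nm{F}{B^{\Phi_1,\Phi_2}_{(\omega)}}^{r_0}=\nm{G}{B^{\Phi_{0,1},\Phi_{0,2}}_{(\omega)}}$ is finite. The goal becomes showing $G\cdot(\omega_r/\omega)^{r_0}\in L^1(\rdd)$ with the weighted $L^1$-norm controlled by the $B^{\Phi_{0,1},\Phi_{0,2}}_{(\omega)}$-norm of $G$, since $\nm{F}{A^{r_0}_{(\omega_r)}}^{r_0}$ is comparable to such a weighted $L^1$-quantity after unwinding the definition of $F_{\Phi_1,\omega}$ and $F_{r_0,\omega_r}$. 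In case (1) the weight ratio is $(\omega_r/\omega)^{r_0}(z)=e^{-rr_0|z|^\rho}$, which decays faster than any polynomial; in case (2) it is $\eabs z^{-rr_0}$ with $rr_0>2d$, hence integrable against the Lebesgue measure $d\lambda$ on $\cc d\simeq\rdd$. In either case I would apply the pointwise bound $G(z)\lesssim \Phi_{0,1}(G(z)/\lambda_1)\cdot\lambda_1$ valid where $G/\lambda_1$ is large, split the integral into the region where the Orlicz integrand dominates and its complement (where $G$ is bounded by a constant multiple of $\lambda_1$), and use the defining inequality $\int\Phi_{0,j}(\cdots)\le 1$ from the Orlicz norm to control the first region, while the summable weight controls the second.

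The main obstacle I anticipate is handling the \emph{mixed} norm correctly: the two Young functions $\Phi_{0,1}$ and $\Phi_{0,2}$ act in the two variables $\cdo$ and $\xi$ separately in the definition of $F_{\Phi_1,\omega}$, so the comparison $L^{\Phi_0}\hookrightarrow L^{r_0}$ must be carried out iteratively, first in the inner $L^{\Phi_1}$-variable and then in the outer $L^{\Phi_2}$-variable, while tracking the weight $\omega(\cdo,\xi)$ and the Gaussian factors $e^{-\frac12|\xi|^2}$ and $e^{-\frac14|\cdo|^2}$ that appear in $F_{\Phi_1,\omega}$. To keep the two stages compatible I would allocate the decay of $\omega_r/\omega$ into a product of a factor depending on the inner variable and one depending on the outer variable, choosing the split so that each factor is summable against the corresponding Gaussian-Orlicz integrand; for the exponential weight in (1) this is immediate since $e^{-rr_0|z|^\rho}$ already dominates any Gaussian, and for the polynomial weight in (2) the condition $rr_0>2d$ is precisely what guarantees integrability in the full $2d$ real variables after the split. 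Once the weighted $L^1$-bound on $G$ is in place, raising to the power $1/r_0$ returns the desired continuous embedding into $A^{r_0}_{(\omega_r)}(\cc d)$.
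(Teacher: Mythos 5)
Your proposal is correct, and it rests on the same two pillars as the paper's own proof: the lower bound $\Phi _j(t)\gtrsim t^{r_0}$ for large $t$ (convexity of the underlying Young functions) and the integrability of $\eabs z^{-rr_0}$ on $\rr {2d}$ precisely when $rr_0>2d$; your split of the integral into the region where the Orlicz integrand dominates and its complement is exactly the decomposition $F=F\chi _{\{|F|>t_0\}}+F\chi _{\{|F|\le t_0\}}$ in disguise. The execution differs in two places where the paper is more economical. First, the paper disposes of case (1) at the outset through the pointwise comparison $e^{-r|z|^{\rho}}\lesssim \eabs z^{-r}$, so that only (2) requires proof, whereas you treat the subexponential weight separately. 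Second, and more substantively, the paper never unwinds the mixed norm: from $\Phi _j(t)\ge C(t-t_0)^{r_0}$ for $t>t_0$ it reads off the sum-space inclusion $L^{\Phi _1,\Phi _2}_{(\omega )}\subseteq L^{r_0}_{(\omega )}+L^{\infty}_{(\omega )}$ (the same mechanism as in Lemma \ref{Lemma:Embeddings}), and then embeds each summand into $L^{r_0}_{(\omega _r)}$ --- the first trivially since $\eabs z^{-r}\lesssim 1$, the second by the integrability of $\eabs z^{-rr_0}$. This makes the ``main obstacle'' you anticipate --- iterating the comparison through the inner $L^{\Phi _1}$- and outer $L^{\Phi _2}$-variables while splitting the decaying weight into a product --- simply evaporate, since the sum-space bound applies to the mixed-norm space as a whole. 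Your iterated argument does go through (the factorization $\eabs {(x,\xi )}^{-rr_0}\le (\eabs x\eabs \xi )^{-rr_0/2}$ is valid because $\eabs {(x,\xi )}^2\ge \eabs x\eabs \xi$, and $rr_0/2>d$ gives integrability in each $d$-dimensional variable), so what you lose relative to the paper is only brevity. One further small remark: the Gaussian factors appearing in $F_{\Phi _1,\omega}$ and in the corresponding quantity for $B^{r_0}_{(\omega _r)}$ are identical on both sides of the comparison and therefore cancel, so there is no need to allocate any of the weight decay against them; only the ratio $\omega _r/\omega$ matters, as your computation in fact shows.
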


\par

\begin{proof}
We have $e^{-r|z|^\rho}\lesssim \eabs z ^{-r}$, which implies that
it suffices to prove (2). Since  $\Phi _1$ and $\Phi _2$ are quasi Young
functions of order $r_0$, we have
\begin{align*}
    \Phi _j(t)\ge
    \begin{cases}
      0, & 0\le t\le t_0,
      \\[1ex]
      C (t-t_0)^{r_0}, &t>t_0
    \end{cases}
\end{align*}
for some choices of $t_0>0$ and $C>0$. This implies that $L^{\Phi _1,\Phi _2}_{(\omega)}
\subseteq L^{r_0}_{(\omega)}+L^{\infty}_{(\omega)}$. Since
$L^{\infty}_{(\omega)} \subseteq L^{r_0}_{(\omega_r)}$ and
$L^{r_0}_{(\omega)} \subseteq L^{r_0}_{(\omega_r)}$ we get
$L^{\Phi _1,\Phi _2}_{(\omega)} \subseteq L^{r_0}_{(\omega_r)}$,
which in turn leads to $A^{\Phi _1,\Phi _2}_{(\omega)}(\cc d)
\subseteq A^{r_0}_{(\omega_r)}(\cc d)$, with continuous inclusion.
\end{proof}

\par

\begin{proof}[Proof of Theorem \ref{Thm:Bargmann}]
Let $\rho \in (0,1)$ and $\omega_r=\omega\cdot e^{-r|\cdo |^\rho}$. Since
$B^{\Phi _1,\Phi _2}_{(\omega)}(\cc d)$ is essentially a weighted Orlicz
space, the completeness of $B^{\Phi _1,\Phi _2}_{(\omega)}(\cc d)$ follows from
the completeness of $L^{\Phi _1,\Phi _2}_{(\omega)}(\rdd )$. Suppose that
$\{F_j\}_{j=1}^\infty$ is a Cauchy sequence in $A^{\Phi _1,\Phi _2}_{(\omega)}(\cc d)$. Since
$A^{\Phi _1,\Phi _2}_{(\omega)}(\cc d)$ is continuously embedded in $A^{r_0}_{(\omega_r)}(\cc d)$
for every $r>0$, $\{F_j\}_{j=1}^\infty$ is a Cauchy sequence in $A^{r_0}_{(\omega_r)}(\cc d)$ as well.

\par

By completeness of $B^{\Phi _1,\Phi _2}_{(\omega)}(\cc d)$ and  $A^{r_0}_{(\omega_r)}(\cc d)$,
there are $F \in B^{\Phi _1,\Phi _2}_{(\omega)}(\cc d)$ and $F_0 \in A^{r_0}_{(\omega_r)}(\cc d)$
such that
$$
F_j \rightarrow F \; \text{in} \; B^{\Phi _1,\Phi _2}_{(\omega)}(\cc d) \quad
\textstyle{and}\quad F_j \rightarrow F_0 \; \text{in}\; A^{r_0}_{(\omega_r)}(\cc d).
$$
Since $B^{\Phi _1,\Phi _2}_{(\omega)}(\cc d)$ and $ A^{r_0}_{(\omega_{r})}(\cc d)$
are equipped with weighted Lebesgue norms we have $F=F_0$ a.e. Since $F_0 \in A(\cc d)$,
we get
$$
F\in B^{\Phi _1,\Phi _2}_{(\omega)}(\cc d) \cap
A(\cc d)= A^{\Phi _1,\Phi _2}_{(\omega)}(\cc d)
$$
giving the completeness of $A^{\Phi _1,\Phi _2}_{(\omega)}(\cc d)$.

\par

By the completeness of  $A^{\Phi _1,\Phi _2}_{(\omega)}(\cc d)$, the
completeness of  $M^{\Phi _1,\Phi _2}_{(\omega)}(\rd)$ follows if we prove (2).

\par

By the definitions it follows that
\begin{equation} \label{Eq:Bargmannnorm}
    \nm {\mathfrak{V}_d f}{B^{\Phi _1,\Phi _2}_{(\omega)}}
    =\nm {f} {M^{\Phi _1,\Phi _2}_{(\omega)}}, \quad f\in \maclH_\flat(\rd).
\end{equation}
Since $\mathfrak{V}_d f\in A(\cc d)$ when $f\in \maclH_\flat(\rd)$, it follows from
\eqref{Eq:Bargmannnorm} that $\mathfrak{V}_d f\in B^{\Phi _1,\Phi _2}_{(\omega)}
\cap A(\cc d)=A^{\Phi _1,\Phi _2}_{(\omega)}(\cc d)$ when
$f\in M^{\Phi _1,\Phi _2}_{(\omega)}(\rd)$. This shows that $\mathfrak{V}_d$ is an
isometric injection from $M^{\Phi _1,\Phi _2}_{(\omega)}(\rd)$ to
$A^{\Phi _1,\Phi _2}_{(\omega)}(\cc d)$. We need to prove the surjectivity of this map.

\par

Suppose $F\in A^{\Phi _1,\Phi _2}_{(\omega)}(\cc d)\subseteq A(\cc d)$. Since any element
in $ A(\cc d)$ is a Bargmann transform of an element in $\maclH'_\flat(\rd)$, we have
$$
F=\mathfrak{V}_d f
$$
for some $f\in \maclH'_\flat(\rd)$. By \eqref{Eq:Bargmannnorm} we get
$$
\nm {f}{M^{\Phi _1,\Phi _2}_{(\omega)}}
=  \nm {\mathfrak{V}_d f}{A^{\Phi _1,\Phi _2}_{(\omega)}}
<\infty ,
$$
giving that $f\in M^{\Phi _1,\Phi _2}_{(\omega)}(\rd)$. This gives the asserted surjectivity,
and thereby the result.
\end{proof}

\par

We have now the following inclusion relations between Orlicz modulation spaces,
Gelfand-Shilov and Schwartz spaces and their duals. We recall Subsection
\ref{subsec1.2} for notations on weight classes.

\par

\begin{prop}\label{Prop:ThreeCases}
Let $s_0,\sigma _0\ge \frac 12$, $s,\sigma > \frac 12$, $\Phi _1$, $\Phi _2$
be quasi Young functions of order $r_0 \in (0,1]$,
$$
v_{r}(x,\xi )= (1+|x|+|\xi |)^r
\quad \text{and}\quad
v_{s,\sigma ,r}(x,\xi )= e^{r(|x|^{\frac 1s}+|\xi |^{\frac 1\sigma})}.
$$
Then
\begin{alignat*}{2}
\Sigma _s^\sigma (\rr d) &= \bigcap _{r>0}M^{\Phi _1,\Phi _2}_{(v_{s,\sigma ,r})}(\rd ), &
\quad
(\Sigma _s^\sigma )'(\rr d) &= \bigcup _{r>0}M^{\Phi _1,\Phi _2}_{(1/v_{s,\sigma ,r})}(\rd ),
\\[1ex]
\maclS _{s_0}^{\sigma _0}(\rr d) &= \bigcup _{r>0}
M^{\Phi _1,\Phi _2}_{(v_{s_0,\sigma _0,r})}(\rd ), &
\quad
(\maclS _{s_0}^{\sigma _0} )'(\rr d) &= \bigcap _{r>0}
M^{\Phi _1,\Phi _2}_{(1/v_{s_0,\sigma _0,r})}(\rd ),
\intertext{and}
\mascS (\rr d) &= \bigcap _{r>0}M^{\Phi _1,\Phi _2}_{(v_{r})}(\rd ), &
\quad
\mascS '(\rr d) &= \bigcup _{r>0}M^{\Phi _1,\Phi _2}_{(1/v_{r})}(\rd ).
\end{alignat*}
\end{prop}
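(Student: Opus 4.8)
The plan is to squeeze each space $M^{\Phi_1,\Phi_2}_{(\omega)}(\rd)$ between two classical quasi-Banach modulation spaces $M^{r_0}$ whose weights differ from $\omega$ only by a power of $\eabs{\cdot}$, and then to quote the corresponding identities for $M^{r_0}$. The one nontrivial inclusion I need is the ``upper'' one: combining the isometry $M^{\Phi_1,\Phi_2}_{(\omega)}(\rd)\simeq A^{\Phi_1,\Phi_2}_{(\omega)}(\cc d)$ from Theorem \ref{Thm:Bargmann}, the embedding in Lemma \ref{Lem.Bargmann}(2), and the case $p=q=r_0$ of Proposition \ref{Prop:ModBargmannImages} (which identifies $A^{r_0}_{(\omega_r)}(\cc d)$ with $M^{r_0}_{(\omega_r)}(\rd)$), I obtain for every weight $\omega$ and every $r>2d/r_0$ a continuous inclusion $M^{\Phi_1,\Phi_2}_{(\omega)}(\rd)\subseteq M^{r_0}_{(\omega\eabs{\cdot}^{-r})}(\rd)$.

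The reverse inclusions will not be needed in this generality; instead I would prove the elementary fact that product-type rapid decay forces membership in $L^{\Phi_1,\Phi_2}$. Writing $\Phi_j(t)=\Phi_{0,j}(t^{r_0})$ and using $\Phi_{0,j}(t)\le t\,\Phi_{0,j}(1)$ for $0\le t\le 1$ (convexity together with $\Phi_{0,j}(0)=0$), one first checks that $L^1(\rd)\cap L^\infty(\rd)\subseteq L^{\Phi_{0,j}}(\rd)$. Since the Luxemburg norm is positively homogeneous, the mixed norm of a product $a(x)b(\xi)$ factorizes, so if $|h(x,\xi)|\lesssim e^{-\delta|x|^{1/s}}e^{-\delta|\xi|^{1/\sigma}}$ for some $\delta>0$, or $|h(x,\xi)|\lesssim\eabs x^{-M}\eabs\xi^{-M}$ for every $M$, then $h\in L^{\Phi_1,\Phi_2}(\rdd)$. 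Applying this to $h=V_\phi f\cdot\omega$ yields $f\in M^{\Phi_1,\Phi_2}_{(\omega)}(\rd)$ whenever $V_\phi f$ decays fast enough against the weight.

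With these two ingredients each of the six identities reduces to weight bookkeeping; I illustrate the first. Given $f\in\bigcap_r M^{\Phi_1,\Phi_2}_{(v_{s,\sigma,r})}(\rd)$, the upper inclusion places $f$ in $M^{r_0}_{(v_{s,\sigma,r}\eabs{\cdot}^{-r'})}(\rd)$ for every $r$; since $\eabs{(x,\xi)}^{r'}\lesssim_\ep e^{\ep(|x|^{1/s}+|\xi|^{1/\sigma})}$ (the exponents $1/s,1/\sigma$ being positive), one has $v_{s,\sigma,r}\eabs{\cdot}^{-r'}\gtrsim v_{s,\sigma,r-\ep}$, so $f\in\bigcap_r M^{r_0}_{(v_{s,\sigma,r})}(\rd)=\Sigma_s^\sigma(\rd)$ by the classical characterization of Gelfand--Shilov spaces through modulation spaces (cf.\ \cite{Toft18} and \eqref{Eq:GSFtransfChar}). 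Conversely, if $f\in\Sigma_s^\sigma(\rd)$ then $|V_\phi f(x,\xi)|\lesssim e^{-R(|x|^{1/s}+|\xi|^{1/\sigma})}$ for every $R$, and choosing $R>r$ the decay lemma gives $f\in M^{\Phi_1,\Phi_2}_{(v_{s,\sigma,r})}(\rd)$ for every $r$. The Roumieu space $\maclS_{s_0}^{\sigma_0}$ and the two $\mascS$-identities are handled in the same way, replacing the intersection by a union and ``every $r$'' by ``some $r$'' wherever a union occurs, and the three dual identities follow by trading decay for the corresponding (at most polynomial or sub-exponential) growth and using the weights $1/v_{s,\sigma,r}$, $1/v_{s_0,\sigma_0,r}$ and $1/v_r$.

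The main point requiring care, and the only real obstacle, is precisely this last bookkeeping: one must verify that the perturbed families $\{v_{s,\sigma,r}\eabs{\cdot}^{\pm r'}\}_r$ are cofinal (for the intersections) and coinitial (for the unions) with the original families $\{v_{s,\sigma,r}\}_r$. This is where the hypotheses $s,\sigma>\tfrac12$ and $s_0,\sigma_0\ge\tfrac12$ enter only through $1/s,1/\sigma>0$, guaranteeing that a polynomial weight loss is always dominated by an arbitrarily small change of the exponent; for the Schwartz scale $v_r=(1+|x|+|\xi|)^r$ the same holds because a polynomial times a polynomial stays in the family. Once this is settled the remaining verifications are mechanical translations of the known $M^{r_0}$ identities through the upper inclusion and the decay lemma.
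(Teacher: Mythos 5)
Your argument is correct, but it takes a genuinely different route from the paper's. The paper's proof is a two-step citation-based reduction: it first quotes the six identities in the Lebesgue case $M^{p,q}_{(\omega)}$ (Remark 1.3(6) in \cite{Toft8}, Theorem 3.9 in \cite{Toft10}, Proposition 6.5 in \cite{Toft18}), and then transfers them to general quasi-Young $\Phi _j$ via the two-sided sandwich of Lemma \ref{Lemma:Embeddings}, $L^{r_0}_{(\omega)}\cap L^{\infty}_{(\omega)}\hookrightarrow L^{\Phi _1,\Phi _2}_{(\omega)}\hookrightarrow L^{r_0}_{(\omega)}+L^{\infty}_{(\omega)}$, combined with Theorem 3.2 in \cite{Toft10}; since that sandwich keeps the \emph{same} weight and each identity holds simultaneously for $p=q=r_0$ and $p=q=\infty$, the pinched Orlicz space inherits it with no perturbation of the weight families at all. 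You instead obtain the upper inclusion by the Bargmann detour $M^{\Phi _1,\Phi _2}_{(\omega)}\simeq A^{\Phi _1,\Phi _2}_{(\omega)}\hookrightarrow A^{r_0}_{(\omega \eabs \cdo ^{-r})}\simeq M^{r_0}_{(\omega \eabs \cdo ^{-r})}$, and the lower inclusion by a direct factorized-decay computation in $L^{\Phi _1,\Phi _2}$, which is in effect a hands-on instance of the left half of Lemma \ref{Lemma:Embeddings}. What your route buys: it invokes only statements proved or stated in this paper (Theorem \ref{Thm:Bargmann}, Lemma \ref{Lem.Bargmann}, Proposition \ref{Prop:ModBargmannImages}) together with the classical STFT characterizations, and everything you use is valid for arbitrary, possibly non-moderate weights --- a genuine point, since $v_{s,\sigma ,r}\notin \mascP _E(\rr {2d})$ when $s<1$ or $\sigma <1$, so the moderate-weight Corollary \ref{Cor:ModWienerInv} could not have been used instead. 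The cost is the polynomial weight loss $\eabs \cdo ^{-r}$, which forces precisely the cofinality bookkeeping you single out (harmless here, as $\eabs X^{r'}\lesssim _\ep e^{\ep (|x|^{1/s}+|\xi |^{1/\sigma})}$ and $v_r\eabs \cdo ^{-r'}\asymp v_{r-r'}$), whereas the paper's same-weight sandwich avoids it entirely. Two small points: in $\Phi _{0,j}(t)\le t\, \Phi _{0,j}(1)$ you should replace $1$ by some $t_0$ with $\Phi _{0,j}(t_0)<\infty$, degenerate Young functions being tacitly excluded exactly as in the paper's own proof of Lemma \ref{Lemma:Embeddings}; and note that both proofs ultimately rest on the same cited Lebesgue-parameter identities from \cite{Toft8,Toft10,Toft18}, so neither is more elementary at its core.
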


\par

\begin{cor}\label{Cor:ThreeCases}
Let $\Phi _1$, $\Phi _2$ be the same as in Proposition
\ref{Prop:ThreeCases}. Then
\begin{alignat*}{3}
    \Sigma_1 &= \bigcap _{\omega \in \mascP _E}
    M^{\Phi _1,\Phi _2}_{(\omega)}, &
    \quad
    \maclS _1 &= \bigcap _{\omega \in \mascP _E^0}
    M^{\Phi _1,\Phi _2}_{(\omega)}, &
    \quad
    \mascS &= \bigcap _{\omega \in \mascP}
    M^{\Phi _1,\Phi _2}_{(\omega)}
\intertext{and}
    \Sigma_1' &=
    \bigcup _{\omega \in \mascP _E}
    M^{\Phi _1,\Phi _2}_{(\omega)}, &
    \quad
    \maclS _1' &= \bigcup _{\omega \in \mascP _E^0}
    M^{\Phi _1,\Phi _2}_{(\omega)}, &
    \quad
    \mascS ' &= \bigcup _{\omega \in \mascP}
    M^{\Phi _1,\Phi _2}_{(\omega)}.
\end{alignat*}
\end{cor}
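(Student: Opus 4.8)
The plan is to derive every identity in Corollary~\ref{Cor:ThreeCases} from Proposition~\ref{Prop:ThreeCases} by specializing $s=\sigma =1$ (resp. $s_0=\sigma _0=1$) and then combining the resulting one-parameter descriptions with two elementary ingredients. The first is the \emph{monotonicity} $M^{\Phi _1,\Phi _2}_{(\omega _1)}(\rd)\supseteq M^{\Phi _1,\Phi _2}_{(\omega _2)}(\rd)$ whenever $\omega _1\lesssim \omega _2$, which is immediate from solidity of the mixed Orlicz quasi-norm, since $|V_\phi f\cdot \omega _1|\lesssim |V_\phi f\cdot \omega _2|$ pointwise forces the corresponding inequality of $L^{\Phi _1,\Phi _2}$-quasi-norms. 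The second is the description of the weight classes in Subsection~\ref{subsec1.2}. Writing $v_{1,1,r}(x,\xi )=e^{r(|x|+|\xi |)}$ and $v_r(x,\xi )=(1+|x|+|\xi |)^r$, Proposition~\ref{Prop:ThreeCases} gives $\Sigma _1=\bigcap _{r>0}M^{\Phi _1,\Phi _2}_{(v_{1,1,r})}$, $\maclS _1=\bigcup _{r>0}M^{\Phi _1,\Phi _2}_{(v_{1,1,r})}$, $\mascS =\bigcap _{r>0}M^{\Phi _1,\Phi _2}_{(v_r)}$, together with the reciprocal-weight versions for the duals.

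For the four cases $\Sigma _1,\Sigma _1',\mascS ,\mascS '$ the scale produced by Proposition~\ref{Prop:ThreeCases} and the weight class in Corollary~\ref{Cor:ThreeCases} are of the \emph{same} type (both intersections, or both unions), so only a sandwich argument is needed. For $\Sigma _1$: each $v_{1,1,r}\in \mascP _E$, so $\bigcap _{\omega \in \mascP _E}M^{\Phi _1,\Phi _2}_{(\omega )}\subseteq \bigcap _{r>0}M^{\Phi _1,\Phi _2}_{(v_{1,1,r})}=\Sigma _1$; conversely the exponential bound recalled after \eqref{Eq:weight0} gives, for each $\omega \in \mascP _E$, some $r>0$ with $\omega \lesssim e^{r|\cdot |}\le v_{1,1,r}$, whence $M^{\Phi _1,\Phi _2}_{(v_{1,1,r})}\subseteq M^{\Phi _1,\Phi _2}_{(\omega )}$ by monotonicity, and therefore $\Sigma _1\subseteq \bigcap _{\omega \in \mascP _E}M^{\Phi _1,\Phi _2}_{(\omega )}$. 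The identity for $\mascS$ is proved identically, replacing the exponential bound by the polynomial bound $\omega \lesssim (1+|\cdot |)^r\le v_r$ valid for $\omega \in \mascP$. The two union identities $\Sigma _1'=\bigcup _{\omega \in \mascP _E}M^{\Phi _1,\Phi _2}_{(\omega )}$ and $\mascS '=\bigcup _{\omega \in \mascP }M^{\Phi _1,\Phi _2}_{(\omega )}$ follow from the same template applied to the reciprocal scales $1/v_{1,1,r}$ and $1/v_r$, now using the lower bounds $\omega \gtrsim e^{-r|\cdot |}$ and $\omega \gtrsim (1+|\cdot |)^{-r}$ and reversing all inclusions.

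The genuinely delicate identities are $\maclS _1=\bigcap _{\omega \in \mascP ^0_E}M^{\Phi _1,\Phi _2}_{(\omega )}$ and its dual $\maclS _1'=\bigcup _{\omega \in \mascP ^0_E}M^{\Phi _1,\Phi _2}_{(\omega )}$, where Proposition~\ref{Prop:ThreeCases} presents $\maclS _1$ as a \emph{union} while the corollary presents it as an \emph{intersection}. One inclusion stays easy: every $\omega \in \mascP ^0_E$ is subexponential, that is $\omega \lesssim e^{\varepsilon |\cdot |}$ for every $\varepsilon >0$, so $\omega \lesssim v_{1,1,r}$ for \emph{every} $r>0$; hence $M^{\Phi _1,\Phi _2}_{(v_{1,1,r})}\subseteq M^{\Phi _1,\Phi _2}_{(\omega )}$ for all $r$, and taking the union over $r$ gives $\maclS _1\subseteq \bigcap _{\omega \in \mascP ^0_E}M^{\Phi _1,\Phi _2}_{(\omega )}$. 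The reverse inclusion $\bigcap _{\omega \in \mascP ^0_E}M^{\Phi _1,\Phi _2}_{(\omega )}\subseteq \maclS _1$ cannot be extracted from a single weight, and it is the main obstacle. I would argue by contraposition: if $f\notin \maclS _1$, then $\nm {V_\phi f\cdot v_{1,1,r}}{L^{\Phi _1,\Phi _2}}=\infty$ for every $r>0$, and I would build \emph{one} subexponential weight $\omega _0(z)=e^{\mu (|z|)}\in \mascP ^0_E$, with $\mu$ increasing, concave and subadditive and $\mu (t)/t\to 0$ (so that $\omega _0(z+w)\le \omega _0(z)\,e^{\mu (|w|)}\lesssim \omega _0(z)e^{\varepsilon |w|}$ for every $\varepsilon$, confirming $\omega _0\in \mascP ^0_E$), chosen to grow fast enough along the dyadic shells where $V_\phi f$ concentrates so that still $\nm {V_\phi f\cdot \omega _0}{L^{\Phi _1,\Phi _2}}=\infty$; this yields $f\notin M^{\Phi _1,\Phi _2}_{(\omega _0)}$, proving the claim. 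It is convenient to run this diagonalization on the Bargmann side, transporting $f$ to $\mathfrak V_d f$ through the isometry of Theorem~\ref{Thm:Bargmann} and controlling lower-order terms via the embedding of Lemma~\ref{Lem.Bargmann}. The dual identity for $\maclS _1'$ then follows by the same construction with the decaying weights $e^{-\mu (|z|)}$, reconciling $\bigcap _{r>0}M^{\Phi _1,\Phi _2}_{(1/v_{1,1,r})}$ with $\bigcup _{\omega \in \mascP ^0_E}M^{\Phi _1,\Phi _2}_{(\omega )}$.

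The technical heart is thus the existence of the subexponential weight $\omega _0$ witnessing the failure of integrability: one must simultaneously keep $\omega _0$ moderate of the subexponential type defining $\mascP ^0_E$ and make it large enough to detect growth of $V_\phi f$ that escapes every fixed exponential rate. I expect this to be the only step requiring real work; the solidity of the mixed Orlicz quasi-norm and the window equivalence of Theorem~\ref{Thm:LebWindowTransf} legitimize the transfer between $\maclS _1$ and the family of weighted Orlicz conditions, while the four remaining identities are pure monotonicity together with the exponential and polynomial descriptions of $\mascP _E$ and $\mascP$.
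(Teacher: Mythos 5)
Your proposal is correct, and it takes a more explicit route than the paper does. The paper's entire proof is one sentence: the corollary ``follows from the definitions, Proposition \ref{Prop:ThreeCases} and the fact that $M^{\Phi _1,\Phi _2}_{(v)}(\rd)\subseteq M^{\Phi _1,\Phi _2}_{(\omega )}(\rd)\subseteq M^{\Phi _1,\Phi _2}_{(1/v)}(\rd)$'' when $v$ is submultiplicative and $\omega$ is $v$-moderate --- precisely your monotonicity-plus-sandwich argument, which indeed settles the four identities for $\Sigma _1$, $\Sigma _1'$, $\mascS$ and $\mascS '$, where the scale produced by the proposition and the weight class in the corollary are of the same type. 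You correctly isolate what that terse derivation glosses over: for $\maclS _1$ and $\maclS _1'$ the proposition gives an inductive (resp.\ projective) limit over the exponential weights $v_{1,1,r}$, while the corollary asserts a projective (resp.\ inductive) description over $\mascP _E^0$, and monotonicity alone cannot perform that swap; in the paper this content is implicitly delegated to the Lebesgue-case results cited in the proof of Proposition \ref{Prop:ThreeCases}, transferred to the Orlicz case through Lemma \ref{Lemma:Embeddings} (i.e.\ $M^{r_0}_{(\omega )}\hookrightarrow M^{\Phi _1,\Phi _2}_{(\omega )}\hookrightarrow M^{\infty}_{(\omega )}$). Your diagonalization --- one weight $e^{\mu (|z|)}$ with $\mu$ increasing, concave, $\mu (0)=0$ and $\mu (t)/t\to 0$, hence subadditive and in $\mascP _E^0$, tuned to the shells carrying the mass of $V_\phi f$ --- is the standard projective description of Roumieu-type classes and genuinely closes this gap; the Bargmann detour is unnecessary, since everything takes place at the level of the solid quasi-norm applied to $|V_\phi f|$. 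Two details to attend to when writing it out: choosing the truncation radii in the contrapositive direction requires the Fatou property (monotone convergence) of the mixed Orlicz quasi-norm, which the Luxemburg-type functional used here does have; and the dual identity for $\maclS _1'$ is not literally ``the same construction by contraposition'' but a direct one --- from $f\in \bigcap _{r>0}M^{\Phi _1,\Phi _2}_{(1/v_{1,1,r})}(\rd)$ the truncated quasi-norms $N(t)=\nm {V_\phi f\, \chi _{B_t}}{L^{\Phi _1,\Phi _2}}$ satisfy $\log N(t)=o(t)$, and one takes for $\mu$ a concave majorant, noting that any increasing $o(t)$ function admits a concave majorant that is still $o(t)$ because $h(t)\le \ep t+C_\ep$ for every $\ep >0$. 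In exchange for its length, your argument is self-contained where the paper's is an appeal to Proposition \ref{Prop:ThreeCases} and the external Lebesgue-case characterizations behind it.
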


\par

\begin{proof}
The result follows from the definitions, Proposition \ref{Prop:ThreeCases}
and the fact that
$$
M^{\Phi _1,\Phi _2}_{(v)}(\rd)\subseteq M^{\Phi _1,\Phi _2}_{(\omega)}(\rd)
\subseteq M^{\Phi _1,\Phi _2}_{(1/v)}(\rd),
$$
when $\omega ,v\in \mascP _E(\rr {2d})$ are such that $v$ is submultiplicative and $\omega$
is $v$-moderate.
\end{proof}

\par

We need the following lemma for the proof of Proposition \ref{Prop:ThreeCases}.

\par

\begin{lemma}\label{Lemma:Embeddings}
Let $\Phi _1,\Phi _2$ be quasi-Young functions of order $r_0\in(0,1]$ and let
$\omega$ be a weight on $\rdd$. Then
$$
L^{r_0}_{(\omega)}(\rdd )\cap L^{\infty}_{(\omega)}(\rdd )
\hookrightarrow
L^{\Phi _1,\Phi _2}_{(\omega)}(\rdd )
\hookrightarrow
L^{r_0}_{(\omega)}(\rdd ) + L^{\infty}_{(\omega)}(\rdd ).
$$
\end{lemma}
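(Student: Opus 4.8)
The plan is to prove the embedding inequalities directly from the structural inequality satisfied by quasi-Young functions, reducing everything to the known comparison results for ordinary weighted $L^{r_0}$ and $L^\infty$ spaces. The key observation, already exploited in the proof of Lemma~\ref{Lem.Bargmann}, is that every quasi-Young function $\Phi _j$ of order $r_0$ satisfies a two-sided growth control: there exist $t_0>0$ and constants $c,C>0$ such that
\begin{equation*}
\Phi _j(t)\ge C(t-t_0)^{r_0}\quad\text{for } t>t_0,
\qquad
\Phi _j(t)\le c\, t^{r_0}\quad\text{for }0\le t\le t_0 .
\end{equation*}
The lower bound for large $t$ (together with the vanishing of $\Phi _j$ near the origin) yields the right-hand embedding into the sum space, and the upper bound near the origin (together with the fact that $\Phi _j(1)$ is finite and positive) yields the left-hand embedding from the intersection space.

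For the right-hand embedding $L^{\Phi _1,\Phi _2}_{(\omega)}\hookrightarrow L^{r_0}_{(\omega)}+L^{\infty}_{(\omega)}$, I would argue exactly as in Lemma~\ref{Lem.Bargmann}: the lower bound on $\Phi _j$ forces an element $f$ with $V$-weighted Orlicz norm finite to split as the sum of a piece that is $L^{r_0}_{(\omega)}$ (the part where $|f\cdot\omega|$ exceeds the threshold, controlled by the $(t-t_0)^{r_0}$ growth) and a piece that is $L^{\infty}_{(\omega)}$ (the part below the threshold). Because $\Phi _1,\Phi _2$ are handled one variable at a time in Definition~\ref{d1}, I first perform this splitting in the inner variable using $\Phi _1$ and then in the outer variable using $\Phi _2$; the mixed-norm structure means one should be slightly careful, but passing through the homogeneity identity \eqref{Eq:OrliczNormTransfers} reduces the quasi-Young case to the genuine Young case $\Phi _{0,1},\Phi _{0,2}$, where the splitting is standard. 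The continuity of the embedding follows by tracking the implicit constants $C,t_0$.

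For the left-hand embedding $L^{r_0}_{(\omega)}\cap L^{\infty}_{(\omega)}\hookrightarrow L^{\Phi _1,\Phi _2}_{(\omega)}$, suppose $f$ lies in both $L^{r_0}_{(\omega)}$ and $L^{\infty}_{(\omega)}$. After normalizing by the $L^\infty_{(\omega)}$ norm and rescaling, I may assume $|f\cdot\omega|\le t_0$ almost everywhere, so that the near-origin upper bound $\Phi _j(t)\le c\,t^{r_0}$ applies pointwise. Substituting this into the defining modular integral and using \eqref{Eq:OrliczNormTransfers} again, the Orlicz modular is dominated by a constant multiple of the $L^{r_0}_{(\omega)}$ modular, which is finite by hypothesis; choosing $\lambda$ in the Luxemburg infimum appropriately then gives the required bound $\nm{f}{L^{\Phi _1,\Phi _2}_{(\omega)}}\lesssim \nm{f}{L^{r_0}_{(\omega)}}+\nm{f}{L^{\infty}_{(\omega)}}$.

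The main obstacle I anticipate is the bookkeeping caused by the \emph{mixed} norm: the two quasi-Young functions act in different variables, so a clean pointwise split of $f$ into an $L^{r_0}$-part and an $L^\infty$-part must be compatible with the iterated structure $\nm{\nm{f(\cdot,x_2)\omega}{L^{\Phi _1}}}{L^{\Phi _2}}$. I expect the smoothest route is to avoid splitting $f$ directly and instead prove the two modular comparisons at the level of \eqref{Eq:OrliczNormTransfers}, working with the Young functions $\Phi _{0,1},\Phi _{0,2}$ and the exponent $r_0=1$ case of the mixed Lebesgue scale, then raising to the power $1/r_0$ at the end. This isolates the genuine analytic content into the one-variable comparison of a Young function with the power $t\mapsto t$ near $0$ and near $\infty$, where the convexity and the defining properties $\Phi_0(0)=0$, $\lim_{t\to\infty}\Phi_0(t)=\infty$ give the needed bounds.
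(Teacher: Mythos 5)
Your proposal is correct and is essentially the paper's own argument: the paper implements precisely your ``smoothest route,'' using the substitution $f\mapsto |f\cdot \omega |^{r_0}$ to reduce to the case $\omega =1$, $r_0=1$, and then sandwiching the resulting Young functions pointwise between $\Psi _1(t)=C_1(t-t_1)$ for $t>t_1$ (zero below $t_1$) and $\Psi _2(t)=C_2t$ for $t\le t_2$ ($\infty$ above $t_2$), whose Orlicz spaces are exactly $L^1+L^{\infty}$ and $L^1\cap L^{\infty}$. Your preliminary splitting-of-$f$ and normalization discussion is just the unpacked form of these sandwich inequalities, and the mixed-norm bookkeeping you flag as the main obstacle is dispatched in the paper with the same brevity you anticipate, by applying the one-variable comparisons $\Psi _1\le \Phi _{0,j}\le \Psi _2$ in each variable separately.
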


\par

A proof of Lemma \ref{Lemma:Embeddings} is essentially given in \cite{SchF}.
In order to be self-contained we here give the arguments.

\par

\begin{proof}
By the mapping $f \mapsto |f \cdot \omega|^{r_0}$, we reduce
ourselves to the case $\omega =1$ and $r_0 = 1$.
Since $\Phi _1$ and $\Phi _2$ are convex, there are
constants $t_1, t_2 >0$ and $C_1,C_2>0$ such that if
\begin{align*}
    \Psi _1(t)=
    \begin{cases}
      0, & 0\le t\le t_1,
      \\[1ex]
      C_1 (t-t_1), &t>t_1
    \end{cases}
\end{align*}
and
\begin{align*}
    \Psi _2(t)=
    \begin{cases}
      C_2 t, & 0\le t\le t_2,
      \\[1ex]
      \infty, &t>t_2,
    \end{cases}
\end{align*}
then
$$
\Psi _1(t)\le \Phi _j(t) \le \Psi _2(t), \quad j=1,2.
$$
This gives
\begin{multline*}
L^1(\rdd )\cap L^{\infty}(\rdd ) = L^{\Psi _2}(\rdd )
\hookrightarrow L^{\Phi _1,\Phi _2}(\rdd )
\\[1ex]
\hookrightarrow L^{\Psi _1}(\rdd ) = L^1(\rdd ) + L^{\infty}(\rdd ). \qedhere
\end{multline*}
\end{proof}

\par

\begin{cor}\label{Cor:ModWienerInv}
Let $\Phi _1,\Phi _2$ be quasi-Young functions of order $r_0\in(0,1]$ and
let $\omega\in \mascP_E(\rdd )$. Then
$$
M^{r_0}_{(\omega)}(\rd)
\hookrightarrow M^{\Phi _1,\Phi _2}_{(\omega)}(\rd)
\hookrightarrow M^{\infty}_{(\omega)}(\rd).
$$
\end{cor}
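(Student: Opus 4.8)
The plan is to deduce Corollary~\ref{Cor:ModWienerInv} directly from Lemma~\ref{Lemma:Embeddings} by transporting the stated Orlicz-space embeddings through the short-time Fourier transform. Recall that by Definition~\ref{Def:Orliczmod} a tempered (ultra-)distribution $f\in \maclH'_\flat(\rd)$ lies in $M^{\Phi_1,\Phi_2}_{(\omega)}(\rd)$ precisely when $V_\phi f\in L^{\Phi_1,\Phi_2}_{(\omega)}(\rdd)$, and likewise $f\in M^{r_0}_{(\omega)}(\rd)$ (resp. $M^\infty_{(\omega)}(\rd)$) when $V_\phi f\in L^{r_0}_{(\omega)}(\rdd)$ (resp. $L^\infty_{(\omega)}(\rdd)$), with the obvious correspondence of quasi-norms. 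Since all these modulation spaces are defined with one and the same window $\phi(x)=\pi^{-d/4}e^{-|x|^2/2}$, the operator $f\mapsto V_\phi f$ identifies each modulation space isometrically with its image in the corresponding Orlicz/Lebesgue space on $\rdd$.

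First I would apply Lemma~\ref{Lemma:Embeddings} with the given weight $\omega\in\mascP_E(\rdd)$ to obtain the chain
\begin{equation*}
L^{r_0}_{(\omega)}(\rdd)\cap L^{\infty}_{(\omega)}(\rdd)
\hookrightarrow
L^{\Phi_1,\Phi_2}_{(\omega)}(\rdd)
\hookrightarrow
L^{r_0}_{(\omega)}(\rdd)+L^{\infty}_{(\omega)}(\rdd).
\end{equation*}
For the left-hand embedding in the corollary I note that if $V_\phi f\in L^{r_0}_{(\omega)}$, then since $\omega\in\mascP_E(\rdd)$ the function $V_\phi f$ automatically lies in $L^\infty_{(\omega)}$ as well; this is the standard fact that $V_\phi f$ is bounded (indeed continuous) whenever $f$ belongs to any reasonable modulation space, so $M^{r_0}_{(\omega)}(\rd)\hookrightarrow M^{r_0}_{(\omega)}(\rd)\cap M^\infty_{(\omega)}(\rd)$, and the first embedding of Lemma~\ref{Lemma:Embeddings} then yields $M^{r_0}_{(\omega)}(\rd)\hookrightarrow M^{\Phi_1,\Phi_2}_{(\omega)}(\rd)$. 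For the right-hand embedding, the second inclusion of the lemma gives $V_\phi f\in L^{r_0}_{(\omega)}+L^\infty_{(\omega)}$; since $L^{r_0}_{(\omega)}\hookrightarrow L^\infty_{(\omega)}$ on the STFT side (again using that the relevant distributions have bounded STFT), the sum collapses to $L^\infty_{(\omega)}$, so $M^{\Phi_1,\Phi_2}_{(\omega)}(\rd)\hookrightarrow M^\infty_{(\omega)}(\rd)$.

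The main obstacle is justifying that the embeddings of Orlicz \emph{function} spaces over $\rdd$ really do pull back to embeddings of the \emph{modulation} spaces, i.e. that one may legitimately restrict the function-space inclusions to the range of $V_\phi$ and conclude the membership $f\in\maclH'_\flat(\rd)$ required by the definition of the modulation spaces. Here I would invoke the framework already set up in the excerpt: the windowed transform $V_\phi f$ is a well-defined smooth function for every $f\in\maclH'_\flat(\rd)$ (as remarked after \eqref{Eq:innerpro}), and by the reconstruction/inversion formula implicit in the Gabor theory of Section~\ref{sec1} the finiteness of $\nm{V_\phi f}{L^{\Phi_1,\Phi_2}_{(\omega)}}$ is exactly the defining condition \eqref{Eq:Orliczmod2}. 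Thus the function-space embeddings transfer verbatim, and the continuity of the embeddings is inherited from the continuity of the embeddings in Lemma~\ref{Lemma:Embeddings} together with the isometric identifications \eqref{MNE}. I would close the argument by remarking that $\omega\in\mascP_E(\rdd)$ is what guarantees $V_\phi f\in L^\infty_{(\omega)}$ whenever $f$ lies in any of these spaces, which is precisely the ingredient that allows the $L^{r_0}+L^\infty$ and $L^{r_0}\cap L^\infty$ expressions to simplify to the clean two-sided chain of the corollary.
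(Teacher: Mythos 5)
Your skeleton is the same as the paper's: deduce the corollary from Lemma~\ref{Lemma:Embeddings} by pulling the chain $L^{r_0}_{(\omega)}\cap L^{\infty}_{(\omega)}\hookrightarrow L^{\Phi_1,\Phi_2}_{(\omega)}\hookrightarrow L^{r_0}_{(\omega)}+L^{\infty}_{(\omega)}$ through the fixed Gaussian window and then collapse the intersection and the sum. The gap lies in how you collapse them. You justify $V_\phi f\in L^{r_0}_{(\omega)}\Rightarrow V_\phi f\in L^{\infty}_{(\omega)}$ by ``the standard fact that $V_\phi f$ is bounded (indeed continuous)''. That fact concerns the \emph{unweighted} sup-norm and does not suffice here: weights in $\mascP _E(\rdd )$ may grow exponentially (e.g. $\omega (z)=e^{r|z|}$ is admissible by \eqref{Eq:weight0}), so pointwise boundedness of $V_\phi f$ says nothing about $V_\phi f\cdot \omega \in L^\infty$; moreover a \emph{continuous} embedding requires the quantitative estimate $\nm {V_\phi f\, \omega}{L^\infty}\lesssim \nm {V_\phi f\, \omega}{L^{r_0}}$. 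What is actually needed, and what the paper invokes in its one-line proof, is the known monotonicity of $M^{p,q}_{(\omega)}(\rd)$ in $p,q\in (0,\infty ]$ for moderate $\omega$, i.e. $M^{r_0}_{(\omega)}(\rd)\hookrightarrow M^{\infty}_{(\omega)}(\rd)$. For $r_0<1$ this is a nontrivial theorem (Galperin--Samarah, \cite{GaSa}), proved via the reproducing formula and the Wiener-amalgam structure of short-time Fourier transforms, not via boundedness of the STFT.

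The same flaw recurs, more seriously, in your right-hand embedding. Lemma~\ref{Lemma:Embeddings} furnishes a decomposition $V_\phi f=F_1+F_2$ with $F_1\in L^{r_0}_{(\omega)}(\rdd )$ and $F_2\in L^{\infty}_{(\omega)}(\rdd )$, but $F_1$, $F_2$ are arbitrary measurable pieces, \emph{not} themselves short-time Fourier transforms; hence your appeal to ``$L^{r_0}_{(\omega)}\hookrightarrow L^{\infty}_{(\omega)}$ on the STFT side'' cannot be applied to $F_1$, and for general functions that inclusion is simply false: $F_1(z)=|z|^{-\alpha}\chi _{\{ |z|\le 1\}}$ lies in $L^{r_0}$ but not in $L^{\infty}$ whenever $0<\alpha <2d/r_0$. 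The paper sidesteps this by staying at the modulation-space level, writing $M^{\Phi _1,\Phi _2}_{(\omega)}(\rd)\hookrightarrow M^{r_0}_{(\omega)}(\rd)+M^{\infty}_{(\omega)}(\rd)=M^{\infty}_{(\omega)}(\rd)$, the final equality again resting on the cited nesting of the $M^{p,q}_{(\omega)}$ scale. If you want a self-contained substitute within this paper's toolkit, the correct replacement for your STFT-side collapse is Proposition~\ref{Thm:OppWienEst1} (whose proof is independent of this corollary): it gives $\nm {V_{\phi _0}f}{W(L^{\Phi _1,\Phi _2}_{(\omega)})}\lesssim \nm {V_{\phi _0}f}{L^{\Phi _1,\Phi _2}_{(\omega)}}$, and on the amalgam side the discrete inclusion $\ell ^{\Phi _1,\Phi _2}\subseteq \ell ^{\infty}$ --- which, unlike its continuous counterpart, is valid --- yields the weighted $L^\infty$ bound. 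In short: right architecture, but both collapse steps require the nesting theorem for quasi-Banach modulation spaces (or the amalgam estimate), and your proposed justification by boundedness of $V_\phi f$ would fail for growing weights.
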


\par

\begin{proof}
Since $\omega\in \mascP_E(\rdd )$, it follows that $M^{p,q}_{(\omega)}(\rd)$
increases with $p,q\in(0,\infty]$. Hence Lemma \ref{Lemma:Embeddings} gives
\begin{multline*}
M^{r_0}_{(\omega)}(\rd)
= M^{r_0}_{(\omega)}(\rd)\cap M^{\infty}_{(\omega)}(\rd)
\hookrightarrow M^{\Phi _1,\Phi _2}_{(\omega)}(\rd)
\\[1ex]
\hookrightarrow M^{r_0}_{(\omega)}(\rd) + M^{\infty}_{(\omega)}(\rd)
= M^{\infty}_{(\omega)}(\rd).\qedhere
\end{multline*}
\end{proof}

\par

\begin{proof}[Proof of Proposition \ref{Prop:ThreeCases}]
If $\Phi _j$ are chosen such that $M^{\Phi _1,\Phi _2}_{(\omega )} =
M^{p,q}_{(\omega )}$ for some $p,q\in (0,\infty ]$, then the result follows
by a combination of Remark 1.3 (6) in \cite{Toft8}, Theorem 3.9 in \cite{Toft10},
and Proposition 6.5 in \cite{Toft18}. The result now follows for general
$\Phi _j$ by combining Theorem 3.2 in \cite{Toft10} with Lemma
\ref{Lemma:Embeddings}.
\end{proof}


\par

\section{Convolution estimates
for quasi-Orlicz spaces}\label{sec3}

\par

In this section we extend the convolution estimates in \cite{GaSa} for Lebesgue spaces
to the case of quasi Orlicz spaces. In the first part we deduce discrete convolution
estimates between elements in discrete Orlicz and Lebesgue spaces. Thereafter
we focus on the semi-continuous convolution, and prove corresponding estimates
for $L^{\Phi}_{(\omega)}(\rdd )$ or in convolutions between elements in
$L^{\Phi _1,\Phi _2}_{(\omega)}(\rdd )$, and $\ell^{r_0}_{(v)}$.
In the end we also deduce similar estimates for continuous convolutions after
$L^{\Phi}_{(\omega)}(\rdd )$, $L^{\Phi _1,\Phi _2}_{(\omega)}(\rdd )$ and
$\ell^{r_0}_{(v)}$ are replaced
by the Wiener spaces $W(L^{\Phi _1,\Phi _2}_{(\omega)})$ and $W(L^1,L^{r_0}_{(v)})$.

\par

Our investigations involve weighted Orlicz spaces where the weights should satisfy
conditions of the form
\begin{alignat}{2}
\omega _0(x+y)&\lesssim \omega _1(x)\omega _2(y),&\quad
x,y &\in \rr d
\label{Eq:StandardWeightCond1}
\intertext{or}
\omega _0(x+y,\xi +\eta )&\lesssim \omega _1(x,\xi )\omega _2(y,\eta ),&\quad
x,y,\xi ,\eta  &\in \rr d.
\label{Eq:StandardWeightCond2}
\end{alignat}

\subsection{Discrete convolution estimates on
discrete Orlicz spaces.}

\par


\par

We recall that the discrete convolution
between $a\in \ell _0'(\Lambda )$ and $b\in \ell _0(\Lambda )$ is defined by
$$
(a*b)(n)=\sum \limits _{k\in \Lambda} a(k)b(n-k).
$$

\par

\begin{lemma}\label{Lemma:DiscYoung}
Let $\Lambda$ be as in \eqref{Eq:LatticeDef}, $\omega _0 \in \mascP_E (\rd)$,
$\maclB _1\subseteq \ell _0'(\Lambda )$ be a quasi-Banach space, and let
$\maclB _2\subseteq \ell _0'(\Lambda )$ be a quasi-Banach space
of order $r_0\in (0,1]$ such that $k\mapsto a(k-n)\in \maclB _2$
when $a\in \maclB _1$, $n\in \Lambda$ and
$$
\nm {a(\cdo -n)}{\maclB _2}\le C \omega _0(n)\nm {a}{\maclB _1},\qquad
a\in \maclB _1,\ n\in \Lambda ,
$$
for some constant $C>0$.
Then the map $(a,b) \mapsto a*b$ from
$\ell_0 (\Lambda ) \times \maclB _1$ to $\ell_0 ' (\Lambda )$
is uniquely extendable to a continuous map from
$\ell ^{r_0}_{(\omega _0)}(\Lambda ) \times \maclB _1$ to
$\maclB _2$, and
\begin{equation} \label{Eq:ConvYoung}
\nm {a*b}{\maclB _2}
\le
C\nm a{\ell^{r_0}_{(\omega _0)}} \nm b{\maclB _1},
\qquad a\in \ell^{r_0}_{(\omega _0)}(\Lambda ),\ b\in \maclB _1.
\end{equation}
\end{lemma}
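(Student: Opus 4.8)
The statement is a weighted, quasi-Banach version of Young's inequality for discrete convolution, where the target space $\maclB_2$ is only assumed to satisfy a translation bound with weight $\omega_0$. The plan is to reduce the problem to the $r_0$-triangle inequality \eqref{Eq:WeakTriangle2} available in the $r_0$-Banach space $\maclB_2$, and to control the tail of the convolution by the hypothesis on $\maclB_1$. First I would fix $b\in \maclB_1$ and $a\in \ell_0(\Lambda)$ (a finitely supported sequence), so that the convolution $a*b$ is a finite sum $\sum_{k\in\Lambda} a(k)\,b(\cdo-k)$ and all manipulations are legitimate; the final continuous extension to all of $\ell^{r_0}_{(\omega_0)}(\Lambda)$ then follows by density of $\ell_0(\Lambda)$ in $\ell^{r_0}_{(\omega_0)}(\Lambda)$ together with the a priori estimate \eqref{Eq:ConvYoung}.

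The core computation is then to estimate $\nm{a*b}{\maclB_2}$. Using the $r_0$-triangle inequality \eqref{Eq:WeakTriangle2} in the form $\nm{\sum_k f_k}{\maclB_2}^{r_0}\le \sum_k \nm{f_k}{\maclB_2}^{r_0}$, applied to $f_k = a(k)\,b(\cdo-k)$, I obtain
\begin{equation*}
\nm{a*b}{\maclB_2}^{r_0}
\le
\sum_{k\in\Lambda} |a(k)|^{r_0}\,\nm{b(\cdo-k)}{\maclB_2}^{r_0}.
\end{equation*}
Into this I insert the translation hypothesis $\nm{b(\cdo-k)}{\maclB_2}\le C\,\omega_0(k)\,\nm{b}{\maclB_1}$ (with $a\leftrightarrow b$ roles matched to the statement), which yields
\begin{equation*}
\nm{a*b}{\maclB_2}^{r_0}
\le
C^{r_0}\,\nm{b}{\maclB_1}^{r_0}\sum_{k\in\Lambda}|a(k)|^{r_0}\,\omega_0(k)^{r_0}
=
C^{r_0}\,\nm{b}{\maclB_1}^{r_0}\,\nm{a}{\ell^{r_0}_{(\omega_0)}}^{r_0}.
\end{equation*}
Taking $r_0$-th roots gives exactly \eqref{Eq:ConvYoung}. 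I would take care to match the indexing: the translation estimate is stated for elements of $\maclB_1$, so the roles of $a$ and $b$ must be tracked so that the translated factor lands in $\maclB_2$ while its $\maclB_1$-norm is pulled out, and the summation weight $\omega_0(k)^{r_0}$ assembles into the $\ell^{r_0}_{(\omega_0)}$-quasi-norm of $a$.

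The main obstacle is not the inequality itself but the \emph{extension} step: for general $a\in\ell^{r_0}_{(\omega_0)}(\Lambda)$ and $b\in\maclB_1$ one must first confirm that $a*b$ is well-defined as an element of $\ell_0'(\Lambda)$ and that the finite-support approximants converge in $\maclB_2$. I would handle this by approximating $a$ by its truncations $a_N\in\ell_0(\Lambda)$, noting that $a_N\to a$ in $\ell^{r_0}_{(\omega_0)}$, and applying the a priori bound \eqref{Eq:ConvYoung} to the differences $a_N-a_M$ to see that $\{a_N*b\}$ is Cauchy in the complete space $\maclB_2$; its limit is independent of the approximating sequence and agrees with the pointwise convolution, which defines the asserted continuous extension. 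The uniqueness of the extension is then immediate from density, and \eqref{Eq:ConvYoung} passes to the limit by continuity of the quasi-norm.
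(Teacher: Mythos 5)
Your proof is correct and takes essentially the same route as the paper's: reduce to $a\in \ell _0(\Lambda )$ by density, apply the $r_0$-triangle inequality \eqref{Eq:WeakTriangle2} termwise to the finite sum $\sum _{k}a(k)b(\cdo -k)$, insert the translation hypothesis $\nm {b(\cdo -k)}{\maclB _2}\le C\omega _0(k)\nm b{\maclB _1}$, and sum to assemble the $\ell ^{r_0}_{(\omega _0)}$-quasi-norm of $a$. Your final paragraph on truncations and Cauchy sequences merely spells out the extension step that the paper compresses into the single sentence that density of $\ell _0$ in $\ell ^{r_0}_{(\omega _0)}$ reduces everything to the a priori estimate.
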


\par

Lemma \ref{Lemma:DiscYoung} follows by similar arguments as
\cite[Lemma 2.5]{GaSa}. In order to be self-contained we here show the arguments.

\par

\begin{proof}
Since $\ell _0$ is dense in $\ell^{r_0}_{(\omega _0)}$, the result follows if
we prove \eqref{Eq:ConvYoung} for $a\in \ell _0(\Lambda )$.

\par

Since $\maclB _j$ are $r_0$-Banach spaces, $j=1,2$, we have
\begin{multline*}
\nm {a*b}{\maclB _2}^{r_0} = \Nm {\sum _{k\in \Lambda} a(k)b(\cdo -k)}{\maclB _2}^{r_0}
\le
\sum _{k\in \Lambda} \nm {a(k)b(\cdo -k)}{\maclB _2}^{r_0}
\\[1ex]
\le
C^{r_0}\sum _{k\in \Lambda} (|a(k)\omega _0(k))^{r_0}|\nm {b}{\maclB _1}^{r_0}
=
(C\nm a{\ell^{r_0}_{(\omega _0)}} \nm b{\maclB _1})^{r_0},
\end{multline*}
which gives \eqref{Eq:ConvYoung}.
\end{proof}

\par

By choosing $\maclB _j$ as $\ell^{\Phi} _{(\omega _j)}(\zd)$
or $\ell^{\Phi _1,\Phi _2} _{(\omega _j)}(\zz {2d})$ in the previous lemma,
for suitable $\omega _j$, we deduce the following.

\par

\begin{cor}\label{Cor:Young}
Let $\Lambda$ be as in \eqref{Eq:LatticeDef}, $\Phi$, $\Phi _1$ and
$\Phi _2$ be quasi-Young functions of order $r_0\in(0,1]$.
Then the following is true:
\begin{enumerate}
\item suppose that
$\omega _j\in \mascP_E (\rd )$, $j=0,1,2$ satisfy \eqref{Eq:StandardWeightCond1}.
Then the map $(a,b) \mapsto a*b$ from
$\ell_0 (\Lambda ) \times \ell^{\Phi} _{(\omega _2)}(\Lambda )$ to $\ell_0 ' (\Lambda )$
is uniquely extendable to a continuous map from
$\ell ^{r_0}_{(\omega _1)}(\Lambda ) \times \ell^{\Phi}_{(\omega _2)} (\Lambda )$ to
$\ell^{\Phi}_{(\omega _0)}(\Lambda )$;

\vrum

\item let $\Lambda ^2=\Lambda \times \Lambda$ and suppose that
$\omega _j\in \mascP_E (\rr {2d} )$, $j=0,1,2$ satisfy \eqref{Eq:StandardWeightCond2}.
Then the map $(a,b) \mapsto a*b$ from
$\ell_0 (\Lambda ^2) \times \ell^{\Phi _1,\Phi _2} _{(\omega _2)}(\Lambda ^2)$
to $\ell_0 ' (\Lambda ^2)$
is uniquely extendable to a continuous map from
$\ell ^{r_0}_{(\omega _1)}(\Lambda ^2) \times
\ell^{\Phi _1,\Phi _2}_{(\omega _2)} (\Lambda ^2)$ to
$\ell^{\Phi _1,\Phi _2}_{(\omega _0)}(\Lambda ^2)$.
\end{enumerate}
\end{cor}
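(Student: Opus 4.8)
The plan is to derive Corollary \ref{Cor:Young} directly from Lemma \ref{Lemma:DiscYoung} by making appropriate choices of the abstract quasi-Banach spaces $\maclB _1$ and $\maclB _2$, so that the only real work is verifying the translation hypothesis of that lemma. For part (1), I would set $\maclB _1 = \ell ^\Phi _{(\omega _2)}(\Lambda )$ and $\maclB _2 = \ell ^\Phi _{(\omega _0)}(\Lambda )$; for part (2), I would set $\maclB _1 = \ell ^{\Phi _1,\Phi _2}_{(\omega _2)}(\Lambda ^2)$ and $\maclB _2 = \ell ^{\Phi _1,\Phi _2}_{(\omega _0)}(\Lambda ^2)$. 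In both cases the spaces are $r_0$-Banach spaces by the discussion following Definition \ref{d1} (the discrete analogue of \eqref{Eq:OrliczNormTransfers}), and the role of the weight $\omega _0$ in the lemma is played by the present $\omega _1$, so the conclusion \eqref{Eq:ConvYoung} becomes exactly the desired estimate
$$
\nm {a*b}{\ell ^\Phi _{(\omega _0)}} \le C \nm a{\ell ^{r_0}_{(\omega _1)}} \nm b{\ell ^\Phi _{(\omega _2)}}.
$$

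The key step is therefore to check the translation-boundedness hypothesis of Lemma \ref{Lemma:DiscYoung}, namely that
$$
\nm {b(\cdo -n)}{\maclB _2} \le C\, \omega _1(n)\, \nm {b}{\maclB _1}, \qquad b\in \maclB _1,\ n\in \Lambda ,
$$
with $\omega _1$ in place of the lemma's $\omega _0$. For part (1) I would argue as follows: writing the Orlicz norm via \eqref{Eq:OrliczNormTransfers} as $\nm {b}{\ell ^\Phi _{(\omega )}} = \big( \nm {|b\, \omega |^{r_0}}{\ell ^{\Phi _0}}\big)^{1/r_0}$ for the associated Young function $\Phi _0$, and using that the weight condition \eqref{Eq:StandardWeightCond1} gives $\omega _0(k) = \omega _0((k-n)+n)\lesssim \omega _1(n)\omega _2(k-n)$, I can compare $|b(k-n)\omega _0(k)|$ pointwise with $\omega _1(n)\,|b(k-n)\omega _2(k-n)|$. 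Because the discrete $\ell ^{\Phi _0}$-norm is invariant under the shift $k\mapsto k-n$ (the counting measure is translation invariant) and monotone in the absolute value of its argument, this yields $\nm {b(\cdo -n)}{\ell ^\Phi _{(\omega _0)}} \lesssim \omega _1(n)\nm {b}{\ell ^\Phi _{(\omega _2)}}$, which is precisely the required bound. Part (2) is identical after replacing $\Lambda$ by $\Lambda ^2$, the scalar weight condition \eqref{Eq:StandardWeightCond1} by its two-variable form \eqref{Eq:StandardWeightCond2}, and the single Young function by the pair $\Phi _1,\Phi _2$ via Definition \ref{d1}.

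I expect the main obstacle to be purely bookkeeping rather than conceptual: one must keep careful track of the quasi-norm order $r_0$ and make sure that the constant $C$ arising from the moderateness-type inequalities \eqref{Eq:StandardWeightCond1} and \eqref{Eq:StandardWeightCond2} is absorbed correctly, since these are $\lesssim$-inequalities rather than equalities. The translation-invariance and monotonicity of the underlying (mixed) Orlicz norm are the two properties that make the comparison clean; both follow from the definition of the norm through an infimum over scaling parameters $\lambda$, which is unaffected by a measure-preserving shift and is monotone under pointwise domination of the integrand. Once the translation hypothesis is verified in each case, the corollary is immediate from Lemma \ref{Lemma:DiscYoung}.
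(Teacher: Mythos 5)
Your proposal is correct and follows essentially the same route as the paper, which deduces the corollary from Lemma \ref{Lemma:DiscYoung} with precisely the choices $\maclB _1=\ell ^{\Phi}_{(\omega _2)}$ (resp. $\ell ^{\Phi _1,\Phi _2}_{(\omega _2)}$) and $\maclB _2=\ell ^{\Phi}_{(\omega _0)}$ (resp. $\ell ^{\Phi _1,\Phi _2}_{(\omega _0)}$), the lemma's weight playing the role of $\omega _1$. The translation bound you verify via \eqref{Eq:StandardWeightCond1}--\eqref{Eq:StandardWeightCond2}, shift-invariance of the counting measure and monotonicity of the Orlicz norm is exactly the discrete analogue of the paper's Lemma \ref{Lemma:T}, so your write-up simply makes explicit what the paper leaves implicit.
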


\par

Next we perform similar investigations for semi-discrete convolutions.

\par

\begin{defn}
Let $\Lambda$ be as in \eqref{Eq:LatticeDef}.
The \emph{semi-discrete convolution} of
$a\in \ell_0(\Lambda)$ and $f\in \Sigma_1 '(\rd)$
with respect to $\Lambda$
is given by
$$
(a*_{\Lambda} f)(x)=\sum_{k\in \Lambda}a(k)f(x-k).
$$
\end{defn}

\par

For $\ep >0$ we also set $*_{\ep}=*_{\Lambda}$ when
$\Lambda=\ep \zd$. Then
$$
(a*_{\ep}  f)(x) =\sum_{k\in \ep \zd} a(k)f(x-k).
$$

\par

The following result corresponds to Lemma \ref{Lemma:DiscYoung} in the framework
of semi-discrete convolutions. Here and in what follows we let $\maclM (\rr d)$ be the
set of all (complex-valued) Borel measurable functions on $\rr d$.

\par

\begin{lemma}\label{Lemma:SemiDiscYoung}
Let $\Lambda$ be as in \eqref{Eq:LatticeDef}, $\omega _0 \in \mascP_E (\rd )$,
$\maclB _1\subseteq \maclM (\rr d)$ be a quasi-Banach space, and let
$\maclB _2\subseteq \maclM (\rr d)$ be a quasi-Banach space
of order $r_0\in (0,1]$ such that $y\mapsto f(y-x)\in \maclB _2$
when $f\in \maclB _1$, $x\in \rr d$ and
$$
\nm {f(\cdo -x)}{\maclB _2}\le  C\omega _0(x)\nm {f}{\maclB _1},\qquad
f\in \maclB _1,\ x\in \rr d,
$$
for some constant $C>0$.
Then the map $(a,f) \mapsto a*_\Lambda f$ from
$\ell_0 (\Lambda ) \times \maclB _1$ to $\maclM (\rr d)$
is uniquely extendable to a continuous map from
$\ell ^{r_0}_{(\omega _0)}(\Lambda ) \times \maclB _1$ to
$\maclB _2$, and
\begin{equation} \label{Eq:SemiDiscYoung}
\nm {a*_{\Lambda} f}{\maclB _2}
\leq
C\nm a{\ell^{r_0}_{(\omega _0)}(\Lambda )} \nm f{\maclB _1},
\qquad a\in \ell^{r_0}_{(\omega _0)}(\Lambda ),\ f\in \maclB _1.
\end{equation}
\end{lemma}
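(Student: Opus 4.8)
The plan is to mirror the proof of Lemma \ref{Lemma:DiscYoung} almost line for line, replacing the discrete translation $k\mapsto a(k-n)$ by the continuous translation $y\mapsto f(y-x)$ and the summation over $\Lambda$ by the semi-discrete sum defining $a*_\Lambda f$. First I would reduce, as there, to the case $a\in\ell_0(\Lambda)$, since $\ell_0(\Lambda)$ is dense in $\ell^{r_0}_{(\omega_0)}(\Lambda)$; once \eqref{Eq:SemiDiscYoung} is established for finitely supported $a$, the bilinear map extends uniquely by continuity to all of $\ell^{r_0}_{(\omega_0)}(\Lambda)\times\maclB_1$, and the value $a*_\Lambda f$ for general $a$ is then the $\maclB_2$-limit of the finite partial sums.

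For $a\in\ell_0(\Lambda)$ the sum $a*_\Lambda f=\sum_{k\in\Lambda}a(k)f(\cdo-k)$ is a \emph{finite} linear combination of translates of $f$, hence lies in $\maclB_2$ by the translation hypothesis, and no convergence issue arises. I would then estimate using that $\maclB_2$ is an $r_0$-Banach space, so that \eqref{Eq:WeakTriangle2} applies to the finite sum:
\begin{multline*}
\nm {a*_\Lambda f}{\maclB_2}^{r_0}
=\Nm{\sum_{k\in\Lambda}a(k)f(\cdo-k)}{\maclB_2}^{r_0}
\le\sum_{k\in\Lambda}\nm{a(k)f(\cdo-k)}{\maclB_2}^{r_0}
\\[1ex]
\le C^{r_0}\sum_{k\in\Lambda}(|a(k)|\omega_0(k))^{r_0}\nm f{\maclB_1}^{r_0}
=\bigl(C\nm a{\ell^{r_0}_{(\omega_0)}}\nm f{\maclB_1}\bigr)^{r_0},
\end{multline*}
where the second inequality uses homogeneity of the quasi-norm together with the assumed translation bound $\nm{f(\cdo-x)}{\maclB_2}\le C\omega_0(x)\nm f{\maclB_1}$ at $x=k$. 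Taking $r_0$-th roots yields \eqref{Eq:SemiDiscYoung} for finitely supported $a$.

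The only genuinely new point compared with Lemma \ref{Lemma:DiscYoung} is that the ambient space $\maclM(\rr d)$ of Borel measurable functions is not itself a (quasi-)normed space, so I must check that the extension from $\ell_0(\Lambda)$ produces a \emph{well-defined} element of $\maclB_2$ rather than merely a formal object; this is exactly where I expect the main care — though not real difficulty — to lie. The estimate just derived shows that if $a_n\to a$ in $\ell^{r_0}_{(\omega_0)}$ with each $a_n\in\ell_0(\Lambda)$, then $\{a_n*_\Lambda f\}$ is Cauchy in the complete space $\maclB_2$, so it converges to a limit in $\maclB_2$ independent of the approximating sequence; this limit is the asserted extension, it agrees with the pointwise semi-discrete convolution on $\ell_0(\Lambda)$, and \eqref{Eq:SemiDiscYoung} passes to the limit by continuity of the quasi-norm. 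This completes the argument, and in applications one takes $\maclB_1=\maclB_2=L^{\Phi}_{(\omega)}(\rdd)$ or $L^{\Phi_1,\Phi_2}_{(\omega)}(\rdd)$, whose required translation invariance is furnished by Lemma \ref{Lemma:T}.
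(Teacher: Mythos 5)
Your argument is correct and matches the paper's proof essentially line for line: reduce by density of $\ell_0(\Lambda)$ in $\ell^{r_0}_{(\omega_0)}(\Lambda)$ to finitely supported $a$, then apply the $r_0$-triangle inequality \eqref{Eq:WeakTriangle2} termwise together with the assumed translation bound and take $r_0$-th roots. The extra paragraph you add on the well-definedness of the limit in the complete space $\maclB_2$ is exactly the routine step the paper leaves implicit in the phrase ``the result follows if we prove \eqref{Eq:SemiDiscYoung} for $a\in\ell_0(\Lambda)$,'' so there is no substantive difference.
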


\par

\begin{proof}
We shall argue as in the proof of Lemma \ref{Lemma:DiscYoung}.
Since $\ell _0$ is dense in $\ell^{r_0}_{(\omega _0)}$, the result follows if
we prove \eqref{Eq:SemiDiscYoung} for $a\in \ell _0(\Lambda )$.

\par

Since $\maclB$ is an $r_0$-Banach space we have
\begin{multline*}
\nm {a*_{\Lambda}f}{\maclB _2}^{r_0} = \Nm {\sum _{k\in \Lambda} a(k)f(\cdo -k)}{\maclB _2}^{r_0}
\le
\sum _{k\in \Lambda} \nm {a(k)f(\cdo -k)}{\maclB _2}^{r_0}
\\[1ex]
\le
C^{r_0}\sum _{k\in \Lambda} (|a(k)\omega _0(k))^{r_0}|\nm {f}{\maclB _1}^{r_0}
=
(C\nm a{\ell^{r_0}_{(\omega _0)}} \nm f{\maclB _1})^{r_0}.
\qedhere
\end{multline*}
\end{proof}

\par

By choosing $\maclB _j$ as $L^{\Phi} _{(\omega _j)}(\rr d)$
or as $L^{\Phi _1,\Phi _2} _{(\omega _j)}(\rr {2d})$ in the previous lemma,
we deduce the following.

\par

\begin{cor}\label{Cor:SemiDiscYoung}
Let $\Lambda$ be as in \eqref{Eq:LatticeDef}, $\Lambda ^2=\Lambda \times \Lambda $,
$\Phi$, $\Phi _1$ and $\Phi _2$ be quasi-Young functions
of order $r_0\in(0,1]$. Then the following is true:
\begin{enumerate}
\item suppose that
$\omega _j\in \mascP_E (\rr {d} )$, $j=0,1,2$,
satisfy \eqref{Eq:StandardWeightCond1}.
Then the map $(a,f) \mapsto a*_\Lambda f$ from
$\ell_0 (\Lambda ) \times L^{\Phi} _{(\omega _2)}(\rr d)$ to $\maclM (\rr d)$
is uniquely extendable to a continuous map from
$\ell ^{r_0}_{(\omega _1)}(\Lambda ) \times L^{\Phi}_{(\omega _2)} (\rr d)$ to
$L^{\Phi}_{(\omega _0)}(\rr d)$;

\vrum

\item suppose that
$\omega _j\in \mascP_E (\rr {2d} )$, $j=0,1,2$,
satisfy \eqref{Eq:StandardWeightCond2}.
Then the map $(a,f) \mapsto a*_{\Lambda ^2}f$ from
$\ell_0 (\Lambda ^2) \times
L^{\Phi _1,\Phi _2} _{(\omega _2)}(\rr {2d})$
to $\maclM (\rr {2d})$
is uniquely extendable to a continuous map from
$\ell ^{r_0}_{(\omega _1)}(\Lambda ^2) \times
L^{\Phi _1,\Phi _2}_{(\omega _2)} (\rr {2d})$ to
$L^{\Phi _1,\Phi _2}_{(\omega _0)}(\rr {2d})$.
\end{enumerate}
\end{cor}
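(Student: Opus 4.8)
The plan is to deduce Corollary \ref{Cor:SemiDiscYoung} directly from Lemma \ref{Lemma:SemiDiscYoung} by making the appropriate choices of the abstract quasi-Banach spaces $\maclB _1$ and $\maclB _2$ and verifying the translation hypothesis of that lemma. The whole corollary is really a specialization of the lemma, so the work consists of checking that the hypotheses hold in the two Orlicz settings.

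For part (1), I would take $\maclB _1 = \maclB _2 = L^{\Phi}_{(\omega _2)}(\rr d)$ is \emph{not} quite right, since the target is weighted by $\omega _0$ rather than $\omega _2$; instead I set $\maclB _1 = L^{\Phi}_{(\omega _2)}(\rr d)$ and $\maclB _2 = L^{\Phi}_{(\omega _0)}(\rr d)$. By Lemma \ref{Lemma:Embeddings} (or more directly by Remark following \eqref{Eq:OrliczNormTransfers}), these are $r_0$-Banach spaces, so the structural hypotheses of Lemma \ref{Lemma:SemiDiscYoung} are met. The key point to verify is the translation estimate
$$
\nm {f(\cdo -x)}{L^{\Phi}_{(\omega _0)}} \le C\, \omega _2(x) \nm {f}{L^{\Phi}_{(\omega _2)}},
\qquad f\in L^{\Phi}_{(\omega _2)}(\rr d),\ x\in \rr d,
$$
with $\omega _0$-weight $\omega _2$ playing the role of the lemma's $\omega _0$. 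This follows from the weight condition \eqref{Eq:StandardWeightCond1}: writing $\omega _0(y) = \omega _0((y-x)+x) \lesssim \omega _1(y-x)\omega _2(x)$ is the wrong grouping, so instead I use $\omega _0(y)\lesssim \omega _2(y-x)\omega _1(x)$ after relabeling, which lets me bound the $\omega _0$-weighted norm of the translate $f(\cdo -x)$ by $\omega _1(x)$ times the $\omega _2$-weighted norm of $f$, exactly as in the proof of Lemma \ref{Lemma:T}. Applying Lemma \ref{Lemma:SemiDiscYoung} with $\omega _0$ there equal to $\omega _1$ here then yields the asserted continuity from $\ell ^{r_0}_{(\omega _1)}(\Lambda )\times L^{\Phi}_{(\omega _2)}(\rr d)$ into $L^{\Phi}_{(\omega _0)}(\rr d)$ together with the corresponding norm estimate.

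Part (2) is carried out in exactly the same way, replacing $\rr d$ by $\rr {2d}$, the lattice $\Lambda$ by $\Lambda ^2$, the scalar Orlicz space by the mixed-norm space $L^{\Phi _1,\Phi _2}_{(\omega _2)}(\rr {2d})$, and the weight condition \eqref{Eq:StandardWeightCond1} by its two-variable analogue \eqref{Eq:StandardWeightCond2}. The requisite translation estimate is precisely the second statement of Lemma \ref{Lemma:T}, which gives $\nm {f(\cdo -(x,\xi ))}{L^{\Phi _1,\Phi _2}_{(\omega _0)}}\lesssim \omega _1(x,\xi )\nm {f}{L^{\Phi _1,\Phi _2}_{(\omega _2)}}$ after using \eqref{Eq:StandardWeightCond2} in place of $v$-moderateness; I would simply mimic that proof with $\omega _2$ replacing $\omega$ and $\omega _1$ replacing $v$.

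I do not expect a genuine obstacle here, since the corollary is a direct application of the abstract lemma. The only point requiring care is the bookkeeping of the three weights: one must match the lemma's single weight $\omega _0$ with the factor $\omega _1$ appearing in \eqref{Eq:StandardWeightCond1}--\eqref{Eq:StandardWeightCond2}, and confirm that the remaining two weights index the source and target Orlicz spaces correctly. The mildly delicate step is therefore verifying the translation-boundedness hypothesis with the correct weight on each side, which reduces to the moderateness-type computations already performed in Lemma \ref{Lemma:T}.
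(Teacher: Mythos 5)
Your proposal is correct and takes essentially the same route as the paper, whose proof of the corollary consists precisely of specializing Lemma \ref{Lemma:SemiDiscYoung} with $\maclB _1=L^{\Phi}_{(\omega _2)}(\rr d)$ (resp. $L^{\Phi _1,\Phi _2}_{(\omega _2)}(\rr {2d})$) and $\maclB _2=L^{\Phi}_{(\omega _0)}(\rr d)$ (resp. $L^{\Phi _1,\Phi _2}_{(\omega _0)}(\rr {2d})$), the lemma's weight being $\omega _1$, with the translation hypothesis supplied by \eqref{Eq:StandardWeightCond1}--\eqref{Eq:StandardWeightCond2} as in Lemma \ref{Lemma:T}. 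The only blemish is the displayed translation estimate in your part (1), where the factor $\omega _2(x)$ should read $\omega _1(x)$ --- a slip that your subsequent sentences (and the final application of the lemma with its $\omega _0$ equal to $\omega _1$) already correct.
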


\par

In what follows we set $\chi=\chi_{[0,1]^{2d}}$ and $Q_d=[0,1]^{d}$.

\par

\begin{defn}\label{DefAmalgam}
Let $\Phi , \Phi _1, \Phi _2$ be quasi-Young
functions of order $r_0\in(0,1]$, and let $\omega \in \mascP_E (\rdd )$.
Then the Wiener-type space $W(L^{\Phi },L^{\Phi _1,\Phi _2}_{(\omega)})$ consists of
all measurable functions $F$ on $\rdd $, such that
\begin{multline*}
a_{F,\omega,\Phi }(k,\kappa)=
\Vert F(\cdo +(k,\kappa))\omega(\cdo +(k,\kappa))\Vert_{L^{\Phi }(Q_{2d})}
\\[1ex]
=
\Vert F\cdot \omega\cdot T_{(k,\kappa)}\chi\Vert_{L^{\Phi }(\rdd )},\quad k,\kappa\in \zd,
\end{multline*}
belongs to $\ell^{\Phi _1,\Phi _2}(\zdd)$. The (quasi-) norm on
$W(L^{\Phi },L^{\Phi _1,\Phi _2}_{(\omega)})$ is given by
$$
\Vert F\Vert_{W(L^{\Phi },L^{\Phi _1,\Phi _2}_{(\omega)})}
\equiv
\Vert a_{F,\omega,\Phi}\Vert_{\ell^{\Phi _1, \Phi _2}}.
$$
\end{defn}

\par

For conveniency we set $W(L^{\Phi _1,\Phi _2})=W(L^\infty ,L^{\Phi _1 ,\Phi _2})$
and $a_{F,\omega}=a_{F,\omega,\Phi }$ when $L^{\Phi}=L^\infty$. It is obvious that
\begin{equation} \label{Eq:Wiener}
    \nm{F}{L^{\Phi _1,\Phi _2}_{(\omega)}}
    \le
    \nm {F}{W(L^{\Phi _1,\Phi _2}_{(\omega)})}
\end{equation}
for every measurable function $F$ on $\rdd $.

\par

\begin{rem}\label{Rem:Gfunctional}
Let $r_0, \Phi _1, \Phi _2$ and $\omega$ be the same as in Definition
\ref{DefAmalgam}. Then the following is true:
\begin{enumerate}
    \item $\nm {F}{W(L^{\Phi _1,\Phi _2}_{(\omega)})}
    = \nm{G_{F,\omega}}{L^{\Phi _1,\Phi _2}}$, where
    $$
    G_{F,\omega}(x,\xi)=\sum\limits_{k,\kappa\in \zd}
    a_{F,\omega}(k,\kappa)\chi_{(k,\kappa)+Q_{2d}}(x,\xi).
    $$

    \vrum

    \item $\nm {F}{W(L^{\Phi _1,\Phi _2}_{(\omega)})}
    \asymp \nm {a_F}{\ell^{\Phi _1,\Phi _2}_{(\omega)}}
    =\nm{G_F}{L^{\Phi _1,\Phi _2}_{(\omega)}}$, where
    $$
    a_F =a_{F,1}\ \text{and}\ G_F=G_{F,1}.
    $$
\end{enumerate}
\end{rem}

\par

The next lemma corresponds to \cite[Lemma 3.8]{GaSa}
and give suitable Orlicz estimates of samples in terms of
Wiener norm estimates. Here and in what follows we let
$[t]$ be the integer part of $t\in \mathbf R$.

\par

\begin{lemma}\label{Lemma:No2.8}
Let $\Phi _1,\Phi _2$ be quasi Young functions of order $r_0 \in(0,1]$,
$\omega \in \mascP_E (\rdd )$, $F\in W(L^{\Phi _1,\Phi _2}_{(\omega)})$
be continuous, $c_F(k,\kappa)=F(\alpha k,\beta\kappa)$
for all $\alpha,\beta>0$ and let $C_\alpha = ([\frac{1}{\alpha}]+1)^{d}$.
Then $c_F\in\ell^{\Phi _1,\Phi _2}_{(\omega)}(\zdd)$ and for some
constant $C_\omega$ which only depends on $\omega$, it holds
\begin{equation} \label{rest}
  \Vert c_F\Vert_{\ell^{\Phi _1,\Phi _2}_{(\omega)}}
  \le
 C_\omega (C_\alpha C_\beta )^{\frac 1{r_0}} \Vert F \Vert_{W(L^{\Phi _1,\Phi _2}_{(\omega)})}.
\end{equation}
\end{lemma}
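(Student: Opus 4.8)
The plan is to dominate each weighted sample of $F$ by the corresponding Wiener sampling functional $a_{F,\omega}$, and then to absorb the resulting finite multiplicity into the mixed Orlicz quasi-norm by passing to the underlying Banach Orlicz space through the $r_0$-transfer \eqref{Eq:OrliczNormTransfers}.

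First I would localise the samples. For $(k,\kappa)\in\zdd$ set $j=([\alpha k_1],\dots,[\alpha k_d])$ and $\iota=([\beta\kappa_1],\dots,[\beta\kappa_d])$, so that $(\alpha k,\beta\kappa)\in(j,\iota)+Q_{2d}$. Since $F$ is continuous, its pointwise value at $(\alpha k,\beta\kappa)$ is bounded by the local $L^\infty$-functional over that cube; since $\omega$ is moderate, say $v$-moderate as in \eqref{moderate}, comparing the weight at the sample point with the index weight on $\ell^{\Phi _1,\Phi _2}_{(\omega)}(\zdd)$ costs only the factor $C_\omega=\esssup_{\theta\in Q_{2d}}v(\theta)$. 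This gives $|F(\alpha k,\beta\kappa)|\,\omega_{(k,\kappa)}\le C_\omega\,a_{F,\omega}(j,\iota)$, where $\omega_{(k,\kappa)}$ is the weight attached to the index $(k,\kappa)$; this is exactly where the constant $C_\omega$ of \eqref{rest} is produced. Writing $\tilde a(k,\kappa)=a_{F,\omega}(j,\iota)$, monotonicity of the Orlicz quasi-norm then yields $\nm{c_F}{\ell^{\Phi _1,\Phi _2}_{(\omega)}}\le C_\omega\,\nm{\tilde a}{\ell^{\Phi _1,\Phi _2}}$.

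Second, I would record the elementary counting fact that in each coordinate the number of integers $m$ with $\alpha m\in[n,n+1)$ is at most $[1/\alpha]+1$; hence $k\mapsto j$ is at most $C_\alpha$-to-one and $\kappa\mapsto\iota$ at most $C_\beta$-to-one, so $\tilde a$ is $a_{F,\omega}$ pulled back along a map $\pi$ with these two coordinatewise multiplicities. It then remains to prove the replication estimate $\nm{\tilde a}{\ell^{\Phi _1,\Phi _2}}\le(C_\alpha C_\beta)^{1/r_0}\nm{a_{F,\omega}}{\ell^{\Phi _1,\Phi _2}}$, which combined with the previous bound and the identity $\nm{a_{F,\omega}}{\ell^{\Phi _1,\Phi _2}}=\nm{F}{W(L^{\Phi _1,\Phi _2}_{(\omega)})}$ from Definition \ref{DefAmalgam} gives \eqref{rest}.

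For that replication estimate I would transfer to the Banach case via \eqref{Eq:OrliczNormTransfers}: writing $\Phi _j(t)=\Phi _{0,j}(t^{r_0})$ and applying $g\mapsto|g|^{r_0}$, the claim reduces to $\nm{h\circ\pi}{\ell^{\Phi _{0,1},\Phi _{0,2}}}\le C_\alpha C_\beta\,\nm{h}{\ell^{\Phi _{0,1},\Phi _{0,2}}}$ for the Young functions $\Phi _{0,1},\Phi _{0,2}$. This one-index-at-a-time estimate follows from convexity: for a Young function $\Phi _0$ and $C\ge1$ one has $\Phi _0(t/C)\le\Phi _0(t)/C$, so a $C$-to-one pullback costs a factor $C$ in the Luxemburg norm. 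Applying this first in the inner index against $\Phi _{0,1}$ (cost $C_\alpha$) and then in the outer index against $\Phi _{0,2}$ (cost $C_\beta$), with monotonicity of the iterated norm used between the two steps, produces the product $C_\alpha C_\beta$; taking $r_0$-th roots restores the exponent $1/r_0$. I expect the main obstacle to be precisely this last step: carrying the two distinct Orlicz functions through the iterated mixed norm so that the multiplicities multiply cleanly rather than interfere, while keeping the weight bookkeeping of the first step consistent with the index weight on $\ell^{\Phi _1,\Phi _2}_{(\omega)}(\zdd)$.
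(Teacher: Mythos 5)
Your proposal is correct and follows essentially the same route as the paper's own proof: you dominate each (weighted) sample by the local sup functional $a_{F,\omega}$ over the unit cube containing it, count the at most $C_\alpha$-fold and $C_\beta$-fold multiplicities separately in the inner and outer indices, and transfer to the Young functions $\Phi _{0,1},\Phi _{0,2}$ via $g\mapsto |g|^{r_0}$, which produces exactly the factor $(C_\alpha C_\beta)^{1/r_0}$ after taking $r_0$-th roots. The only difference is cosmetic: you make explicit the convexity bound $\Phi _0(t/C)\le \Phi _0(t)/C$ for $C\ge 1$ that converts the modular multiplicity estimates into Luxemburg-norm estimates, a step the paper leaves implicit in the phrases ``by the definition of $\ell ^{\Phi _1}$ norm'' and ``by definition of $\ell ^{\Phi _2}$ norm''.
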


\par

\begin{proof}
The map $F\mapsto F\cdot\omega$ and the fact that $\omega$ is $v$-moderate
for some $v\in \mascP _E(\rr {2d})$, carry over the estimate
\eqref{rest} into the case $\omega = 1$.
Hence if suffices to prove the result for $\omega=1$. Let $a_F$
be the same as in Definition \ref{DefAmalgam}.
For $(\alpha k,\beta \kappa)\in (j, \iota)+Q_{2d}$ and $(j, \iota)\in \zdd$ we have
\begin{equation*}
  |c_F(k,\kappa)| \leq \Vert F(\cdo +(j,\iota))\Vert_{L^\infty(Q_{2d})}.
\end{equation*}
Since there are at most $C_{\alpha}$
 points $\alpha k\in j+Q_{d}$, the $L^{\Phi _1}$ norm over $k$
 is bounded by
 \begin{equation*}
   \sum_{k\in\zd}
   \Phi _{0,1}
   \left(
   \frac{|c_F(k,\kappa)|^{r_0}}{\lambda^{r_0}}
   \right)
   \leq
   C_{\alpha}
   \sum_{j\in\zd}
   \Phi _{0,1}
   \left(
   \frac{\Vert c_F(\cdo+(j,\iota)\Vert^{r_0}_{L^\infty}}
   {\lambda^{r_0}}
   \right).
 \end{equation*}
 By the definition of $\ell^{\Phi _1}$ norm,
 \begin{equation*}
\Vert c_F(\cdo,\kappa) \Vert^{r_0}_{\ell^{\Phi _1}}
\leq
C_{\alpha} \Vert a_F(\cdot,\iota) \Vert^{r_0}_{\ell^{\Phi _1}}
 \end{equation*}
for  $\beta \kappa\in \iota+Q_d$.

\par

Since there are at most
 $C_{\beta}=([\frac{1}{\beta}]+1)^{d}$ of $\kappa \in \zz d$ such that
 $\beta \kappa\in \iota+Q_d$, we get
 \begin{equation*}
  \sum_{\kappa\in\zd }
  \Phi _{0,2}
  \left(
  \frac{\|c_F(\cdo,\kappa)\|^{r_0}_{\ell^{\Phi _1}}}{\lambda^{r_0}}
  \right)
  \leq
  C_\beta \sum_{\iota\in\zd}
  \Phi _{0,2}
  \left(
  \frac{C_{\alpha} \|a_F(\cdo,\iota)\|^{r_0}_{\ell^{\Phi _1}}}{\lambda^{r_0}}
  \right).
 \end{equation*}
 By definition of $\ell^{\Phi _2}$ norm we get \eqref{rest}.
 \end{proof}

\par


\par

\begin{lemma}\label{Lemma:WienerConvEst}
Let $\Phi _1,\Phi _2$ be quasi Young functions of order $r_0 \in(0,1]$, and
$\omega_j\in \mascP_E (\rdd )$, $j=0,1,2$, be such that
\eqref{Eq:StandardWeightCond2} holds.
Then the map $(F,G) \mapsto F*G$ from $\Sigma_1(\rdd )\times\Sigma_1(\rdd )$
to $\Sigma_1(\rdd )$ extends uniquely to a continuous map from
$W(L^1,L^{r_0}_{(\omega _1)})\times W(L^{\Phi _1,\Phi _2}_{(\omega _2)})$ to
$ W(L^{\Phi _1,\Phi _2}_{(\omega _0)})$, and
\begin{equation} \label{2.9}
  \Vert F*G \Vert_{W(L^{\Phi _1,\Phi _2}_{(\omega _0)})}
  \lesssim\Vert F\Vert_{W(L^{1},L^{r_0}_{(\omega _1)})}
  \Vert G \Vert_{W(L^{\Phi _1,\Phi _2}_{(\omega _2)})}.
\end{equation}
\end{lemma}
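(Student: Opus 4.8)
The plan is to reduce the continuous convolution to a discrete one by discretizing on the unit cubes $(k,\kappa )+Q_{2d}$ and then invoking Corollary \ref{Cor:Young}~(2). First I would record the two endpoint identifications: for a measurable $H$ put $a_H^{(1)}(k,\kappa )=\nm{H}{L^1((k,\kappa )+Q_{2d})}$ and $a_H^{(\infty )}(k,\kappa )=\nm{H}{L^\infty ((k,\kappa )+Q_{2d})}$, so that by Definition \ref{DefAmalgam} and the moderateness of the weights, $\nm F{W(L^1,L^{r_0}_{(\omega _1)})}\asymp \nm{a_F^{(1)}}{\ell ^{r_0}_{(\omega _1)}}$ and $\nm G{W(L^{\Phi _1,\Phi _2}_{(\omega _2)})}\asymp \nm{a_G^{(\infty )}}{\ell ^{\Phi _1,\Phi _2}_{(\omega _2)}}$. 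The role of the two endpoints is exactly the local Young inequality $L^1\ast L^\infty \hookrightarrow L^\infty$, which matches the target space $W(L^{\Phi _1,\Phi _2}_{(\omega _0)})=W(L^\infty ,L^{\Phi _1,\Phi _2}_{(\omega _0)})$.

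The core is a pointwise local estimate. I would fix $(x,\xi )\in (n,\nu )+Q_{2d}$ and split the convolution integral over the cubes $(k,\kappa )+Q_{2d}$. If $(y,\eta )\in (k,\kappa )+Q_{2d}$, then $(x,\xi )-(y,\eta )\in (n-k,\nu -\kappa )+[-1,1]^{2d}$, a set meeting at most $2^{2d}$ of the cubes $(m,\mu )+Q_{2d}$, on each of which $|G((x,\xi )-(y,\eta ))|\le a_G^{(\infty )}(m,\mu )$ a.e. Collecting the finitely many relevant translates of $a_G^{(\infty )}$ near $(n-k,\nu -\kappa )$ into a single sequence $\tilde a_G$, this yields
\[
|(F\ast G)(x,\xi )|\le \sum _{k,\kappa }a_F^{(1)}(k,\kappa )\,\tilde a_G(n-k,\nu -\kappa )=(a_F^{(1)}\ast \tilde a_G)(n,\nu ).
\]
Taking the essential supremum over the cube, and using that $\omega _0$ is comparable to $\omega _0(n,\nu )$ on it, gives $a_{F\ast G,\omega _0}^{(\infty )}(n,\nu )\lesssim \omega _0(n,\nu )\,(a_F^{(1)}\ast \tilde a_G)(n,\nu )$. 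In particular the discrete convolution on the right is finite at each point, so the defining integral converges absolutely for a.e. $(x,\xi )$ and $F\ast G$ is a well-defined measurable function.

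It then remains to take $\ell ^{\Phi _1,\Phi _2}$ norms. By the previous bound and Definition \ref{DefAmalgam},
\[
\nm{F\ast G}{W(L^{\Phi _1,\Phi _2}_{(\omega _0)})}=\nm{a_{F\ast G,\omega _0}^{(\infty )}}{\ell ^{\Phi _1,\Phi _2}}\lesssim \nm{a_F^{(1)}\ast \tilde a_G}{\ell ^{\Phi _1,\Phi _2}_{(\omega _0)}}.
\]
Since $\omega _0,\omega _1,\omega _2$ satisfy \eqref{Eq:StandardWeightCond2}, Corollary \ref{Cor:Young}~(2) bounds the right-hand side by $\nm{a_F^{(1)}}{\ell ^{r_0}_{(\omega _1)}}\nm{\tilde a_G}{\ell ^{\Phi _1,\Phi _2}_{(\omega _2)}}$. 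Finally $\tilde a_G$ is a fixed finite sum of translates of $a_G^{(\infty )}$, so the moderateness of $\omega _2$ (the discrete analogue of Lemma \ref{Lemma:T}) gives $\nm{\tilde a_G}{\ell ^{\Phi _1,\Phi _2}_{(\omega _2)}}\lesssim \nm{a_G^{(\infty )}}{\ell ^{\Phi _1,\Phi _2}_{(\omega _2)}}$, and the identifications of the first paragraph produce \eqref{2.9}. Since the whole computation only uses measurability and finiteness of the amalgam norms, \eqref{2.9} holds for arbitrary $F,G$ in the two spaces; as $\Sigma _1(\rdd )$ is dense in $W(L^1,L^{r_0}_{(\omega _1)})$ and convolution keeps $\Sigma _1(\rdd )$ in itself, this pointwise-defined convolution is the unique continuous extension of the map on $\Sigma _1(\rdd )\times \Sigma _1(\rdd )$.

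I expect the main obstacle to be the bookkeeping in the pointwise local estimate, namely controlling how many cubes the difference set $(n-k,\nu -\kappa )+[-1,1]^{2d}$ meets and packaging the corresponding translates into the single sequence $\tilde a_G$ without destroying the convolution structure, together with the careful transfer of the weights through \eqref{Eq:StandardWeightCond2} so that Corollary \ref{Cor:Young}~(2) applies with exactly the weights $\omega _0,\omega _1,\omega _2$.
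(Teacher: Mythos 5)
Your proof is correct and follows essentially the same route as the paper: both discretize over the unit cubes $(k,\kappa )+Q_{2d}$, dominate the local sup sequence of $F*G$ pointwise by a discrete convolution (absorbing the enlarged set $[-1,1]^{2d}$ through finitely many translates of the local data of $G$), and conclude with the discrete Young inequality of Corollary \ref{Cor:Young}, using density of $\Sigma _1(\rdd )$ only in the first factor. The lone cosmetic difference is that the paper transfers the weights immediately onto $F\cdot \omega _1$ and $G\cdot \omega _2$ via \eqref{Eq:StandardWeightCond2} and then applies the unweighted discrete estimate, whereas you keep the local pieces unweighted and invoke the weighted Corollary \ref{Cor:Young}~(2) on the lattice through moderateness; the two bookkeepings are equivalent.
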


\par

Lemma \ref{Lemma:WienerConvEst} is similar to \cite[Theorem 5.1]{Rau1}
when $Y$ in \cite{Rau1} is an Orlicz space. Lemma \ref{Lemma:WienerConvEst}
also essentially generalizes parts of \cite[Lemma 2.9]{GaSa}.

\par

\begin{proof}
We have $W(L^{\Phi _1,\Phi _2}_{(\omega _j)}) \subseteq L^\infty_{(\omega _j)}(\rdd )$
and that $\Sigma_1 (\rdd )$ is dense in $W(L^1, L^{r_0}_{(\omega _1)})$. Since $F*G$ is
uniquely defined when $F\in \Sigma_1 (\rdd )$ and $G\in L^\infty_{(\omega _2)}(\rdd )$
the result follows if we prove that \eqref{2.9} holds for $F\in \Sigma_1 (\rdd )$ and
$G\in W(L^{\Phi _1,\Phi _2}_{(\omega _2)})$.

\par

Let $a_{F*G}(k,\kappa)=\sup_{(x,\xi )\in (k,\kappa)+ Q_{2d}}
|(F*G)(x,\xi)\omega _0(x,\xi)|$, $F_{\omega _1}=F\cdot \omega _1$ and
$G_{\omega _2}=G \cdot \omega _2$, where $Q_d=[0,1]^d$ as usual.
First we estimate $a_{F*G}(k,\kappa)$ by
\begin{multline}\label{Eq:CoeffConvEst}
  a_{F*G}(k,\kappa) = \sup
  \left |
  \iint \limits _{\rdd} F(y,\eta)G(x-y,\xi-\eta)\omega _0(x,\xi)\, dy d\eta
  \right |
   \\[1ex]
   \lesssim
   \sup
   \left (
   \iint \limits _{\rdd}
   |F_{\omega _1}(y,\eta)|
   |G_{\omega _2}(x-y,\xi-\eta)|\, dy d\eta
   \right )
 \\[1ex]
  = \sup
  \left (
  \sum_{j,\iota\in \zd} \ \
  \iint \limits _{(j,\iota )+ Q_{2d}}
  |F_{\omega _1}(y,\eta)G_{\omega _2}(x-y,\xi-\eta)|\,dy d\eta
  \right )
  \\[1ex]
  \leq
  \sum_{j,\iota\in \zd} \ \
  \iint \limits _{(j,\iota )+ Q_{2d}}
  |F_{\omega _1}(y,\eta)|
  \left (
  \sup |G_{\omega _2}(x-y,\xi-\eta)|
  \right )dy d\eta
  \\[1ex]
  \leq
  \sum_{j,\iota \in \zd}
  \nm {F_{\omega _1}}{L^1((j,\iota )+Q_{2d})}
  \nm {G_{\omega _2}(\cdo +(k-j,\kappa-\iota))}{L^\infty ([-1,1]^{2d})}
  \\[1ex]
  = (b*c)(k,\kappa),
\end{multline}
where
\begin{align*}
  b(j,\iota) =   \nm {F_{\omega _1}}{L^1((j,\iota )+Q_{2d})}
\quad \text{and}\quad
  c(j,\iota) =
  \Vert G_{\omega _2} (\cdo +(j,\iota)) \Vert_{L^\infty ([-1,1]^{2d})}.
\end{align*}
Here the suprema in \eqref{Eq:CoeffConvEst} are taken with respect to
$(x,\xi )\in (k,\kappa) +Q_{2d}$.

\par

By \eqref{Eq:CoeffConvEst}, Corollary \ref{Cor:Young} and the fact that
$\Vert c \Vert_{\ell^{\Phi _1,\Phi _2}}
\simeq \Vert G \Vert_{W(L^{\Phi _1,\Phi _2})}$
we get
\begin{multline*}
  \Vert F*G \Vert_{W(L^{\Phi _1,\Phi _2}_{(\omega _0)})}
  = \Vert a_{F*G} \Vert|_{\ell^{\Phi _1,\Phi _2}}
  \leq
  \Vert b*c\Vert_{\ell^{\Phi _1,\Phi _2}}
  \leq
  \Vert b \Vert_{\ell^{r_0}}
  \Vert c \Vert_{\ell^{\Phi _1,\Phi _2}}
  \\[1ex]
  \lesssim \Vert F \Vert_{W(L^{1},L^{r_0}_{(\omega _1)})}
  \Vert G \Vert_{W(L^{\Phi _1,\Phi _2}_{(\omega _2)})},
\end{multline*}
and \eqref{2.9} follows.
\end{proof}

\par

By similar arguments we get the following semi-discrete convolution relation. The result
essentially generalizes \cite[Lemma 2.10]{GaSa}.

\par

\begin{lemma}\label{Lemma:No2.10}
Let $\Phi _j$, $\omega _j$ and $r_0$ be the same as in Lemma
\ref{Lemma:WienerConvEst}, and let
$\ep >0$.
Then the map $(a,F)\mapsto a*_{\ep } F$ from $\ell_0 (\ep \zdd)\times
(L^\infty (\rdd ) \cap \mascE'(\rdd ))$ to $L^\infty_{loc}(\rdd )$ extends uniquely
to a continuous mapping from $\ell^{\Phi _1,\Phi _2}_{(\omega _1)}(\ep \zdd)
\times W(L^{r_0}_{(\omega _2)})$ to $W(L^{\Phi _1,\Phi _2}_{(\omega _2)})$,
and
\begin{equation*}
  \nm {a*_{\ep } F } {W(L^{\Phi _1,\Phi _2}_{(\omega _0)})}
  \lesssim
  \Vert a \Vert_{\ell^{\Phi _1,\Phi _2}_{(\omega _1)}}
  \Vert F \Vert_{ W(L^{r_0}_{(\omega _2)})}.
\end{equation*}
\end{lemma}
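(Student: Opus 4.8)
The plan is to mimic the proof of Lemma \ref{Lemma:WienerConvEst}, converting the semi-discrete convolution into a genuine discrete convolution on $\zdd$ and then invoking the discrete Young inequality in Corollary \ref{Cor:Young}(2). First I would dispose of the reduction behind the unique extension. If $F\in L^\infty (\rdd )\cap \mascE '(\rdd )$ is compactly supported, then for each fixed point only finitely many terms of $\sum_{(k,\kappa )\in \ep \zdd }a(k,\kappa )F(\cdo -(k,\kappa ))$ are nonzero, so $a*_\ep F$ is already well defined for an \emph{arbitrary} $a\in \ell^{\Phi _1,\Phi _2}_{(\omega _1)}(\ep \zdd )$ together with such $F$. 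Since $\Sigma_1(\rdd )$ is dense in $W(L^{r_0}_{(\omega _2)})$, it then suffices to establish the asserted estimate for general $a$ and for $F\in \Sigma_1(\rdd )$, the continuous extension in the $F$-variable following by density exactly as in Lemma \ref{Lemma:WienerConvEst}.

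For the estimate itself, set $H=a*_\ep F$ and let $a_{H,\omega _0}(m,\mu )=\sup_{(x,\xi )\in (m,\mu )+Q_{2d}}|H(x,\xi )\omega _0(x,\xi )|$ be the Wiener sequence defining $\nm H{W(L^{\Phi _1,\Phi _2}_{(\omega _0)})}$. Expanding $H$ and applying \eqref{Eq:StandardWeightCond2} in the form $\omega _0(x,\xi )\lesssim \omega _1(k,\kappa )\omega _2((x,\xi )-(k,\kappa ))$, I would bound, with $F_{\omega _2}=F\cdot \omega _2$,
$$
|H(x,\xi )\omega _0(x,\xi )|\lesssim \sum_{(k,\kappa )\in \ep \zdd }|a(k,\kappa )\omega _1(k,\kappa )|\cdot |F_{\omega _2}((x,\xi )-(k,\kappa ))| .
$$
This single step absorbs all three weights and reduces everything to an unweighted situation.

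The main obstacle is the mismatch between the lattice $\ep \zdd $ carrying $a$ and the integer lattice $\zdd $ indexing the Wiener sequences. I would resolve it by grouping: to each $(k,\kappa )\in \ep \zdd $ assign the integer cell $(j,\iota )\in \zdd $ with $(k,\kappa )\in (j,\iota )+Q_{2d}$, and put
$$
b(j,\iota )=\sum_{(k,\kappa )\in \ep \zdd \cap ((j,\iota )+Q_{2d})}|a(k,\kappa )\omega _1(k,\kappa )|,\qquad c(j,\iota )=\nm {F_{\omega _2}(\cdo +(j,\iota ))}{L^\infty ([-1,1]^{2d})} .
$$
Since $(x,\xi )\in (m,\mu )+Q_{2d}$ and $(k,\kappa )\in (j,\iota )+Q_{2d}$ force $(x,\xi )-(k,\kappa )\in (m-j,\mu -\iota )+[-1,1]^{2d}$, taking the supremum over the box turns the previous bound into $a_{H,\omega _0}(m,\mu )\lesssim (b*c)(m,\mu )$. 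As at most $C_\ep =([1/\ep ]+1)^{2d}$ points of $\ep \zdd $ lie in a unit cell, the $r_0$-triangle inequality gives $\nm b{\ell^{\Phi _1,\Phi _2}}\lesssim C_\ep ^{1/r_0}\nm a{\ell^{\Phi _1,\Phi _2}_{(\omega _1)}}$, exactly as in Lemma \ref{Lemma:No2.8}, while $\nm c{\ell^{r_0}}\simeq \nm F{W(L^{r_0}_{(\omega _2)})}$ by the comparability of local $L^\infty$-norms over slightly enlarged boxes (cf. Remark \ref{Rem:Gfunctional}).

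Finally I would apply Corollary \ref{Cor:Young}(2) with trivial weights, using commutativity of the discrete convolution so that $c\in \ell^{r_0}$ plays the role of the $\ell^{r_0}$-factor and $b\in \ell^{\Phi _1,\Phi _2}$ that of the Orlicz factor, obtaining
$$
\nm H{W(L^{\Phi _1,\Phi _2}_{(\omega _0)})}=\nm {a_{H,\omega _0}}{\ell^{\Phi _1,\Phi _2}}\lesssim \nm {b*c}{\ell^{\Phi _1,\Phi _2}}\lesssim \nm c{\ell^{r_0}}\,\nm b{\ell^{\Phi _1,\Phi _2}}\lesssim \nm a{\ell^{\Phi _1,\Phi _2}_{(\omega _1)}}\nm F{W(L^{r_0}_{(\omega _2)})},
$$
which is the desired inequality. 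I expect the only genuinely delicate points to be the correct bookkeeping of the enlarged boxes $[-1,1]^{2d}$, so that $c$ really captures $\nm F{W(L^{r_0}_{(\omega _2)})}$, and the lattice-counting constant $C_\ep $; both are routine but must be tracked, and everything else runs parallel to Lemma \ref{Lemma:WienerConvEst}.
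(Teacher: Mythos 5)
Your overall scheme is exactly the one the paper intends: the paper's own proof is only a sketch, which establishes pointwise well-definedness of $a*_{\ep}F$ for $F\in L^\infty (\rdd )\cap \mascE '(\rdd )$ (via the embedding $\ell ^{\Phi _1,\Phi _2}_{(\omega _1)}\subseteq \ell ^\infty _{(1/v)}$) and then defers to ``similar arguments as in the proof of Lemma \ref{Lemma:WienerConvEst}.'' Your filling-in of those arguments --- dominating the Wiener coefficients of $H=a*_\ep F$ by a discrete convolution $b*c$, absorbing all three weights through \eqref{Eq:StandardWeightCond2}, regrouping the $\ep \zdd$-points into unit cells with the counting constant $C_\ep$ as in Lemma \ref{Lemma:No2.8}, tracking the enlarged boxes $[-1,1]^{2d}$, and then applying Corollary \ref{Cor:Young} (2) with the roles of the $\ell ^{r_0}$- and Orlicz factors swapped relative to Lemma \ref{Lemma:WienerConvEst} --- is precisely the detail work the paper leaves to the reader, and it is done correctly.

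The one step that fails as written is your density claim: $\Sigma _1(\rdd )$ is \emph{not} dense in $W(L^{r_0}_{(\omega _2)})=W(L^\infty ,L^{r_0}_{(\omega _2)})$, because the local component of this Wiener space is $L^\infty$ and continuous functions form a closed proper subspace in the local sup-norm. For instance, $F=\chi _{[0,\frac 12]^{2d}}$ lies in $W(L^{r_0}_{(\omega _2)})$, but $\nm {F-g}{W(L^{r_0}_{(\omega _2)})}\gtrsim \nm {F-g}{L^\infty (Q_{2d})}\ge \frac 12$ for every continuous $g$, so no sequence from $\Sigma _1(\rdd )$ can approximate it. This is exactly why the paper instead extends by density of $L^\infty (\rdd )\cap \mascE '(\rdd )$ in $W(L^{r_0}_{(v)})$: truncation of $F$ to large compact boxes does converge, since the tail of the $\ell ^{r_0}$-norm of the local sup-coefficients vanishes, whereas mollification does not converge locally in $L^\infty$. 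The repair in your setup is immediate: your derivation of $a_{H,\omega _0}\lesssim b*c$ and of the bounds on $\nm b{\ell ^{\Phi _1,\Phi _2}}$ and $\nm c{\ell ^{r_0}}$ nowhere uses smoothness of $F$, only boundedness and compact support, so it applies verbatim with $\Sigma _1(\rdd )$ replaced by $L^\infty (\rdd )\cap \mascE '(\rdd )$, after which the unique continuous extension follows from that density statement as in the paper.
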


\par

\begin{proof}
Let $v\in \mascP _E(\rr {2d})$ be submultiplicative and such that $\omega _1$ is
$v$-moderate.
If $a\in \ell^\infty_{(1/v)}(\ep \zdd)$ and $F\in L^\infty (\rdd )\cap \mascE'(\rdd )$, then
for $(x,\xi)\in \rdd$ belonging to a compact set, $(a*_{\ep } F)(x,\xi)$ is
given by a finite sum of locally bounded functions. This shows that $a*_{\ep } F$
is uniquely defined as an element in $L^\infty_{loc}(\rdd )$.

\par

In particular, since $\ell^{\Phi _1, \Phi _2}_{(\omega _1)}(\ep \zz {2d})\subseteq \ell^{\infty}
_{(1/v)}(\ep \zz {2d} )$, $ a*_{\ep } F$ is uniquely defined as an element in
$\ell^{\infty}_{(1/v)}(\ep \zz {2d})$ when $a\in
\ell^{\Phi _1, \Phi _2}_{(\omega _1)}(\ep \zz {2d} )$
and $F\in L^\infty(\rdd )\cap \mascE'(\rdd  )$.

\par

The result now follows by similar arguments as in the proof of Lemma
\ref{Lemma:WienerConvEst}, and the
fact that $L^\infty (\rdd )\cap \maclE'(\rdd )$ is dense in $W(L^{r_0}_{(v)})$.
The details are left for the reader.
\end{proof}

\par

\section{Gabor analysis of Orlicz modulation spaces}\label{sec4}

\par

In this section we extend the Gabor analysis in \cite{GaSa} to Orlicz
modulation spaces. We show that the quasi norm $f \mapsto \nm {V_{\phi _1} f}
{L^{\Phi _1, \Phi _2}_{(\omega)}}$ and $f \mapsto \nm {V_{\phi _2} f}
{W({L^{\Phi _1, \Phi _2}_{(\omega)}})}$ are equivalent when $\omega
\in \mascP_E (\rdd )$ and $\phi _1,\phi _2$ are suitable. (Cf. Proposition
\ref{Thm:OppWienEst1} below). This leads to that the analysis operator $C_{\phi _1}$
is continuous from $M^{\Phi _1, \Phi _2}_{(\omega)}(\rd )$ into
$\ell^{\Phi _1, \Phi _2}_{(\omega)}(\zdd )$, and that
the corresponding synthesis operator are continuous from
$\ell^{\Phi _1, \Phi _2}_{(\omega)}(\zdd )$ to $M^{\Phi _1, \Phi _2}_{(\omega)}(\rd )$.

\par

In the end we are able to prove that an element belongs to
$M^{\Phi _1, \Phi _2}_{(\omega)}(\rd )$,
if and only if its Gabor coefficients belong to
$\ell^{\Phi _1, \Phi _2}_{(\omega)}(\zdd)$. (Cf. Theorem \ref{Thm:Gabor}.)

\par

We also remark that our investigations are related to those general results in
\cite{Rau1,Rau2} by Rauhut on quasi-Banach coorbit space theory,
but remark that Rauhut's results do not cover our situation. For example,
our weight functions are allowed to grow and decay exponentially, which is
not the case in \cite{Rau1,Rau2}.

\par

\subsection{Comparisons between $\nm {V_{\phi _1} f}{L^{\Phi _1, \Phi _2}_{(\omega)}}$
and $\nm {V_{\phi _2} f} {W({L^{\Phi _1, \Phi _2}_{(\omega)}})}$}

\par

The next extension of \cite[Theorem 3.1]{GaSa} shows that the
condition $\nm{V_\phi f}{{L^{\Phi _1, \Phi _2}_{(\omega)}}}< \infty$
is independent of the choice of window function $\phi$, and that
different $\phi$ gives rise to equivalent norms.

\par

\begin{thm}\label{Thm:LebWindowTransf}
Let $\Phi _1,\Phi _2$ be quasi-Young functions of order $r_0 \in(0,1]$, $\omega, v
\in \mascP_E (\rdd )$ be such that $\omega$ is $v$-moderate, $\phi \in
\Sigma_1(\rd )\setminus{0}$ and let $\phi _{0}(x)=\pi^{-\frac d4}
e^{-\frac 12\cdot {|x|^{2}}}$ be the
standard Gaussian. Then
 \begin{align}
    \Vert V_{\phi _{0}}f \Vert_{L^{\Phi _1,\Phi _2}_{(\omega)}}
    &\lesssim
    \Vert V_{\phi _{0}}\psi \Vert_{L^{r_0}_{(v)}}
    \Vert V_{\phi}f \Vert_{L^{\Phi _1,\Phi _2}_{(\omega)}}
    \label{Eq:LebWindowTransf1}
\intertext{and}
    \Vert V_{\phi}f \Vert_{L^{\Phi _1,\Phi _2}_{(\omega)}}
    &\lesssim
    \Vert V_{\phi}\psi _{0} \Vert_{L^{r_0}_{(v)}}
    \Vert V_{\phi _{0}}f \Vert_{L^{\Phi _1,\Phi _2}_{(\omega)}},
    \label{Eq:LebWindowTransf2}
  \end{align}
  where $\psi$ and $\psi _{0}$ are canonical dual windows for $\phi$ and
  $\phi _{0}$ respectively with respect to some lattice $\ep  \zdd$.
\end{thm}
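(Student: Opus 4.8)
The plan is to prove the two estimates \eqref{Eq:LebWindowTransf1} and \eqref{Eq:LebWindowTransf2} by combining the reconstruction identity supplied by a Gabor frame with the semi-discrete convolution estimate of Corollary \ref{Cor:SemiDiscYoung}. First I would invoke Lemma \ref{Lemma:GoodFrames0} (together with Remark \ref{Remark:GoodFrames0}) to fix a lattice $\ep \zdd$ and canonical dual windows $\psi$ and $\psi _0$ so that both $\phi$ and $\phi _0$ generate Gabor frames on $\ep \zdd$ with those duals, all windows lying in the relevant $M^r_{(v)}(\rd)$ with $r\le r_0$. This reduces everything to the classical pointwise bound for the short-time Fourier transform under change of window: there is a well-known identity of the form
\begin{equation*}
|V_{\phi _0}f(x,\xi )| \le \sum _{(j,\iota )\in \ep \zdd}
|V_\phi f(j,\iota )| \cdot |V_{\phi _0}\psi (x-j,\xi -\iota )|,
\end{equation*}
coming from writing $f$ via its Gabor expansion with respect to $(\phi ,\psi )$ and applying $V_{\phi _0}$ termwise. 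The right-hand side is precisely a semi-discrete convolution $c *_{\ep \zdd} G$, where $c=\{ |V_\phi f(j,\iota )|\}$ and $G=|V_{\phi _0}\psi |$.

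The second step is to feed this into Corollary \ref{Cor:SemiDiscYoung}(2), with $\maclB _1 = L^{\Phi _1,\Phi _2}_{(\omega )}(\rdd )$ as the target and $\ell ^{r_0}_{(v)}(\ep \zdd )$ as the coefficient space. The translation-boundedness hypothesis needed there is exactly the statement of Lemma \ref{Lemma:T} applied to $L^{\Phi _1,\Phi _2}_{(\omega )}$, giving $\nm {F(\cdo -(x,\xi ))}{L^{\Phi _1,\Phi _2}_{(\omega )}}\lesssim v(x,\xi )\nm F{L^{\Phi _1,\Phi _2}_{(\omega )}}$. Applying the corollary to the convolution bound yields
\begin{equation*}
\nm {V_{\phi _0}f}{L^{\Phi _1,\Phi _2}_{(\omega )}}
\lesssim
\nm c{\ell ^{r_0}_{(v)}} \cdot
\nm {V_{\phi _0}\psi}{L^{\Phi _1,\Phi _2}_{(\omega )}},
\end{equation*}
but to land on the stated right-hand side I would instead keep $V_\phi f$ in the Orlicz space and put the window factor $V_{\phi _0}\psi$ into $L^{r_0}_{(v)}$; that is, I apply the corollary in the symmetric form where the \emph{window} transform carries the $\ell ^{r_0}_{(v)}$/$L^{r_0}_{(v)}$ norm and $V_\phi f$ carries the Orlicz norm. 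This produces \eqref{Eq:LebWindowTransf1}. The estimate \eqref{Eq:LebWindowTransf2} is entirely symmetric: one runs the same argument with the roles of $\phi$ and $\phi _0$ interchanged, expanding $f$ in the frame generated by $(\phi _0,\psi _0)$ and applying $V_\phi$.

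The main obstacle I anticipate is bookkeeping rather than conceptual: one must verify that $V_{\phi _0}\psi$ and $V_\phi \psi _0$ genuinely lie in $L^{r_0}_{(v)}$, which requires the dual windows $\psi ,\psi _0$ to sit in a sufficiently small modulation space $M^r_{(v)}$ with $r\le r_0$ — this is where Remark \ref{Remark:GoodFrames0}, guaranteeing $\psi \in M^r_{(v)}$ for the needed $r$, is essential, since for $r_0<1$ one cannot afford merely $M^1_{(v)}$ membership. A secondary technical point is justifying the pointwise Gabor expansion and the interchange of $V_{\phi _0}$ with the infinite sum at the level of $\maclH '_\flat (\rd)$, which is legitimate because the relevant windows lie in Pilipovi\'c-type test spaces and the frame series converges in the appropriate quasi-norm; here I would lean on the density of $\maclH _\flat$ and the already-established completeness from Theorem \ref{Thm:Bargmann} to extend the inequalities from nice $f$ to all of the space by continuity.
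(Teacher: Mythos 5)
Your pointwise bound
$|V_{\phi _0}f(x,\xi )| \le \sum _{(j,\iota )} |V_\phi f(j,\iota )|\, |V_{\phi _0}\psi (x-j,\xi -\iota )|$
is correct, but the step where you ``apply the corollary in the symmetric form'' is a genuine gap: Corollary \ref{Cor:SemiDiscYoung} has no symmetric form. It comes from Lemma \ref{Lemma:SemiDiscYoung}, whose proof rests on the $r_0$-triangle inequality
$\Nm {\sum _k a(k)f(\cdo -k)}{\maclB _2}^{r_0}\le \sum _k |a(k)|^{r_0}\nm {f(\cdo -k)}{\maclB _2}^{r_0}$,
so the $\ell ^{r_0}_{(v)}$ quasi-norm is forced onto the \emph{discrete} factor of the semi-discrete convolution, while the translation-moderate quasi-Banach norm (here $L^{\Phi _1,\Phi _2}_{(\omega )}$, via Lemma \ref{Lemma:T}) goes on the continuous factor. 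In your decomposition --- expanding $f$ in the frame and applying $V_{\phi _0}$ termwise --- the discrete datum is $c=\{ V_\phi f(j,\iota )\}$ and the continuous function is $V_{\phi _0}\psi$, so every legitimate application of the available lemmas gives either $\nm {V_{\phi _0}f}{L^{\Phi _1,\Phi _2}_{(\omega )}}\lesssim \nm c{\ell ^{r_0}_{(v)}}\nm {V_{\phi _0}\psi}{L^{\Phi _1,\Phi _2}_{(\omega )}}$ (the wrong pairing, as you yourself note), or, via Lemma \ref{Lemma:No2.10}, a bound by $\nm c{\ell ^{\Phi _1,\Phi _2}_{(\omega )}}\nm {V_{\phi _0}\psi}{W(L^{r_0}_{(v)})}$ --- but then controlling $\nm c{\ell ^{\Phi _1,\Phi _2}_{(\omega )}}$ by $\nm {V_\phi f}{L^{\Phi _1,\Phi _2}_{(\omega )}}$ requires the Wiener/sampling estimates (Lemma \ref{Lemma:No2.8} together with Theorem \ref{Thm:ModWienerInv}) for a \emph{general} window $\phi$, and Theorem \ref{Thm:ModWienerInv} is proved in the paper by invoking precisely the theorem you are trying to prove; Proposition \ref{Thm:OppWienEst1} covers only the Gaussian window. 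Nor can you fall back on a continuous Young inequality $L^{r_0}_{(v)}*L^{\Phi _1,\Phi _2}_{(\omega )}\to L^{\Phi _1,\Phi _2}_{(\omega )}$: for $r_0<1$ no such estimate holds, which is exactly why the paper routes all continuous convolutions through Wiener amalgam spaces.

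The paper's proof closes this by reversing which object is discretized: instead of expanding $f$, it expands the \emph{window} $\phi _0$ in the Gabor frame generated by $\phi$ with canonical dual $\psi$, writing $\phi _0=\sum _{k,\kappa} b(k,\kappa )\, e^{i\ep \scal \cdo \kappa}\phi (\cdo -\ep k)$ with $b(k,\kappa )=(V_\psi \phi _0)(\ep k,\ep \kappa )$. Since the short-time Fourier transform is conjugate-linear in the window, this yields $|V_{\phi _0}f|\le |\Check b| *_{\ep } |V_\phi f|$, where now the discrete datum is the sampled window transform and $V_\phi f$ is the continuous factor --- exactly the configuration in which Corollary \ref{Cor:SemiDiscYoung} applies as stated. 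The sampling estimate $\nm b{\ell ^{r_0}_{(v)}}\lesssim \nm {V_{\phi _0}\psi}{L^{r_0}_{(v)}}$ (\cite[Theorem 3.7]{Toft15}, \cite[Lemma 3.2]{GaSa}, legitimate here because one window is Gaussian) then produces \eqref{Eq:LebWindowTransf1}, and interchanging the roles of $\phi$ and $\phi _0$ gives \eqref{Eq:LebWindowTransf2}. Your frame-existence and $M^{r_0}_{(v)}$-membership bookkeeping (Lemma \ref{Lemma:GoodFrames0}, Remark \ref{Remark:GoodFrames0}, Gr{\"o}chenig's Theorem S) agrees with the paper; the missing idea is solely this reversal of the roles of $f$ and the window in the Gabor expansion.
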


\par

\begin{proof}
Assume that $V_{\phi}f\in L^{\Phi _1,\Phi _2}_{(\omega)}(\rdd)$ and let
$\ep >0$ be such that
$$
\{e^{i\ep  \scal \cdo \kappa }
\phi(\cdo -\ep  k)\}_{k,\kappa\in\zd}
$$
is a Gabor frame for $L^{2}(\rd)$.
Let $v_0(x,\xi)=e^{|x|+|\xi|}$,
$\psi=(S_{\phi,\psi}^\ep )^{-1}\phi$ be the canonical dual window of $\phi$
and let $b(k, \kappa)=(V_\psi \phi _0)(\ep  k,\ep \kappa) $.
As a consequence of \cite[Theorem S]{Gc1} or the analysis in \cite[Chapter 13]{Gc2}
it follows that $\psi\in M^{r_0}_{(v)}(\rd)$ and
\begin{equation*}
\phi _0=\sum_{k,\kappa\in\zd }
b(k,\kappa)
\phi _{k,\kappa}, \quad \phi _{k,\kappa}(x)=e^{i\ep  \scal x \kappa} \phi(x-\ep  k),
\end{equation*}
with unconditional convergence in $M^1_{(v_0v)}(\rd) \subseteq M^{r_0}_{(v)}(\rd)$
(see also \cite[Proposition 1.4]{Toft16} for details). We have
\begin{multline*}
|V_{\phi _{0}}f(x,\xi)|
\le \sum_{k,\kappa\in\zd }|b(k,\kappa )V_{\phi _{k,\kappa}}f(x,\xi)|
\\[1ex]
\le
\sum_{k,\kappa\in\zd}
|b(k,\kappa)||V_\phi f(x+\ep  k,\xi+\ep \kappa)|
\\[1ex]
= (|\Check{b}|*_{\ep } |V_{\phi}f|)(x,\xi),
\end{multline*}
where $ \Check{b}(k,\kappa) = b(-k,-\kappa)$. By Corollary \ref{Cor:SemiDiscYoung}
and the fact that $\nm b{\ell ^{r_0}_{(v)}}\lesssim \nm {V_{\phi _0}\psi}{L^{q_0}_{(v)}}<\infty$,
in view of \cite[Theorem 3.7]{Toft15}, we obtain
\begin{multline*}
  \Vert V_{\phi _{0}}f\Vert_{L^{\Phi _1,\Phi _2}_{(\omega)}}
  \leq
  \Vert |\Check{b}|*_{\ep } |V_{\phi}f| \Vert _{L^{\Phi _1,\Phi _2}_{(\omega)}}
  \lesssim
  \Vert \Check{b}\Vert_{\ell^{r_0}_{(v)}} \Vert V_{\phi}f\Vert_{L^{\Phi _1,\Phi _2}_{(\omega)}}
  \\[1ex]
  \lesssim
 \Vert V_{\phi _0}\psi \Vert_{L^{r_0}_{(v)}}
 \Vert V_{\phi}f \Vert_{L^{\Phi _1,\Phi _2}_{(\omega)}}.
\end{multline*}
Here the last step follows from \cite[Lemma 3.2]{GaSa} (see also Proposition
\ref{Thm:OppWienEst1} below). This gives
\eqref{Eq:LebWindowTransf1}. By interchanging the roles of $\phi$
and $\phi _{0}$, we obtain \eqref{Eq:LebWindowTransf2}.
\end{proof}

\par

\begin{rem}
Let $\phi,\phi _0,\psi$ and $\psi _0$ be the same as in Theorem \ref{Thm:LebWindowTransf}.
By choosing the lattice dense enough, it follows that $V_{\phi _0} \psi \in
L_{(v)}^{r_0}(\rd)$ and $V_{\phi}\psi _0\in L_{(v)}^{r_0}(\rd)$. In fact, let
$v_0(x,\xi)$ be subexponential. Then $\phi _0, \phi\in M^{1}_{(v_0 v)}(\rd)
\subseteq M^{r_0}_{(v)}(\rd)$. By Theorem S in \cite{Gc1}, $\psi _0 , \psi
\in M^{1}_{(v_0 v)}(\rd)\subseteq M^{r_0}_{(v)}(\rd)$ provided that the
lattices of Gabor frames are dense enough. This implies that
$\nm {V_{\phi _0}\psi}{L^{r_0}_{(v)}}$ and $\nm {V_{\phi}\psi _0}{L^{r_0}_{(v)}}$
in \eqref{Eq:LebWindowTransf1} and \eqref{Eq:LebWindowTransf2} are finite.
\end{rem}

\par

\begin{prop}\label{Thm:OppWienEst1}
Let $\Phi _1,\Phi _2$ be quasi-Young functions of order $r_0 \in(0,1]$, $\omega
\in \mascP_E (\rdd )$, $f\in M^{\Phi _1,\Phi _2}_{(\omega)}(\rd)$ and let $\phi _{0}(x)
=\pi^{-\frac d4}e^{-\frac{|x|^{2}}{2}}$.
Then $V_{\phi _0}f\in W(L^{\Phi _1,\Phi _2}_{(\omega)})$ and
  \begin{equation}\label{WSTF}
    \nm {V_{\phi _0} f}{{W(L^{\Phi _1,\Phi _2}_{(\omega)})}}
    \lesssim
    \Vert V_{\phi _0}f\Vert_{L^{\Phi _1,\Phi _2}_{(\omega)}}.
  \end{equation}
\end{prop}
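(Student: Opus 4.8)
The reverse of \eqref{WSTF}, namely $\nm{V_{\phi _0}f}{L^{\Phi _1,\Phi _2}_{(\omega)}}\le \nm{V_{\phi _0}f}{W(L^{\Phi _1,\Phi _2}_{(\omega)})}$, is already contained in \eqref{Eq:Wiener}, so the whole content lies in the stated direction. The plan is to reduce, via Remark \ref{Rem:Gfunctional}(2), to proving $\nm{a}{\ell^{\Phi _1,\Phi _2}_{(\omega)}}\lesssim \nm{V_{\phi _0}f}{L^{\Phi _1,\Phi _2}_{(\omega)}}$, where $a(k,\kappa)=\nm{V_{\phi _0}f}{L^\infty((k,\kappa)+Q_{2d})}$ is the unweighted local supremum appearing in Definition \ref{DefAmalgam}. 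The difficulty is structural: this is a local $L^\infty$ quantity, and it must be dominated by a global Orlicz norm assembled from $L^{r_0}$-type local averages. The device that bridges the two is a sub-mean value inequality for $|V_{\phi _0}f|^{r_0}$, and establishing it is, I expect, the main obstacle.

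Concretely, the key pointwise estimate I would prove is that $|V_{\phi _0}f(w)|^{r_0}\lesssim \int_{|w-w'|\le 1}|V_{\phi _0}f(w')|^{r_0}\,dw'$ for $w\in\rdd$, with constant independent of $f$ and $w$; call this $(\ast)$. For this I would pass to the Bargmann picture of Section \ref{sec2}: writing $w=(x,\xi)$ and $z=2^{-\frac 12}(x-i\xi)\in\cc d$, one has $|V_{\phi _0}f(x,\xi)|=c\,e^{-\frac 12|z|^2}|(\mathfrak{V}_d f)(z)|$ up to a unimodular factor, which is exactly the identity underlying \eqref{Eq:Bargmannnorm} (cf. \cite{Toft18}). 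Since $\mathfrak{V}_d f$ is entire, $|\mathfrak{V}_d f|^{r_0}$ is subharmonic for every $r_0>0$ and hence satisfies the sub-mean value inequality over the unit ball; as the Gaussian factor $e^{-\frac{r_0}2|z|^2}$ varies only by a bounded multiplicative factor over a ball of radius $1$, it may be taken in and out of the average at the cost of a constant, giving $(\ast)$. I expect this to be the crux because for $r_0<1$ the inequality $(\ast)$ cannot be obtained from the elementary convolution bound $|V_{\phi _0}f|\lesssim |V_{\phi _0}f|*|V_{\phi _0}\phi _0|$ by raising to the power $r_0$ and invoking Jensen: concavity of $t\mapsto t^{r_0}$ points the inequality the wrong way, and it is precisely the analyticity of the Bargmann transform that repairs this.

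Granting $(\ast)$, the remaining steps use only the machinery already assembled. Taking the supremum over $(x,\xi)\in(k,\kappa)+Q_{2d}$ and enlarging the unit ball to a fixed bounded box gives $a(k,\kappa)^{r_0}\lesssim \sum_{|(j,\iota)|\le N}b(k+j,\kappa+\iota)^{r_0}$, where $b(j,\iota)=\nm{V_{\phi _0}f}{L^{r_0}((j,\iota)+Q_{2d})}$ and $N$ is fixed; in other words $a^{r_0}\lesssim b^{r_0}*e$ for a finitely supported sequence $e$. Applying the transfer identity \eqref{Eq:OrliczNormTransfers} for the counting measure together with the discrete convolution estimate of Corollary \ref{Cor:Young} (with the underlying Young functions and the finite kernel $e$), and using that $\omega$ is comparable along the finitely many neighbour shifts by $v$-moderateness, I obtain $\nm{a}{\ell^{\Phi _1,\Phi _2}_{(\omega)}}\lesssim \nm{b}{\ell^{\Phi _1,\Phi _2}_{(\omega)}}=\nm{V_{\phi _0}f}{W(L^{r_0},L^{\Phi _1,\Phi _2}_{(\omega)})}$.

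It remains to bound the last quantity by $\nm{V_{\phi _0}f}{L^{\Phi _1,\Phi _2}_{(\omega)}}$, i.e.\ to establish the embedding $L^{\Phi _1,\Phi _2}_{(\omega)}\hookrightarrow W(L^{r_0},L^{\Phi _1,\Phi _2}_{(\omega)})$. On each unit cube this is the finite-measure inclusion $L^{\Phi _j}(Q_{2d})\hookrightarrow L^{r_0}(Q_{2d})$ furnished by Lemma \ref{Lemma:Embeddings}, and I would apply it variable by variable so as to respect the mixed-norm structure. This last bookkeeping is routine but slightly delicate, and together with $(\ast)$ it yields \eqref{WSTF}.
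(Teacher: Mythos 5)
Your architecture is essentially the paper's: the crux is the sub-mean-value inequality $(\ast)$ for $|V_{\phi _0}f|^{r_0}$ (the paper imports exactly this as \cite[Lemma 2.3]{GaSa} at the corresponding step), and your subsequent bookkeeping --- $a^{r_0}\lesssim b^{r_0}*e$ with a finitely supported kernel handled by Corollary \ref{Cor:Young}, then the embedding into the Wiener amalgam with local $L^{r_0}$ component --- is a modular repackaging of the Jensen--Minkowski--Jensen computation the paper performs inline. Your observation that for $r_0<1$ the pointwise bound cannot come from $|V_{\phi _0}f|\lesssim |V_{\phi _0}f|*|V_{\phi _0}\phi _0|$ by raising to the power $r_0$ is also correct. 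However, your justification of $(\ast)$ fails at precisely the point you flag as the crux: the claim that the Gaussian factor $e^{-\frac{r_0}2|z|^2}$ ``varies only by a bounded multiplicative factor over a ball of radius $1$'' is false. On $|w-z_0|\le 1$ the ratio of its extreme values is $e^{\frac{r_0}2\left((|z_0|+1)^2-(|z_0|-1)^2\right)}=e^{2r_0|z_0|}$, unbounded as $|z_0|\to \infty$. Applying the sub-mean-value property to $|\mathfrak{V}_d f|^{r_0}$ alone and then reinstating the Gaussian therefore only yields $|V_{\phi _0}f(w)|^{r_0}\lesssim e^{c|w|}\int _{|w-w'|\le 1}|V_{\phi _0}f(w')|^{r_0}\, dw'$, and the exponentially growing constant destroys the uniformity needed for \eqref{WSTF}.

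The repair is the standard Fock-space twist, and with it your plan goes through: apply subharmonicity not to $|\mathfrak{V}_d f|^{r_0}$ but to $|G|^{r_0}$, where $G(w)=(\mathfrak{V}_d f)(w)\, e^{-\scal w{\bar z_0}+\frac 12 |z_0|^2}$ is entire in $w$ (the exponent is holomorphic linear) and satisfies $|G(w)|=|(\mathfrak{V}_d f)(w)|\, e^{-\frac 12 |w|^2}e^{\frac 12 |w-z_0|^2}$, since $-\operatorname{Re}\scal w{\bar z_0}+\frac 12 |z_0|^2 = -\frac 12 |w|^2+\frac 12 |w-z_0|^2$. Thus $|G(z_0)|$ is exactly the Gaussian-weighted modulus at the center, while $|G(w)|\le e^{\frac 12}|(\mathfrak{V}_d f)(w)|e^{-\frac 12|w|^2}$ on the unit ball, and the sub-mean-value property of the subharmonic function $|G|^{r_0}$ gives $(\ast)$ with a constant independent of $z_0$; this is exactly how the cited \cite[Lemma 2.3]{GaSa} is proved. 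One smaller caveat in your closing step: the embedding $L^{\Phi _1,\Phi _2}_{(\omega)}\hookrightarrow W(L^{r_0},L^{\Phi _1,\Phi _2}_{(\omega)})$ is not obtained merely by applying the cube-wise inclusion of Lemma \ref{Lemma:Embeddings}; to assemble the local pieces into the global mixed quasi-norm you need Jensen's inequality (convexity of $\Phi _{0,1}$, $\Phi _{0,2}$) to pull the unit-cube averages through, together with Minkowski's integral inequality for the $L^{\Phi _{0,1}}$-norm of the $\xi$-integral --- which is word for word the computation in the paper's own proof, so it is available, but it is convexity rather than the finite-measure inclusion that makes it work.
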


\par

\begin{proof}
Let $F_0 = |V_{\phi _0}f|$ and let
\begin{equation*}
a_{F_0}(k,\kappa) = \sup_{(x,\xi)\in Q_{2d}}
F_0(x+k,\xi+\kappa),\qquad k,\kappa \in \zz d.
\end{equation*}
For each $k,\kappa\in \zd $, choose
$$
X_{k,\kappa}=(x_{k,\kappa},\xi_{k,\kappa})\in (k,\kappa)+Q_{2d}
$$
such that
\begin{equation*}
F_0(X_{k,\kappa})
= a_{F_0}(k,\kappa).
\end{equation*}
We have
\begin{equation} \label{equ1}
  \Vert f\Vert_{M^{\Phi _1,\Phi _2}_{(\omega)}} =
\Vert F_0^{r_0}\Vert_{L^{\Phi _{0,1},\Phi _{0,2}}_{(\omega)}}^{1/r_0}.
\end{equation}
For any
\begin{align*}
X &= (x_{1},\dots,x_{d},\xi_{1},\dots,\xi_{d})\in \rdd
\intertext{and}
X_0 &= (x_{0,1},\dots,x_{0,d},\xi_{0,1},\dots,\xi_{0,d})\in \rdd ,
\end{align*}
let $X_j= (x_j,\xi _j)\in \rr 2$, $X_{0,j}= (x_{0,j},\xi _{0,j})\in \rr 2$, $j=1,\dots ,d$,
$D_r(X_0)$ be polydisc
$$
\sets {X\in \rr d}
{|X_j-X_{0,j}|<r, j=1,\dots ,d},
$$
$U_{1,d} = [-r, 1+r]^d$ and $U_{2,d}=[-2-r,2+r]^d$.
By Lemma 2.3 in \cite{GaSa} we get
\begin{multline*}
F_0 (X_{k,\kappa})^{r_0}\omega (X_{k,\kappa})^{r_0}
\lesssim  \iint \limits_{D_r(X_{k,\kappa})}F_0(x,\xi)^{r_0}
 \omega (X_{k,\kappa})^{r_0}\, dx d\xi
\\[1ex]
\lesssim  \iint \limits_{X_{k,\kappa}+U_{1,2d}} F_0(x,\xi)^{r_0}
\omega(x,\xi)^{r_0}\, dx d\xi.
\end{multline*}

\par

In order to estimate the left hand side of \eqref{WSTF} we apply
the latter estimates on
\begin{multline} \label{Eq:est1}
  \sum_{k\in \zd} \Phi _{0,1} \left(
  \frac{F_0(X_{k,\kappa})^{r_0} \omega(X_{k,\kappa})^{r_0}}{\lambda^{r_0}}
  \right)
  \\[1ex]
  \le
  \sum_{k\in \zd}  \Phi _{0,1} \left(
  \frac{C^{r_0}}{\lambda^{r_0}} \iint\limits_{X_{k,\kappa}+ U_{1,2d}}
  F_0(x,\xi)^{r_0} \omega(x,\xi)^{r_0}\, dx d\xi
  \right)
  \\[1ex]
 \leq
 \sum_{k\in \zd} \Phi _{0,1} \left(
  \frac{C^{r_0}}{\lambda^{r_0}}
  \iint \limits_{(k,\kappa)+U_{2,2d}} F_0(x,\xi)^{r_0} \omega(x,\xi)^{r_0}\, dx d\xi
   \right),
\end{multline}
which is true for some $C>0$. Since the volume of $U_{2,d}$ is equal to
$(4+2r)^d$ and $\Phi _{0,1}$ is convex, Jensen's inequality gives
  \begin{multline} \label{Eq:est2}
  \sum_{k\in \zd} \Phi _{0,1} \left(
  \frac{C^{r_0}}{\lambda^{r_0}} \iint\limits_{(k,\kappa)+U_{2,2d}} F_0(x,\xi)^{r_0}
  \omega(x,\xi)^{r_0}\, dx d\xi
   \right)
   \\[1ex]
   \le
   \sum_{k\in \zd}\ \
   (4+2r)^{-d}
    \int\limits_{k+U_{2,d}}\Phi _{0,1}
    \left(
  \frac{  C^{r_0}(4+2r)^d}{\lambda^{r_0}}
  \int \limits _{\kappa +U_{2,d}}
  F_0(x,\xi)^{r_0}
 \omega(x,\xi)^{r_0}\, d\xi
  \right)dx
  \\[1ex]
  = 4^d (4+2r)^{-d}
  \int \limits _{\rd} \Phi _{0,1} \left(
  \frac{  C^{r_0}(4+2r)^d}{\lambda^{r_0}}
  \int \limits _{\kappa +U_{2,d}} F_0(x,\xi )^{r_0}  \omega(x,\xi )^{r_0}\, d\xi
  \right)dx.
\end{multline}
By \eqref{Eq:est1}, \eqref{Eq:est2} and the definition of $L^{\Phi _{0,1}}$
norm we get
\begin{equation*}
\Vert a(\cdo, \kappa)\Vert_{\ell^{\Phi _1}_{(\omega)}}
\lesssim
\left\Vert \,
\int\limits_{\kappa+U_{2,d}}
F_0(\cdo,\xi)^{r_0} \omega(\cdo,\xi)^{r_0}\, d\xi
\right \Vert^{1/r_0}_{L^{\Phi _{0,1}}}.
\end{equation*}

\par

Let $\Vert a(\cdo , \kappa) \Vert_{\ell^{\Phi _1}_{(\omega)}} = b(\kappa)$.
Then by Minkowski's inequality and again using Jensen's inequality we get
for some $C>0$ that
\begin{multline*}
    \sum_{\kappa\in \zd} \Phi _{0,2}\left(\frac{b(\kappa)^{r_0}}{\lambda^{r_0}}
    \right)
    \le
    \sum_{\kappa\in \zd} \Phi _{0,2}\left (
    \left \Vert
    \frac{C^{r_0}}{\lambda^{r_0}} \int\limits_{\kappa+U_{2,d}}
    F_0(\cdo,\xi)^{r_0}
    \omega(\cdo,\xi)^{r_0}\, d\xi
    \right \Vert_{L^{\Phi _{0,1}}}
      \right)
      \\[1ex]
      \leq
      \sum _{\kappa\in \zd} \Phi _{0,2}\left( \frac{C^{r_0}}{\lambda^{r_0}}
      \int \limits _{\kappa+U_{2,d}}
      \Vert F_0(\cdo,\xi)^{r_0}
      \omega(\cdo,\xi)^{r_0}
      \Vert_{L^{\Phi _{0,1}}}\, d\xi
      \right)
      \\[1ex]
      \le
      \sum_{\kappa\in \zd}
      (4+2r)^{-d}
      \int\limits_{\kappa+U_{2,d}} \Phi _{0,2}
      \left(
      \frac{C^{r_0}(4+2r)^d}{\lambda^{r_0}}
      \Vert F_0(\cdo,\xi)^{r_0}
      \omega(\cdo,\xi)^{r_0}
      \Vert_{L^{\Phi _{0,1}}}
      \right)\, d\xi
      \\[1ex]
      =
    4^d (4+2r)^{-d}
     \int\limits_{\rd} \Phi _{0,2}
     \left(
     \frac{C^{r_0}(4+2r)^d}{\lambda^{r_0}}
    \Vert F_0(\cdo,\xi)^{r_0}
    \omega(\cdo,\xi)^{r_0}
    \Vert_{L^{\Phi _{0,1}}}
    \right) d\xi.
\end{multline*}
By the definition of $L^{\Phi _{0,2}}_{(\omega)}$ norm we get
\begin{equation} \label{equ2}
\Vert a\Vert_{\ell^{\Phi _1,\Phi _2}_{(\omega)}}
= \Vert b\Vert_{\ell^{\Phi _2}}
\lesssim
\Vert F_0\Vert_{L^{\Phi _1,\Phi _2}_{(\omega)}}.
\end{equation}
Hence \eqref{equ1} and \eqref{equ2} give
\begin{equation*}
   \Vert  V_{\phi _0}f\Vert_{W(L^{\Phi _1,\Phi _2}_{(\omega)})}
   = \Vert a\Vert_{\ell^{\Phi _1,\Phi _2}_{(\omega)}}
   \lesssim\Vert F_0\Vert_{L^{\Phi _1,\Phi _2}_{(\omega)}}
   =\Vert V_{\phi _0}f\Vert_{L^{\Phi _1,\Phi _2}_{(\omega)}}.\qedhere
\end{equation*}
\end{proof}

\par

We have now the following extension of Proposition
\ref{Thm:OppWienEst1}, \cite[Theorem 3.3]{GaSa} and
\cite[Proposition 3.4]{Toft15}.

\par

\begin{thm}\label{Thm:ModWienerInv}
Let $\Phi _1,\Phi _2$ be quasi-Young functions of order $r_0 \in(0,1]$,
$\omega, v \in \mascP_E (\rdd )$ be such that $\omega$ is $v$-moderate,
and let $\phi _1,\phi _2\in M^{r_0}_{(v)}(\rd)$ with dual windows in  $M^{r_0}_{(v)}(\rd)$
with respect to some lattice in $\rdd $.
If $V_{\phi _1} f\in L^{\Phi _1,\Phi _2}_{(\omega)}(\rdd )$,
then $V_{\phi _2} f\in W(L^{\Phi _1,\Phi _2}_{(\omega)})$ and
\begin{equation*}
\nm {V_{\phi _2} f}{W(L^{\Phi _1,\Phi _2}_{(\omega)})}
\lesssim
\nm {V_{\phi _1} f}{L^{\Phi _1,\Phi _2}_{(\omega)}}.
\end{equation*}
\end{thm}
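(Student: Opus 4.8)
The plan is to reduce the general window-change estimate to the special case already established in Proposition~\ref{Thm:OppWienEst1}, where one of the windows is the Gaussian $\phi _0$, together with the window-independence of the $L^{\Phi _1,\Phi _2}_{(\omega)}$ quasi-norm furnished by Theorem~\ref{Thm:LebWindowTransf}. Concretely, I would first pass from $\phi _1$ to the Gaussian: since $\phi _1\in M^{r_0}_{(v)}(\rd)$ has a dual window in $M^{r_0}_{(v)}(\rd)$ with respect to a suitable lattice, \eqref{Eq:LebWindowTransf2} (with the roles of $\phi$ and $\phi _0$ suitably read off) gives
\begin{equation*}
\nm {V_{\phi _0} f}{L^{\Phi _1,\Phi _2}_{(\omega)}}
\lesssim
\nm {V_{\phi _1} f}{L^{\Phi _1,\Phi _2}_{(\omega)}}.
\end{equation*}
In particular the hypothesis $V_{\phi _1}f\in L^{\Phi _1,\Phi _2}_{(\omega)}(\rdd )$ guarantees $V_{\phi _0}f\in L^{\Phi _1,\Phi _2}_{(\omega)}(\rdd )$, so that $f\in M^{\Phi _1,\Phi _2}_{(\omega)}(\rd)$ and Proposition~\ref{Thm:OppWienEst1} is applicable.

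Next I would invoke Proposition~\ref{Thm:OppWienEst1} to upgrade the Lebesgue estimate on the Gaussian transform to a Wiener-space estimate, namely
\begin{equation*}
\nm {V_{\phi _0} f}{W(L^{\Phi _1,\Phi _2}_{(\omega)})}
\lesssim
\nm {V_{\phi _0} f}{L^{\Phi _1,\Phi _2}_{(\omega)}}.
\end{equation*}
The final task is then to move from the Gaussian window $\phi _0$ back to the target window $\phi _2$ at the level of the Wiener norm. For this I would expand $\phi _2$ in the Gabor frame generated by $\phi _0$ (using that $\phi _2\in M^{r_0}_{(v)}(\rd)$ admits a dual window in the same space), write the pointwise domination
\begin{equation*}
|V_{\phi _2} f(x,\xi )|
\le
(|\check b|*_{\ep }|V_{\phi _0}f|)(x,\xi ),
\end{equation*}
with Gabor coefficients $b\in \ell^{r_0}_{(v)}(\ep \zdd )$, exactly as in the proof of Theorem~\ref{Thm:LebWindowTransf}, and then apply the semi-discrete Wiener convolution estimate of Lemma~\ref{Lemma:No2.10} to obtain
\begin{equation*}
\nm {V_{\phi _2} f}{W(L^{\Phi _1,\Phi _2}_{(\omega)})}
\lesssim
\nm {\check b}{\ell^{r_0}_{(v)}}\,
\nm {V_{\phi _0} f}{W(L^{\Phi _1,\Phi _2}_{(\omega)})}.
\end{equation*}
Chaining these three inequalities yields the claim.

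The step I expect to require the most care is the last one, the passage back to $\phi _2$ at the Wiener level. In Theorem~\ref{Thm:LebWindowTransf} the analogous move was carried out against the plain Lebesgue norm $L^{\Phi _1,\Phi _2}_{(\omega)}$ using Corollary~\ref{Cor:SemiDiscYoung}; here I must instead feed $V_{\phi _0}f\in W(L^{r_0}_{(v)})$ (or a suitable majorant thereof) into Lemma~\ref{Lemma:No2.10}, so I must verify that the output $V_{\phi _0}f$ of Proposition~\ref{Thm:OppWienEst1} indeed lies in the Wiener space $W(L^{r_0}_{(\omega _2)})$ required as the second argument, and that the weights $\omega _0,\omega _1,\omega _2$ can be chosen to satisfy \eqref{Eq:StandardWeightCond2} with $\omega _1$ governing the summable coefficient sequence and $\omega$ playing the role of $\omega _0=\omega _2$. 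The finiteness $\nm {\check b}{\ell^{r_0}_{(v)}}<\infty$ is the familiar consequence of $\phi _2$ and its dual lying in $M^{r_0}_{(v)}(\rd)$ (via $b(k,\kappa )=V_{\psi _2}\phi _0(\ep k,\ep \kappa )$ and Lemma~\ref{Lemma:No2.8}), so the remaining obstacle is essentially bookkeeping of weights and confirming that the convergence of the Gabor expansion takes place in a space on which the pointwise bound and the convolution inequality are both valid.
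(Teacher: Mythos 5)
Your overall architecture matches the paper's: use the Gaussian $\phi _0$ as a pivot, upgrade the Lebesgue estimate to a Wiener estimate via Proposition \ref{Thm:OppWienEst1}, and control the change of window by Theorem \ref{Thm:LebWindowTransf} (for the first step you want \eqref{Eq:LebWindowTransf1}, not \eqref{Eq:LebWindowTransf2}, but that is a labelling slip, not a mathematical one). The genuine problem is your final step. Lemma \ref{Lemma:No2.10} has the two quasi-norms in exactly the opposite positions from what you need: it bounds $a*_{\ep }F$ by
$\nm a{\ell ^{\Phi _1,\Phi _2}_{(\omega _1)}}\, \nm F{W(L^{r_0}_{(\omega _2)})}$,
i.e.\ the Orlicz norm sits on the \emph{coefficient sequence} and the $r_0$-type Wiener norm on the \emph{function} (this is the pairing used in Theorem \ref{Thm:SynthesisOp}, where $c\in \ell ^{\Phi _1,\Phi _2}_{(\omega )}$ and $F=V_{\phi _0}\psi$). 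You need the reverse pairing, $\check b\in \ell ^{r_0}_{(v)}$ against $V_{\phi _0}f\in W(L^{\Phi _1,\Phi _2}_{(\omega )})$, which Lemma \ref{Lemma:No2.10} does not provide. Your own fallback --- verifying that $V_{\phi _0}f$ lies in $W(L^{r_0}_{(\omega _2)})$ --- cannot succeed: by Corollary \ref{Cor:ModWienerInv} the hypothesis only gives $f\in M^{\Phi _1,\Phi _2}_{(\omega )}(\rd)\hookrightarrow M^{\infty}_{(\omega )}(\rd)$, whereas $V_{\phi _0}f\in W(L^{r_0}_{(v)})$ would essentially force $f\in M^{r_0}$, a far stronger condition that fails for general $f$ in the theorem's scope.

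The paper closes this step differently, and its choice is exactly what makes the pairing come out right: instead of a Gabor expansion of $\phi _2$, it uses the continuous reproducing formula
$|V_{\phi _2}f|\le \nm {\phi _0}{2}^{-2}\, |V_{\phi _0}f|*|V_{\phi _2}\phi _0|$
and then Lemma \ref{Lemma:WienerConvEst}, whose hypotheses place $W(L^1,L^{r_0}_{(\omega _1)})$ on one factor and $W(L^{\Phi _1,\Phi _2}_{(\omega _2)})$ on the other; the factor $\nm {V_{\phi _2}\phi _0}{W(L^1,L^{r_0}_{(v)})}\asymp \nm {\phi _2}{M^{r_0}_{(v)}}$ is finite by \cite[Proposition 1.15$'$]{Toft22}, and Theorem \ref{Thm:LebWindowTransf} then replaces $\phi _0$ by $\phi _1$ at the Lebesgue level. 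If you want to keep your semi-discrete route, the cure is not Lemma \ref{Lemma:No2.10} but the abstract Lemma \ref{Lemma:SemiDiscYoung} applied with $\maclB _1=\maclB _2=W(L^{\Phi _1,\Phi _2}_{(\omega )})$; that requires first proving a translation bound $\nm {F(\cdo -X)}{W(L^{\Phi _1,\Phi _2}_{(\omega )})}\lesssim v(X)\nm F{W(L^{\Phi _1,\Phi _2}_{(\omega )})}$ for all real shifts $X\in \rdd$ (not just lattice shifts), which is provable by a cube-overlap argument in the spirit of Lemma \ref{Lemma:No2.8} combined with Lemma \ref{Lemma:T}, but is nowhere stated in the paper and so must be supplied before your chain of inequalities is complete.
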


\par

\begin{proof}
Using the reproducing formula we have
\begin{equation*}
 |V_{\phi _2}f(x,\xi)|\leq
 \frac{1}{\Vert \phi _0\Vert^{2}_2}\left(
 |V_{\phi _0}f|*|V_{\phi _2} \phi _0|
 \right) (x,\xi).
\end{equation*}
By Lemma \ref{Lemma:WienerConvEst}, Theorem \ref{Thm:LebWindowTransf}
and Proposition \ref{Thm:OppWienEst1} we obtain
\begin{multline*}
\Vert V_{\phi _2}  f \Vert_{W(L^{\Phi _1,\Phi _2}_{(\omega)})}
\lesssim
\Vert V_{\phi _0} f
\Vert_{W(L^{\Phi _1,\Phi _2}_{(\omega)})}
\Vert V_{\phi _2}  \phi _0\Vert_{W(L^1,L^{r_0}_{(v)})}
\\[1ex]
\lesssim
\Vert V_{\phi _0} f \Vert_{L^{\Phi _1,\Phi _2}_{(\omega)}}
\Vert V_{\phi _2}  \phi _0\Vert_{W(L^1,L^{r_0}_{(v)})}.
\end{multline*}
By \cite[Proposition 1.15$'$]{Toft22} we get $\nm {V_{\phi _2}  \phi _0}{W(L^1,L^{r_0}_{(v)})}
\asymp \nm {\phi _2}{M^{r_0}_{(v)}}$. (See also \cite{Toft15}.) Hence, if $\psi _1$
is the dual window of
$\phi _1$, then Theorem \ref{Thm:LebWindowTransf} gives
\begin{equation*}
\Vert V_{\phi _2}  f \Vert_{W(L^{\Phi _1,\Phi _2}_{(\omega)})}
\lesssim
\nm {\phi _2}{M^{r_0}_{(v)}}
\Vert V_{\phi _0} \psi\Vert_{L^{r_0}_{(v)}}
\Vert V_{\phi _1}  f \Vert_{L^{\Phi _1,\Phi _2}_{(\omega)}}
\\[1ex]
\asymp
\Vert V_{\phi _1} f \Vert_{L^{\Phi _1,\Phi _2}_{(\omega)}}.\qedhere
\end{equation*}
\end{proof}

\par

We may now deduce suitable continuity properties for analysis and
synthesis operators. (Cf. \cite[Theorem 3.5]{GaSa} and \cite[Theorem 5.6]{Rau2})

\par

\begin{thm}\label{Thm:AnalysisOp}
Let $\ep >0$, $\Phi _1,\Phi _2$ be quasi-Young functions of order $r_0 \in(0,1]$, $\omega, v \in
\mascP_E (\rdd )$ be such that $\omega$ is $v$-moderate and let $\phi
\in M^{r_0}_{(v)}(\rd)$ with dual windows in $M^{r_0}_{(v)}(\rd)$ with respect to
$\ep \zz {2d}$.
Then the analysis operator $C_{\phi}^{\ep}$ is continuous from
$M^{\Phi _1,\Phi _2}_{(v)}(\rd)$ into $\ell^{\Phi _1,\Phi _2}_{(\omega)}(\zdd)$, and
$$
\nm {C_{\phi}^{\ep}f}{\ell^{\Phi _1,\Phi _2}_{(\omega)}}
\lesssim
\nm f{ M_{(\omega)}^{\Phi _1,\Phi _2}},
\quad
f\in M_{(\omega)}^{\Phi _1,\Phi _2}(\rd).
$$
\end{thm}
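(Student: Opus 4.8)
The plan is to assemble the statement from the two main ingredients already available, namely Theorem \ref{Thm:ModWienerInv}, which transfers an $L^{\Phi _1,\Phi _2}_{(\omega)}$ bound on one short-time Fourier transform into a Wiener-amalgam bound on another, and Lemma \ref{Lemma:No2.8}, which estimates the lattice samples of a continuous Wiener-amalgam function by its Wiener norm. The analysis operator is precisely the sampling
$$
C_\phi^\ep f = \{ V_\phi f(\ep k,\ep \kappa)\} _{k,\kappa \in \zd},
$$
so once $V_\phi f$ is shown to lie in $W(L^{\Phi _1,\Phi _2}_{(\omega)})$ and to be continuous, Lemma \ref{Lemma:No2.8} gives the conclusion directly.

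First I would apply Theorem \ref{Thm:ModWienerInv} with $\phi _1=\phi _0$ the standard Gaussian and $\phi _2=\phi$. Since $\phi _0\in M^{r_0}_{(v)}(\rd)$ with dual windows in $M^{r_0}_{(v)}(\rd)$, and $\phi$ satisfies the same hypotheses by assumption, the theorem applies and yields $V_\phi f\in W(L^{\Phi _1,\Phi _2}_{(\omega)})$ together with
\begin{equation*}
\nm {V_\phi f}{W(L^{\Phi _1,\Phi _2}_{(\omega)})}
\lesssim
\nm {V_{\phi _0} f}{L^{\Phi _1,\Phi _2}_{(\omega)}}
=
\nm f{M^{\Phi _1,\Phi _2}_{(\omega)}},
\end{equation*}
the final equality being the definition \eqref{MNE} of the modulation space quasi-norm.

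Next I would verify that $V_\phi f$ is continuous, so that its samples are well defined and Lemma \ref{Lemma:No2.8} is applicable. By Corollary \ref{Cor:ModWienerInv} we have $f\in M^\infty_{(\omega)}(\rd)$, and since $\phi\in M^{r_0}_{(v)}(\rd)\subseteq M^1_{(v)}(\rd)$ this continuity follows from standard short-time Fourier transform theory; alternatively one may invoke the reproducing formula used in the proof of Theorem \ref{Thm:ModWienerInv}, writing $V_\phi f$ as a constant multiple of the convolution $(V_{\phi _0}f)*(V_\phi \phi _0)$, where $V_{\phi _0}f$ is locally bounded and $V_\phi \phi _0\in W(L^1,L^{r_0}_{(v)})$ decays, so that the convolution is continuous.

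Finally I would apply Lemma \ref{Lemma:No2.8} to $F=V_\phi f$ with $\alpha =\beta =\ep$. Its samples $c_F(k,\kappa)=V_\phi f(\ep k,\ep \kappa)$ are exactly the entries of $C_\phi^\ep f$, so the lemma gives $C_\phi^\ep f\in \ell^{\Phi _1,\Phi _2}_{(\omega)}(\zdd )$ and
\begin{equation*}
\nm {C_\phi^\ep f}{\ell^{\Phi _1,\Phi _2}_{(\omega)}}
\le
C_\omega (C_\ep C_\ep)^{\frac 1{r_0}}
\nm {V_\phi f}{W(L^{\Phi _1,\Phi _2}_{(\omega)})}
\lesssim
\nm f{M^{\Phi _1,\Phi _2}_{(\omega)}},
\end{equation*}
which is the claimed estimate, and the continuity of $C_\phi^\ep$ follows. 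I expect the only genuinely delicate point to be the continuity of $V_\phi f$ (together with the observation that moderateness of $\omega$ lets the sampled weight $\omega(\ep k,\ep \kappa)$ be compared with $\omega(k,\kappa)$, the discrepancy being absorbed into $C_\omega$); everything else is a direct chaining of the cited results.
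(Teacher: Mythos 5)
Your proposal is correct and follows essentially the same route as the paper's proof, which likewise chains Lemma \ref{Lemma:No2.8} (with $\alpha =\beta =\ep$) with Theorem \ref{Thm:ModWienerInv} and the equivalence $\nm {V_\phi f}{L^{\Phi _1,\Phi _2}_{(\omega)}}\asymp \nm f{M^{\Phi _1,\Phi _2}_{(\omega)}}$; your choice $\phi _1=\phi _0$, $\phi _2=\phi$ in Theorem \ref{Thm:ModWienerInv} is only a cosmetic rearrangement of the same ingredients. You actually supply slightly more detail than the paper, which asserts the continuity of $V_\phi f$ and the comparison of $\omega (\ep k,\ep \kappa )$ with $\omega (k,\kappa )$ without comment.
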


\par

\begin{proof}
Since $V_{\phi}f$ is continuous, we have by Lemma \ref{Lemma:No2.8} with $\alpha=\beta=
\ep $ and Theorem \ref{Thm:ModWienerInv} that
\begin{multline*}
  \Vert C_{\phi}^{\ep }
  f\Vert_{\ell^{\Phi _1,\Phi _2}_{(\omega)}}
  = \Vert V_\phi f(\ep \cdo)\Vert_{\ell^{\Phi _1,\Phi _2}_{(\omega)}}
  \lesssim
  \Vert V_\phi f\Vert_{W(L^{\Phi _1,\Phi _2}_{(\omega)})}
  \\[1ex]
  \lesssim
 \Vert V_\phi f\Vert_{L^{\Phi _1,\Phi _2}_{(\omega)}}
  \asymp \Vert f\Vert_{M^{\Phi _1,\Phi _2}_{(\omega)}},
\end{multline*}
which completes the proof.
\end{proof}

\par

\begin{thm}\label{Thm:SynthesisOp}
Let $\ep >0$, $\Phi _1,\Phi _2$ be quasi-Young functions of order $r_0 \in(0,1]$,
$\omega, v \in \mascP_E (\rdd )$ be such that $\omega$ is $v$-moderate
and let $\psi \in M^{r_0}_{(v)}(\rd)$ with dual window in $M^{r_0}_{(v)}(\rd)$ with respect to
$\ep \zz {2d}$.
Then the synthesis operator $D_{\psi}^{\ep }$ is continuous from
$\ell^{\Phi _1,\Phi _2}_{(\omega)}(\zdd)$ into
$M^{\Phi _1,\Phi _2}_{(\omega)}(\rd)$, and
$$
\Vert D_{\psi}^{\ep }
c\Vert_{M^{\Phi _1,\Phi _2}_{(\omega)}}
\lesssim
\Vert c\Vert_{\ell^{\Phi _1,\Phi _2}_{(\omega)}},
\quad
c\in \ell^{\Phi _1,\Phi _2}_{(\omega)}(\zdd).
$$
\end{thm}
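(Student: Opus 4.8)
The plan is to prove continuity of the synthesis operator $D_\psi^\ep$ by dualizing the analysis operator estimate from Theorem \ref{Thm:AnalysisOp}, but since we are in the quasi-Banach setting where duality arguments are delicate, I would instead argue directly via the semi-discrete convolution estimates from Section \ref{sec3}. The key observation is that for $c\in \ell^{\Phi _1,\Phi _2}_{(\omega)}(\zdd)$, the function $D_\psi^\ep c = \sum_{j,\iota\in \ep \zd} c(j,\iota)e^{i\scal \cdo \iota}\psi(\cdo -j)$ is an element whose membership in $M^{\Phi _1,\Phi _2}_{(\omega)}(\rd)$ is tested by applying $V_{\phi _0}$ and estimating in $L^{\Phi _1,\Phi _2}_{(\omega)}$. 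So the first step is to compute $V_{\phi _0}(D_\psi^\ep c)$ and bound it pointwise.

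\medspace

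First I would write out the short-time Fourier transform of the synthesized function. Using linearity and the standard covariance property of the STFT under time-frequency shifts, one gets
\begin{equation*}
V_{\phi _0}(D_\psi^\ep c)(x,\xi)
=
\sum_{j,\iota\in \ep \zd}
c(j,\iota)
e^{i\scal j{(\xi-\iota)}}
(V_{\phi _0}\psi)(x-j,\xi-\iota).
\end{equation*}
Taking absolute values and setting $\Theta(x,\xi) = |(V_{\phi _0}\psi)(x,\xi)|$, this is bounded by the semi-discrete convolution
\begin{equation*}
|V_{\phi _0}(D_\psi^\ep c)(x,\xi)|
\le
(|c| *_{\ep } \Theta)(x,\xi),
\end{equation*}
where $|c|$ is regarded as an element of $\ell^{\Phi _1,\Phi _2}_{(\omega)}(\ep \zdd)$ and $\Theta\in W(L^{r_0}_{(v)})$ since $\psi\in M^{r_0}_{(v)}(\rd)$ (the latter being equivalent, via \cite[Proposition 1.15$'$]{Toft22}, to $V_{\phi _0}\psi\in W(L^1,L^{r_0}_{(v)})$, hence the Wiener-norm samples are controlled).

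\medspace

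The second step is to invoke Lemma \ref{Lemma:No2.10}, which gives precisely the needed estimate for the semi-discrete convolution of an $\ell^{\Phi _1,\Phi _2}_{(\omega _1)}$ sequence against a $W(L^{r_0}_{(v)})$ function, landing in the Wiener space $W(L^{\Phi _1,\Phi _2}_{(\omega)})$. Combined with the elementary embedding \eqref{Eq:Wiener} that $\nm{F}{L^{\Phi _1,\Phi _2}_{(\omega)}}\le \nm{F}{W(L^{\Phi _1,\Phi _2}_{(\omega)})}$, this yields
\begin{equation*}
\nm {D_\psi^\ep c}{M^{\Phi _1,\Phi _2}_{(\omega)}}
=
\nm {V_{\phi _0}(D_\psi^\ep c)}{L^{\Phi _1,\Phi _2}_{(\omega)}}
\lesssim
\nm {|c| *_{\ep }\Theta}{W(L^{\Phi _1,\Phi _2}_{(\omega)})}
\lesssim
\nm c{\ell^{\Phi _1,\Phi _2}_{(\omega)}}
\nm \Theta{W(L^{r_0}_{(v)})},
\end{equation*}
and the last factor is finite and independent of $c$. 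This completes the argument.

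\medspace

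The main obstacle I anticipate is the bookkeeping in the convolution step: Lemma \ref{Lemma:No2.10} is phrased with three weights $\omega _0,\omega _1,\omega _2$ satisfying the submultiplicativity condition \eqref{Eq:StandardWeightCond2}, so I must choose them correctly. The natural choice is $\omega _0=\omega$, $\omega _1=\omega$, and $\omega _2=v$ so that the $v$-moderateness of $\omega$ supplies \eqref{Eq:StandardWeightCond2} in the form $\omega(X+Y)\lesssim \omega(X)v(Y)$; this aligns the $\ell^{\Phi _1,\Phi _2}_{(\omega)}$ sequence norm with $W(L^{r_0}_{(v)})$ for the convolution kernel. A secondary technical point, already handled in the proof of Theorem \ref{Thm:LebWindowTransf} and in the preceding remark, is verifying that the canonical dual window construction indeed places $V_{\phi _0}\psi$ in $W(L^1,L^{r_0}_{(v)})$; this follows from $\psi\in M^{r_0}_{(v)}(\rd)$ together with the norm equivalence $\nm{V_{\phi _0}\psi}{W(L^1,L^{r_0}_{(v)})}\asymp \nm{\psi}{M^{r_0}_{(v)}}$. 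Once the weights are matched, the remaining steps are routine applications of the machinery already established in Sections \ref{sec3} and \ref{sec4}.
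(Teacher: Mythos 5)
Your proof is correct and follows essentially the same route as the paper's: bound $|V_{\phi _0}(D_\psi ^\ep c)|$ pointwise by the semi-discrete convolution $|c|*_{\ep}|V_{\phi _0}\psi |$ (the unimodular phase factor, which the paper in fact drops, is immaterial after taking absolute values), then apply Lemma \ref{Lemma:No2.10} with the weight choice $\omega _0=\omega _1=\omega$, $\omega _2=v$ together with the embedding \eqref{Eq:Wiener}. The one slip is your justification that $\Theta =|V_{\phi _0}\psi |$ lies in $W(L^{r_0}_{(v)})$: the cited \cite[Proposition 1.15$'$]{Toft22} controls the local-$L^1$ Wiener norm $W(L^1,L^{r_0}_{(v)})$, which is weaker than the sup-based space $W(L^{r_0}_{(v)})=W(L^\infty ,L^{r_0}_{(v)})$ that Lemma \ref{Lemma:No2.10} requires, and the correct internal reference is Theorem \ref{Thm:ModWienerInv} (equivalently, Proposition \ref{Thm:OppWienEst1} applied to $f=\psi$ with $L^{r_0}_{(v)}$ in place of $L^{\Phi _1,\Phi _2}_{(\omega )}$), which gives $\nm {V_{\phi _0}\psi}{W(L^{r_0}_{(v)})}\lesssim \nm {\psi}{M^{r_0}_{(v)}}<\infty$ and is precisely how the paper closes its proof.
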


\par

\begin{proof}
Let $\phi _{0}$ be the standard Gaussian window. We have to
show that $V_{\phi _{0}}(D_{\psi}^{\ep }c)
\in L^{\Phi _1,\Phi _2}_{(\omega)}(\rdd )$ when
$c\in \ell^{\Phi _1,\Phi _2}_{(\omega)}(\zdd)$.
Since
$$
\left(
V_{\phi _{0}}
(e^{i\ep \scal \cdo \kappa} \psi(\cdo-\ep  k))
\right)
(x,\xi)
=V_{\phi _{0}}\psi(x-\ep  k,\xi-\ep \kappa),
$$
we get
\begin{multline*}
|V_{\phi _{0}}(D_\psi^{\ep }c)(x,\xi)|
=\left|
|V_{\phi _{0}}
\left(
\sum_{k,\kappa\in\zd}
c(k,\kappa)
(e^{i\ep \scal \cdo \kappa} \psi(\cdo-\ep  k)
\right)(x,\xi)
\right|
\\[1ex]
= \left|
\sum_{k,\kappa\in\zd}
c(k,\kappa)\left(V_{\phi _{0}}
(e^{i\ep \scal \cdo \kappa} \psi(\cdo-\ep  k))
\right)(x,\xi)
\right|
\\[1ex]
= \left|\sum_{k,\kappa\in\zd}
c(k,\kappa)(V_{\phi _{0}}\psi)(x-\ep  k,\xi-\ep  \kappa)\right|
\\[1ex]
\leq \sum_{k,\kappa\in\zd}
\left|c(k,\kappa)(V_{\phi _{0}}\psi)(x-\ep  k,\xi-\ep  \kappa)
\right|
=(|c|*_{\ep } |V_{\phi _{0}}\psi|)(x,\xi).
\end{multline*}
and Lemma \ref{Lemma:No2.10} implies that
\begin{multline*}
\Vert D_{\psi}^{\ep }
c\Vert_{M^{\Phi _1,\Phi _2}_{(\omega)}}=
\Vert V_{\phi _0}(D_{\psi}^{\ep } c)\Vert_{L^{\Phi _1,\Phi _2}_{(\omega)}}
\leq
\Vert V_{\phi _0}(D_{\psi}^{\ep } c)\Vert _{W(L^{\Phi _1,\Phi _2}_{(\omega)})}
\\[1ex]
\leq \Vert|c|*_{\ep } |V_{\phi _{0}}\psi|\Vert_{W(L^{\Phi _1,\Phi _2}_{(\omega)})}
\leq
C\Vert c\Vert_{\ell^{\Phi _1,\Phi _2}_{(\omega)}}
\Vert V_{\phi _0}\psi\Vert_{W(L^{r_0}_{(v)})},
\end{multline*}
and the result follows from Theorem \ref{Thm:ModWienerInv}.
\end{proof}

\par

The next theorem is the main result of the section, and shows that the Gabor
analysis in \cite{GaSa} for modulation spaces also holds for quasi-Orlicz
modulation spaces.

\par

\begin{thm}\label{Thm:Gabor}
Let $\ep >0$, $\Phi _1,\Phi _2$ be quasi-Young functions of order
$r_0 \in(0,1]$, $\omega, v \in \mascP_E (\rdd )$ be such that $\omega$
is $v$-moderate and let $\phi,\psi\in M^{r_0}_{(v)}(\rd)$ be such that
\begin{equation}\label{Eq:DualFrames}
\{ e^{i\ep \scal \cdo \kappa}\phi (\cdo -\ep k) \} _{k,\kappa \in \zz d}
\quad \text{and}\quad
\{ e^{i\ep \scal \cdo \kappa}\psi (\cdo -\ep k) \} _{k,\kappa \in \zz d}
\end{equation}
are dual frames to each others. Then the following is true:
\begin{enumerate}
\item The Gabor frame operator $S_{\phi,\psi}^\ep  = D_{\psi}^{\ep } \circ
    C_{\phi}^{\ep }$
    is the identity operator on $M_{(\omega)}^{\Phi _1, \Phi _2}(\rd)$;

\vrum

\item If $f\in M_{(\omega)}^{\Phi _1, \Phi _2}(\rd)$, then
\begin{align*}
f&=\sum_{k,\kappa\in\zd }
(V_\psi f)(\ep  k,\ep \kappa)
e^{i\ep  \scal \cdo \kappa }\phi(\cdo -\ep  k)
\\[1ex]
&=\sum_{k,\kappa\in\zd }
(V_\phi f)(\ep  k,\ep \kappa)
e^{i\ep  \scal \cdo \kappa }\psi(\cdo -\ep  k),
\end{align*}
with unconditionally convergence in $M^{\Phi _1,\Phi _2}_{(\omega)}(\rd)$
when
$\mascS(\rdd )$ is dense in $L^{\Phi _1,\Phi _2}(\rdd )$,
and with convergence in $M^\infty_{(\omega)}(\rd)$ with respect to
the weak$^*$ topology otherwise.
\end{enumerate}

\par

Furthermore,
\begin{multline}\label{Eq:EquivModNormDisc}
\Vert \{ (V_\phi f)(\ep  k,\ep  \kappa)\}_{k,\kappa\in \zd}
\Vert_{\ell_{(\omega)}^{\Phi _1, \Phi _2}}
\asymp
\Vert \{ (V_\psi f)(\ep  k,\ep  \kappa)\}_{k,\kappa\in \zd}
\Vert_{\ell_{(\omega)}^{\Phi _1, \Phi _2}}
\\[1ex]
\asymp \Vert f \Vert_{M^{\Phi _1, \Phi _2}_{(\omega)}}.
\end{multline}
\end{thm}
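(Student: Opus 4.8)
The plan is to assemble Theorem \ref{Thm:Gabor} from the machinery already developed in this section, treating parts (1) and (2) and the norm equivalence \eqref{Eq:EquivModNormDisc} as consequences of the continuity of the analysis and synthesis operators together with the window-invariance results. First I would establish the final norm equivalence \eqref{Eq:EquivModNormDisc}, since the two statements (1) and (2) will follow from it essentially by the standard frame identity. The chain of estimates proceeds as follows. By Theorem \ref{Thm:AnalysisOp} the analysis operator $C_\phi^\ep$ is continuous from $M^{\Phi_1,\Phi_2}_{(\omega)}(\rd)$ into $\ell^{\Phi_1,\Phi_2}_{(\omega)}(\zdd)$, which gives
$$
\Vert \{ (V_\phi f)(\ep k,\ep \kappa)\}\Vert_{\ell^{\Phi_1,\Phi_2}_{(\omega)}}
\lesssim
\Vert f\Vert_{M^{\Phi_1,\Phi_2}_{(\omega)}},
$$
and the same estimate holds with $\phi$ replaced by $\psi$, since $\psi\in M^{r_0}_{(v)}(\rd)$ with dual window in $M^{r_0}_{(v)}(\rd)$ as well. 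For the reverse inequality I would invoke Theorem \ref{Thm:SynthesisOp}: applying the synthesis operator $D_\psi^\ep$ to the sequence $C_\phi^\ep f$ and using that $\{e^{i\ep\scal{\cdo}{\kappa}}\phi(\cdo-\ep k)\}$ and $\{e^{i\ep\scal{\cdo}{\kappa}}\psi(\cdo-\ep k)\}$ are dual frames, one recovers $f = D_\psi^\ep C_\phi^\ep f$, whence
$$
\Vert f\Vert_{M^{\Phi_1,\Phi_2}_{(\omega)}}
=
\Vert D_\psi^\ep C_\phi^\ep f\Vert_{M^{\Phi_1,\Phi_2}_{(\omega)}}
\lesssim
\Vert \{ (V_\phi f)(\ep k,\ep \kappa)\}\Vert_{\ell^{\Phi_1,\Phi_2}_{(\omega)}}.
$$
Combining these inequalities, and their analogues with the roles of $\phi$ and $\psi$ interchanged, yields the three-way equivalence in \eqref{Eq:EquivModNormDisc}.

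Next I would address the frame identity in part (1). The dual-frame hypothesis \eqref{Eq:DualFrames} means precisely that $D_\psi^\ep C_\phi^\ep = D_\phi^\ep C_\psi^\ep = \mathrm{id}$ on $L^2(\rd)$, and the point is to transfer this identity from the dense subspace to all of $M^{\Phi_1,\Phi_2}_{(\omega)}(\rd)$. Here I would use Corollary \ref{Cor:ModWienerInv}, which embeds $M^{\Phi_1,\Phi_2}_{(\omega)}(\rd)$ between $M^{r_0}_{(\omega)}(\rd)$ and $M^\infty_{(\omega)}(\rd)$, together with the fact that the composition $S^\ep_{\phi,\psi} = D_\psi^\ep\circ C_\phi^\ep$ is a bounded operator on $M^{\Phi_1,\Phi_2}_{(\omega)}(\rd)$ by the two preceding theorems. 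Since $S^\ep_{\phi,\psi}$ coincides with the identity on $L^2\cap M^{\Phi_1,\Phi_2}_{(\omega)}$ (indeed on the dense subspace $\maclH_\flat(\rd)$ or $\Sigma_1(\rd)$), a density-and-continuity argument extends the identity to the whole space, giving (1).

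For part (2), the Gabor expansions are simply the two ways of writing $f = S^\ep_{\phi,\psi}f$ and $f = S^\ep_{\psi,\phi}f$ unpacked via the definitions of the analysis and synthesis operators. The convergence statement requires care: when $\mascS(\rdd)$ is dense in $L^{\Phi_1,\Phi_2}(\rdd)$, the finite partial sums approximate $f$ in $M^{\Phi_1,\Phi_2}_{(\omega)}$-norm, which gives unconditional norm convergence; otherwise—when the Orlicz space lacks this density (the analogue of the $L^\infty$ obstruction in the classical theory)—one falls back to weak$^*$ convergence in $M^\infty_{(\omega)}(\rd)$, using the embedding from Corollary \ref{Cor:ModWienerInv}. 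I expect the main obstacle to be precisely this convergence dichotomy: establishing that the series converges unconditionally in quasi-norm under the density hypothesis, and only weak$^*$ in its absence, requires distinguishing the two regimes cleanly and invoking the semi-discrete convolution estimate of Lemma \ref{Lemma:No2.10} to control tails of the Gabor series uniformly. The purely algebraic parts (the frame identity and the norm equivalence) follow routinely from the operator continuity already in hand.
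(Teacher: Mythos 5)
Your norm-equivalence chain is essentially the paper's own argument: once $f = D_\psi^{\ep} C_\phi^{\ep} f$ is known on $M^{\Phi _1,\Phi _2}_{(\omega)}(\rd)$, Theorems \ref{Thm:AnalysisOp} and \ref{Thm:SynthesisOp} give \eqref{Eq:EquivModNormDisc} by the two-sided operator estimates, and interchanging $\phi$ and $\psi$ yields the remaining relations. Note, however, that your presentation is circular: you announce that (1) will follow from the norm equivalence, yet your reverse inequality for \eqref{Eq:EquivModNormDisc} already invokes $f = D_\psi^{\ep} C_\phi^{\ep} f$, which is part (1). The frame identity must come first, as it does in the paper, so everything hinges on how you establish it.

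And there lies the genuine gap. For (1) you propose a density-and-continuity extension from $\maclH _\flat (\rd)$ or $\Sigma _1(\rd)$, asserting these are dense in $M^{\Phi _1,\Phi _2}_{(\omega)}(\rd)$. That density is not established anywhere and is in fact false in precisely the regime your own dichotomy in part (2) acknowledges: when $\mascS (\rdd)$ fails to be dense in $L^{\Phi _1,\Phi _2}(\rdd)$ (the $L^\infty$-type obstruction, e.g.\ $\Phi = \Phi _\infty$ giving $M^\infty$-type spaces), nice functions are not dense in the Orlicz modulation space, so the extension argument collapses in exactly the cases the theorem must still cover. The paper avoids density altogether: by \cite[Corollary 12.2.6]{Gc2}, the dual-frame hypothesis already yields $D_\psi^{\ep} \circ C_\phi^{\ep} = \mathrm{id}$ on all of $M^\infty _{(\omega)}(\rd)$ (Gr{\"o}chenig's result is proved by weak$^*$ arguments and needs no density input), and Corollary \ref{Cor:ModWienerInv} gives the inclusion $M^{\Phi _1,\Phi _2}_{(\omega)}(\rd) \subseteq M^\infty _{(\omega)}(\rd)$, so the identity holds on the Orlicz modulation space by \emph{restriction}, pointwise, rather than by extension from a dense subspace. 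You cite the right embedding but deploy it in the wrong direction; replacing your density step with this restriction argument repairs the proof, after which the rest of your proposal (including the convergence dichotomy in (2), which the paper's proof leaves largely implicit) goes through.
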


\par

\begin{proof}
Since \eqref{Eq:DualFrames} are dual frames, it follows that
$D_{\psi}^{\ep }\circ C_{\phi}^{\ep }$ is the identity operator
on $M^\infty _{(\omega )}(\rr d)$, in view of \cite[Corollary 12.2.6]{Gc2}. A combination
of this fact and $M^{\Phi _1,\Phi _2}_{(\omega)}(\rd)\subseteq M^\infty _{(\omega )}(\rr d)$
shows that $f=D_{\psi}^{\ep }\circ
C_{\phi}^{\ep }$ holds for
all $f\in M^{\Phi _1,\Phi _2}_{(\omega)}(\rd)$. By Theorems
\ref{Thm:AnalysisOp} and \ref{Thm:SynthesisOp}, the norm equivalence
between the first and last
expressions in \eqref{Eq:EquivModNormDisc} follows from
\begin{multline*}
\Vert f\Vert_{M^{\Phi _1,\Phi _2}_{(\omega)}}=
\Vert( D_{\psi}^{\ep }\circ C_{\phi}^{\ep }) f\Vert_{M^{\Phi _1,\Phi _2}_{(\omega)}}
\leq
\Vert D_{\psi}^{\ep }
\Vert_{\maclB(\ell^{\Phi _1,\Phi _2}_{(\omega)},M^{\Phi _1,\Phi _2}_{(\omega)})}
\Vert C_{\phi}^{\ep } f\Vert_{\ell^{\Phi _1,\Phi _2}_{(\omega)}}
\\[1ex]
\leq \Vert D_{\psi}^{\ep }
\Vert_{\maclB(\ell^{\Phi _1,\Phi _2}_{(\omega)},
M^{\Phi _1,\Phi _2}_{(\omega)})} \Vert C_{\phi}^{\ep }
\Vert_{\maclB(M^{\Phi _1,\Phi _2}_{(\omega)},
\ell^{\Phi _1,\Phi _2}_{(\omega)})} \Vert f\Vert_{M^{\Phi _1,\Phi _2}_{(\omega)}}.
\end{multline*}
By interchanging the roles for $\phi$ and $\psi$, we deduce the other relations
in \eqref{Eq:EquivModNormDisc}.
\end{proof}

\par

\begin{rem}
Let $\omega\in \mascP_E (\rdd )$, $\Phi _{0,1},\Phi _{0,2}$ be Young functions,
and $\Phi _1$ and $\Phi _2$ be quasi-Young functions of order $r_0\in (0,1]$
with respect to $\Phi _{0,1}$ and $\Phi _{0,2}$, respectively.

\par

Since $\mascS(\rdd )$ is continuously embedded in $L^{\Phi _1, \Phi _2}(\rdd )$,
and that $\Sigma_1(\rdd )$ is dense in $\mascS(\rdd )$, it follows that $\Sigma
_1(\rdd )$ is dense in $L^{\Phi _1, \Phi _2}(\rdd )$ when $\mascS(\rdd )$ is dense
in $L^{\Phi _1, \Phi _2}(\rdd )$, according to (2) in Theorem \ref{Thm:Gabor}.

\par

By straight-forward computations it follows that $\Sigma_1(\rdd )$ is dense in
$L^{\Phi _1, \Phi _2}_{(\omega)}(\rdd )$ when $\mascS(\rdd )$ is dense in
$L^{\Phi _1, \Phi _2}(\rdd )$.

\par

A sufficient condition for $\mascS(\rdd )$ to be dense in $L^{\Phi _{0,1},\Phi _{0,2}}(\rdd )$
and in $L^{\Phi _{1},\Phi _{2}}(\rdd )$ is that $\Phi _{0,1}$ and $\Phi _{0,2}$ fullfill the
so-called $\Delta_2$-condition in \cite{SchF}. In particular, this is true when
$\Phi _j(t)\gtrsim t^{\theta},\ j=1,2$ near the origin, for some $\theta >0$.
\end{rem}

\par

\subsection{Some consequences}
Next we present some consequences of the previous results, and begin with
the following invariance of the $M^{\Phi _1,\Phi _2}_{(\omega)}(\rd)$ norm
with respect to the choice of $\phi _1$ and $\phi _2$ in
Theorem \ref{Thm:ModWienerInv}.

\par

\begin{thm}
Let $\Phi _1,\Phi _2$ be quasi-Young functions of order $r_0 \in(0,1]$,
$\omega, v \in \mascP_E (\rdd )$ be such that $\omega$ is $v$-moderate,
and let $\phi \in M^{r_0}_{(v)}(\rd)$ with dual window in $M^{r_0}_{(v)}(\rd)$.
Then $f\mapsto \nm {V_\phi f}{L^{\Phi _1,\Phi _2}_{(\omega)}}$ and
$f \mapsto \nm {V_\phi f}{W(L^{\Phi _1,\Phi _2}_{(\omega)})}$ are quasi-norms on $\Sigma_1 '(\rd)$
which are equivalent to the quasi-norm $f\mapsto \nm {f}{M^{\Phi _1,\Phi _2}_{(\omega)}}$.
\end{thm}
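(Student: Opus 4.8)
The plan is to observe that the substantive analytic work has already been carried out in Theorem \ref{Thm:ModWienerInv} and in the elementary inequality \eqref{Eq:Wiener}, so that the statement reduces to an assembly argument organised around the Gaussian window $\phi _0(x)=\pi^{-\frac d4}e^{-\frac 12\cdot {|x|^2}}$, for which $\nm {V_{\phi _0}f}{L^{\Phi _1,\Phi _2}_{(\omega )}}=\nm f{M^{\Phi _1,\Phi _2}_{(\omega )}}$ holds by Definition \ref{Def:Orliczmod}. First I would record that $\phi _0$ is admissible in the sense required by Theorem \ref{Thm:ModWienerInv}, i.e. $\phi _0\in M^{r_0}_{(v)}(\rd)$ with canonical dual windows again in $M^{r_0}_{(v)}(\rd)$ for sufficiently dense lattices; this is precisely the content of the Remark following Theorem \ref{Thm:LebWindowTransf}. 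Thus both $\phi$ and $\phi _0$ may be fed, in either order, into Theorem \ref{Thm:ModWienerInv}.

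Second, I would produce the chain of comparisons. Applying Theorem \ref{Thm:ModWienerInv} with the ordered pair $(\phi ,\phi )$ yields $\nm {V_\phi f}{W(L^{\Phi _1,\Phi _2}_{(\omega )})}\lesssim \nm {V_\phi f}{L^{\Phi _1,\Phi _2}_{(\omega )}}$, while \eqref{Eq:Wiener} gives the reverse bound $\nm {V_\phi f}{L^{\Phi _1,\Phi _2}_{(\omega )}}\le \nm {V_\phi f}{W(L^{\Phi _1,\Phi _2}_{(\omega )})}$; hence the two $\phi$-functionals are already equivalent. To tie them to the $M$-norm I would apply Theorem \ref{Thm:ModWienerInv} to $(\phi _0,\phi )$, obtaining $\nm {V_\phi f}{W(L^{\Phi _1,\Phi _2}_{(\omega )})}\lesssim \nm {V_{\phi _0}f}{L^{\Phi _1,\Phi _2}_{(\omega )}}=\nm f{M^{\Phi _1,\Phi _2}_{(\omega )}}$, and to $(\phi ,\phi _0)$ followed by \eqref{Eq:Wiener}, obtaining $\nm f{M^{\Phi _1,\Phi _2}_{(\omega )}}=\nm {V_{\phi _0}f}{L^{\Phi _1,\Phi _2}_{(\omega )}}\le \nm {V_{\phi _0}f}{W(L^{\Phi _1,\Phi _2}_{(\omega )})}\lesssim \nm {V_\phi f}{L^{\Phi _1,\Phi _2}_{(\omega )}}$. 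Together these sandwich each of the two functionals between constant multiples of $\nm f{M^{\Phi _1,\Phi _2}_{(\omega )}}$, which is the asserted equivalence.

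Third, I would verify the quasi-norm axioms. Since $V_\phi$ is linear and both $L^{\Phi _1,\Phi _2}_{(\omega )}$ and $W(L^{\Phi _1,\Phi _2}_{(\omega )})$ are $r_0$-Banach spaces, homogeneity and the $r_0$-triangle inequality \eqref{Eq:WeakTriangle2} transfer to the pullback functionals $f\mapsto \nm {V_\phi f}{L^{\Phi _1,\Phi _2}_{(\omega )}}$ and $f\mapsto \nm {V_\phi f}{W(L^{\Phi _1,\Phi _2}_{(\omega )})}$; definiteness follows from the lower bound just proved together with the fact that $\nm \cdo {M^{\Phi _1,\Phi _2}_{(\omega )}}$ is a genuine quasi-norm by Theorem \ref{Thm:Bargmann}. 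I would also note that, as $\omega$ is $v$-moderate, $M^{\Phi _1,\Phi _2}_{(\omega )}(\rd)\hookrightarrow M^\infty _{(\omega )}(\rd)$ by Corollary \ref{Cor:ModWienerInv}, and $\phi \in M^{r_0}_{(v)}(\rd)\subseteq M^1_{(v)}(\rd)$, so $V_\phi f$ is well defined on the finiteness locus of these functionals, which by the equivalence coincides with $M^{\Phi _1,\Phi _2}_{(\omega )}(\rd)$.

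I expect no serious obstacle: the argument is purely an assembly. The only points needing genuine care are confirming that the Gaussian meets the admissibility hypothesis of Theorem \ref{Thm:ModWienerInv} (dual windows in $M^{r_0}_{(v)}$ for a dense enough lattice) and checking that the pullback of the amalgam quasi-norm under the linear map $V_\phi$ really inherits the $r_0$-triangle inequality. The latter holds because the coefficient map $F\mapsto a_{F,\omega }$ of Definition \ref{DefAmalgam} is subadditive and monotone in the modulus of $F$, so the amalgam quasi-norm, being built from the $r_0$-quasi-norm on $\ell^{\Phi _1,\Phi _2}$, is itself an $r_0$-quasi-norm.
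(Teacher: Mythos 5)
Your proposal is correct and takes essentially the same route as the paper, whose entire proof is that the theorem ``is an immediate consequence of \eqref{Eq:Wiener} and Theorem \ref{Thm:ModWienerInv}'' --- precisely the sandwich you assemble by applying Theorem \ref{Thm:ModWienerInv} to the window pairs $(\phi ,\phi )$, $(\phi _0,\phi )$ and $(\phi ,\phi _0)$ together with the trivial bound \eqref{Eq:Wiener}. Your supplementary checks (admissibility of the Gaussian via the remark following Theorem \ref{Thm:LebWindowTransf}, and the transfer of the quasi-norm axioms through the linear map $V_\phi$) merely make explicit details the paper leaves implicit.
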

Recall that $\nm {f}{M^{\Phi _1,\Phi _2}_{(\omega)}}
= \nm {V_{\phi _0} f}{L^{\Phi _1,\Phi _2}_{(\omega)}}$,
when $\phi _0(x)=\pi^{-\frac d4}e^{-\frac 12 |x|^2}$.

\par

\begin{proof}
The result is an immediate consequence of \eqref{Eq:Wiener} and
Theorem \ref{Thm:ModWienerInv}.
\end{proof}

\par

\begin{thm}\label{Thm:OrliczModInvariance}
Suppose that $\omega\in \mascP_E(\rdd )$, $\Phi _k$ and $\Psi _k$ are quasi
Young functions such that
\begin{equation} \label{Eq:delta}
    \lim \limits _{t\to 0^+}\frac{\Psi _k(t)}{\Phi _k(t)}
\end{equation}
exist and are finite, $k=1,2$. Then the following is true:
\begin{enumerate}
    \item $\ell^{\Phi _1, \Phi _2}_{(\omega)}(\zdd)$
    is continuously embedded in $\ell^{\Psi _1, \Psi _2}_{(\omega)}(\zdd)$;

\vrum

    \item $M^{\Phi _1,\Phi _2}_{(\omega)}(\rd)$ is continuously embedded
    in $M^{\Psi _1,\Psi _2}_{(\omega)}(\rd)$.
\end{enumerate}
\end{thm}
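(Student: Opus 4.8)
The plan is to reduce the theorem to the two finiteness-of-limit conditions in \eqref{Eq:delta} via a comparison of the underlying Young functions, and then to transport the resulting sequence-space embedding to the modulation spaces using the Gabor characterization \eqref{Eq:EquivModNormDisc} from Theorem \ref{Thm:Gabor}. First I would observe that (2) follows from (1) almost immediately: fix $\ep>0$ and windows $\phi,\psi\in M^{r_0}_{(v)}(\rd)$ giving dual Gabor frames as in \eqref{Eq:DualFrames}, which exist by Lemma \ref{Lemma:GoodFrames0} and Remark \ref{Remark:GoodFrames0}. For $f\in M^{\Phi _1,\Phi _2}_{(\omega)}(\rd)$, Theorem \ref{Thm:Gabor} gives
$$
\nm f{M^{\Phi _1,\Phi _2}_{(\omega)}}
\asymp
\nm {\{ (V_\phi f)(\ep k,\ep \kappa)\}_{k,\kappa}}{\ell^{\Phi _1,\Phi _2}_{(\omega)}},
$$
and the same with $\Psi _j$ in place of $\Phi _j$. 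If (1) holds, then the coefficient sequence lies in $\ell^{\Psi _1,\Psi _2}_{(\omega)}(\zdd)$ with a controlled norm, so by the $\Psi$-version of Theorem \ref{Thm:Gabor} we get $f\in M^{\Psi _1,\Psi _2}_{(\omega)}(\rd)$ with $\nm f{M^{\Psi _1,\Psi _2}_{(\omega)}}\lesssim \nm f{M^{\Phi _1,\Phi _2}_{(\omega)}}$. This is the clean step, and it is exactly the mechanism by which the authors advertise that \eqref{Eq:limcond} yields \eqref{Eq:Modinc}.

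The substance therefore lies in (1), the discrete embedding. I would first dispose of the weight: the map $a\mapsto a\cdot \omega$ is an isometry from $\ell^{\Phi _1,\Phi _2}_{(\omega)}$ onto $\ell^{\Phi _1,\Phi _2}$, so it suffices to prove $\ell^{\Phi _1,\Phi _2}(\zdd)\subseteq \ell^{\Psi _1,\Psi _2}(\zdd)$ continuously. Writing $\Phi _k(t)=\Phi _{0,k}(t^{r_0})$ and $\Psi _k(t)=\Psi _{0,k}(t^{r_0})$ for Young functions $\Phi _{0,k},\Psi _{0,k}$, and applying \eqref{Eq:OrliczNormTransfers}, the claim reduces to the Banach-space embedding $\ell^{\Phi _{0,1},\Phi _{0,2}}(\zdd)\subseteq \ell^{\Psi _{0,1},\Psi _{0,2}}(\zdd)$ for genuine Young functions. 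The key analytic point is that the finiteness of $\lim_{t\to 0^+}\Psi _k(t)/\Phi _k(t)$ forces a pointwise comparison $\Psi _{0,k}(t)\le C\,\Phi _{0,k}(t)$ for all small $t\ge 0$, which is precisely condition (3) in Proposition \ref{Prop:ModPhiPsi}. I would establish this comparison first, then feed it into the equivalence of conditions (2) and (3) in Proposition \ref{Prop:ModPhiPsi} to obtain the sequence-space inclusion in the Banach case.

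The main obstacle is making the passage from the one-dimensional comparison $\Psi _{0,k}\lesssim \Phi _{0,k}$ near the origin to the genuinely \emph{mixed} two-index sequence inclusion, since Proposition \ref{Prop:ModPhiPsi} is stated for Young functions and I must be careful that the nested-norm structure of $\ell^{\Phi _{0,1},\Phi _{0,2}}$ interacts correctly with the small-$t$ comparison in each slot separately. The safest route is to prove the inner inclusion $\ell^{\Phi _{0,1}}\subseteq \ell^{\Psi _{0,1}}$ uniformly in the second index, then use monotonicity of the outer Orlicz norm together with $\Psi _{0,2}\lesssim \Phi _{0,2}$ to close the estimate; a clean way to organize this is to appeal directly to the one-dimensional version of Proposition \ref{Prop:ModPhiPsi} (the equivalence (2)$\Leftrightarrow$(3)) in each variable and combine. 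With the weight removed, the $r_0$-reduction performed, and the small-$t$ comparison in hand, the inclusion with continuous embedding follows, and transporting back through \eqref{Eq:OrliczNormTransfers} and the re-weighting isometry completes (1), hence (2).
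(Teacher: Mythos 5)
Your proposal is correct, and its skeleton coincides with the paper's proof: reduce (2) to (1) via Theorem \ref{Thm:Gabor}, remove the weight by the isometry $a\mapsto a\cdot \omega$, pass from quasi-Young to Young functions through the $r_0$-th power relation \eqref{Eq:OrliczNormTransfers}, and extract from \eqref{Eq:delta} the comparison $\Psi _k(t)\lesssim \Phi _k(t)$ for $0\le t\le t_0$. The one genuine divergence is the final step. You invoke the implication (3)$\Rightarrow$(2) of Proposition \ref{Prop:ModPhiPsi} directly on the pair $(\Phi _{0,k},\Psi _{0,k})$; the paper instead introduces the truncated Young functions $\Phi _{*,k},\Psi _{*,k}$ (equal to $\Phi _k,\Psi _k$ on $[0,t_0]$ and $\infty$ beyond), uses Proposition \ref{Prop:ModPhiPsi} only to establish the topological identity $\ell ^{\Phi _1,\Phi _2}=\ell ^{\Phi _{*,1},\Phi _{*,2}}$, and then concludes from the \emph{global} pointwise domination $\Psi _{*,k}(t)\lesssim \Phi _{*,k}(t)$ for all $t\ge 0$, which yields the norm bound by elementary monotonicity of Orlicz norms. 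The routes are equally valid: the truncation buys an explicit quantitative estimate from a domination valid everywhere, while your direct appeal requires upgrading the set inclusion in Proposition \ref{Prop:ModPhiPsi} to a continuous embedding — but this is the same implicit upgrade the paper makes when it asserts $\asymp$ from that proposition, and it follows by the closed graph theorem since both Banach sequence spaces embed continuously into $\ell _0'(\zdd)$ with coordinatewise convergence. Your "safest route" of arguing slot by slot in the mixed norm is sound but unnecessary, since Proposition \ref{Prop:ModPhiPsi} is already stated for the mixed spaces $\ell ^{\Phi _1,\Phi _2}(\Lambda \times \Lambda )$. One detail to make explicit in a final write-up: the four quasi-Young functions may a priori have different orders, so fix $r_0$ as the minimum of these orders (a quasi-Young function of order $q$ is also of any order $r_0\le q$, since $s\mapsto \Phi _0(s^{q/r_0})$ is again a Young function), write $\Phi _k(t)=\Phi _{0,k}(t^{r_0})$ and $\Psi _k(t)=\Psi _{0,k}(t^{r_0})$, and note that the small-$t$ comparison transfers to $\Psi _{0,k}\lesssim \Phi _{0,k}$ near the origin by the substitution $s=t^{r_0}$.
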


\par

\begin{proof}
By Theorem \ref{Thm:Gabor} it suffices to prove (1). Since $a\mapsto a\cdot
\omega$ is isometric bijection from $\ell^{\Phi _1, \Phi _2}_{(\omega)}(\zdd)$
to $\ell^{\Phi _1, \Phi _2}(\zdd)$, we may assume that $\omega=1$. In view
of \eqref{Eq:delta}, there is a $t_0>0$ such that
$$
\Psi _k(t)\lesssim\Phi _k(t), \quad 0\le t\le t_0,\, k=1,2.
$$
Let
\begin{align*}
    \Phi _{*,k}(t)=
    \begin{cases}
      \Phi _k(t), & 0\le t\le t_0,
      \\[1ex]
      \infty, &t>t_0
    \end{cases}
\end{align*}
and
\begin{align*}
    \Psi _{*,k}(t)=
    \begin{cases}
      \Psi _k(t), & 0\le t\le t_0,
      \\[1ex]
      \infty, &t>t_0.
    \end{cases}
\end{align*}
We claim
\begin{equation}\label{Eq:last}
\ell^{\Phi _1, \Phi _2}(\zdd)=\ell^{{\Phi}_{*,1}, {\Phi}_{*,2}}(\zdd),
\end{equation}
also in topological sense.

\par

In fact, let
$$
\Phi _{*,k}^0(t) = \Phi _{*,k} (t^{\frac 1{r_0}})
\quad \text{and}\quad
\Phi _{k}^0(t) = \Phi _{k} (t^{\frac 1{r_0}}),\quad k=1,2.
$$
Then $\Phi _{*,k}^0$ and $\Phi _{k}^0$ are Young functions, and by
Proposition \ref{Prop:ModPhiPsi} we have
$$
\nm a{\ell^{{\Phi}_{*,1}^0, {\Phi}_{*,2}^0}} \asymp \nm a{\ell^{{\Phi}_{1}^0, {\Phi}_{2}^0}}.
$$
This gives
\begin{equation}\label{Eq:Last2}
  \nm a{\ell^{{\Phi}_{*,1}, {\Phi}_{*,2}}} =
\nm {|a|^{r_0}}{\ell^{{\Phi}_{*,1}^0, {\Phi}_{*,2}^0}}^{\frac 1{r_0}}
\asymp
\nm {|a|^{r_0}}{\ell ^{{\Phi}_{1}^0, {\Phi}_{2}^0}}^{\frac 1{r_0}}
\asymp
\nm a{\ell^{{\Phi}_{1}, {\Phi}_{2}}},
\end{equation}
and \eqref{Eq:last} follows.

\par

By \eqref{Eq:Last2} and the fact that
$$
\Psi _{*,k}(t)\lesssim \Phi _{*,k}(t),\quad t\in \rd_+,
$$
we get
$$
\Vert a\Vert_{\ell^{\Psi _1, \Psi _2}}
\le \Vert a\Vert_{\ell^{\Psi _{*,1}, \Psi _{*,2}}}
\lesssim \Vert a\Vert_{\ell^{\Phi _{*,1} \Phi _{*,2}}}
\asymp \Vert a\Vert_{\ell^{\Phi _1, \Phi _2}},
$$
and the result follows.
\end{proof}

\par

By Theorem \ref{Thm:OrliczModInvariance} and its proof we may now
extend Proposition \ref{Prop:ModPhiPsi} to the quasi-Banach case as
follows. The details are left for the reader.

\par

\begin{prop}\label{Prop:OrliczModInvariance}
Let $\Phi _j, \Psi _j,\ j=1,2$ be quasi-Young functions and $\omega\in\mascP_E(\rdd )$.
Then the following conditions are equivalent:
\begin{enumerate}
    \item $M^{\Phi _{1},\Phi _{2}}_{(\omega)}(\rd)\subseteq
    M^{\Psi _{1}, \Psi _{2}}_{(\omega)}(\rd)$;

    \vrum

    \item $\ell^{\Phi _{1},\Phi _{2}}_{(\omega)}(\zdd)\subseteq
    \ell^{\Psi _{1}, \Psi _{2}}_{(\omega)}(\zdd)$;

    \vrum

    \item there is a constant $t_0 >0$ such that $\Psi _{j} (t)
    \lesssim  \Phi _{j} (t)$ for all $0\leq t\leq t_0$.
\end{enumerate}
\end{prop}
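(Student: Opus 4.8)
The plan is to close a cycle of implications by combining the Gabor machinery of Theorem~\ref{Thm:Gabor} with the $r_0$-power reduction already used in the proof of Theorem~\ref{Thm:OrliczModInvariance}, via Proposition~\ref{Prop:ModPhiPsi} in the Young case. Throughout I would fix $r_0\in(0,1]$ to be a common order for all four quasi-Young functions $\Phi _1,\Phi _2,\Psi _1,\Psi _2$, and choose a single window $\phi\in M^{r_0}_{(v)}(\rd)$ whose canonical dual $\psi$ also lies in $M^{r_0}_{(v)}(\rd)$ with respect to a lattice $\ep \zdd$, so that Theorems~\ref{Thm:AnalysisOp}, \ref{Thm:SynthesisOp} and~\ref{Thm:Gabor} apply simultaneously to $M^{\Phi _1,\Phi _2}_{(\omega)}$, to $M^{\Psi _1,\Psi _2}_{(\omega)}$, and to the corresponding sequence spaces.

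First I would prove the equivalence $(2)\Leftrightarrow(3)$ entirely on the sequence side, where the dilation $a\mapsto |a|^{r_0}$ is available. As in \eqref{Eq:Last2}, after reducing to $\omega=1$ through the isometry $a\mapsto a\cdot \omega$, one has $\nm a{\ell^{\Phi _1,\Phi _2}}=\nm{|a|^{r_0}}{\ell^{\Phi _1^0,\Phi _2^0}}^{1/r_0}$ with $\Phi _k^0(s)=\Phi _k(s^{1/r_0})$ Young functions, and likewise for $\Psi$. Since membership is determined by $|a|$, the inclusion $\ell^{\Phi _1,\Phi _2}\subseteq \ell^{\Psi _1,\Psi _2}$ is equivalent to $\ell^{\Phi _1^0,\Phi _2^0}\subseteq \ell^{\Psi _1^0,\Psi _2^0}$, which by Proposition~\ref{Prop:ModPhiPsi} is equivalent to $\Psi _k^0(s)\lesssim \Phi _k^0(s)$ for $s$ near $0$. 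Substituting $s=t^{r_0}$ turns this into $\Psi _k(t)\lesssim \Phi _k(t)$ for $0\le t\le t_0$, i.e. condition $(3)$.

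Next I would transfer the easy implication to the modulation-space side. For $(2)\Rightarrow(1)$, given $f\in M^{\Phi _1,\Phi _2}_{(\omega)}(\rd)$, Theorem~\ref{Thm:AnalysisOp} yields $C_\phi^\ep f\in \ell^{\Phi _1,\Phi _2}_{(\omega)}(\zdd)\subseteq \ell^{\Psi _1,\Psi _2}_{(\omega)}(\zdd)$ by $(2)$; since $f=D_\psi^\ep C_\phi^\ep f$ by Theorem~\ref{Thm:Gabor} and $D_\psi^\ep$ is continuous into $M^{\Psi _1,\Psi _2}_{(\omega)}(\rd)$ by Theorem~\ref{Thm:SynthesisOp}, we get $f\in M^{\Psi _1,\Psi _2}_{(\omega)}(\rd)$, that is $(1)$. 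Combined with $(2)\Leftrightarrow(3)$ this already gives $(3)\Rightarrow(1)$, in accordance with Theorem~\ref{Thm:OrliczModInvariance}.

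The main obstacle is the remaining converse $(1)\Rightarrow(2)$, and here the naive transfer fails: although $D_\psi^\ep C_\phi^\ep=I$ on the function side, the sequence operator $G=C_\phi^\ep D_\psi^\ep$ is merely an idempotent projection onto the range of $C_\phi^\ep$, so one cannot recover an arbitrary $c\in \ell^{\Phi _1,\Phi _2}_{(\omega)}$ from $D_\psi^\ep c$. What $(1)$ does provide, by composing the synthesis bound $\nm{D_\psi^\ep c}{M^{\Phi _1,\Phi _2}_{(\omega)}}\lesssim \nm c{\ell^{\Phi _1,\Phi _2}_{(\omega)}}$, the (closed graph) boundedness of the inclusion $M^{\Phi _1,\Phi _2}_{(\omega)}\hookrightarrow M^{\Psi _1,\Psi _2}_{(\omega)}$ between the quasi-Banach spaces of Theorem~\ref{Thm:Bargmann}, and the analysis bound of Theorem~\ref{Thm:AnalysisOp}, is the estimate $\nm{Gc}{\ell^{\Psi _1,\Psi _2}_{(\omega)}}\lesssim \nm c{\ell^{\Phi _1,\Phi _2}_{(\omega)}}$ for every finitely supported $c$. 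The plan is to upgrade this to the pointwise comparison $(3)$ by choosing $\phi,\psi$ well localized and the lattice $\ep \zdd$ dense enough that $G$ is concentrated near its diagonal, and then testing against plateau sequences $c=t\cdot \mathbf 1_F$ over large boxes $F$, for which $Gc$ stays comparable to $t$ on the interior of $F$; this forces $\Psi _k(t)\lesssim \Phi _k(t)$ for small $t$. Controlling the off-diagonal decay of the cross-Gram operator uniformly with respect to the Orlicz norms is where the real work lies; once $(3)$ is secured, $(2)$ follows from the already established $(2)\Leftrightarrow(3)$, closing all equivalences.
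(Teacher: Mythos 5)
Your handling of $(2)\Leftrightarrow(3)$ and of $(2)\Rightarrow(1)$ coincides with what the paper intends: the paper proves this proposition simply by pointing to Theorem~\ref{Thm:OrliczModInvariance} ``and its proof'', where the reduction to $\omega =1$, the power substitution $\Phi _k^0(t)=\Phi _k(t^{1/r_0})$ and the appeal to Proposition~\ref{Prop:ModPhiPsi} are exactly the manipulations around \eqref{Eq:last} and \eqref{Eq:Last2}, and where the transfer between sequence and modulation spaces is done by Theorem~\ref{Thm:Gabor}. You have also correctly spotted the one point that the paper's one-line deferral glosses over: $(1)\Rightarrow(2)$ cannot be obtained by running the Gabor machinery backwards, since $G=C_\phi ^\ep D_\psi ^\ep$ is only an idempotent onto the range of $C_\phi ^\ep$. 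That diagnosis is right and is genuinely the delicate part of the statement.

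However, your write-up does not close this implication: the final paragraph is a plan whose decisive step you yourself defer (``where the real work lies''), and as formulated it would fail. Making $\ep \zdd$ \emph{denser} makes $G$ less, not more, diagonally concentrated: its matrix satisfies $|Gc(n)|\le \sum _m |c(m)|\, h(n-m)$ with $h(k)=|V_\phi \psi (\ep k)|$, so shrinking $\ep$ puts ever more lattice points inside the essential support of $V_\phi \psi$, whereas coarsening the lattice shrinks the off-diagonal entries. The workable version goes the opposite way: take $\phi =\psi =\phi _0$ the Gaussian over a \emph{sparse} lattice $\ep \zdd$ with $\ep$ large, giving up the dual-frame identity, which is never used here --- the synthesis bound $\nm {D_{\phi _0}^\ep c}{M^{\Phi _1,\Phi _2}_{(\omega )}}\lesssim \nm c{\ell ^{\Phi _1,\Phi _2}_{(\omega )}}$ rests only on Lemma~\ref{Lemma:No2.10} and Theorem~\ref{Thm:ModWienerInv}, and the sampling bound $\nm {C_{\phi _0}^\ep f}{\ell ^{\Psi _1,\Psi _2}_{(\omega )}}\lesssim \nm f{M^{\Psi _1,\Psi _2}_{(\omega )}}$ only on Lemma~\ref{Lemma:No2.8} and Proposition~\ref{Thm:OppWienEst1}, neither of which needs a frame. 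Then $C_{\phi _0}^\ep D_{\phi _0}^\ep c(n)=\sum _m c(m)e^{i\theta (n,m)}b(n-m)$ with $b(k,\kappa )=|V_{\phi _0}\phi _0(\ep k,\ep \kappa )|$, and by Gaussian decay $\nm {b-b(0)\delta _0}{\ell ^{r_0}_{(v)}}\to 0$ as $\ep \to \infty$, where $\delta _0$ is $1$ at the origin and $0$ otherwise; the $r_0$-triangle inequality \eqref{Eq:WeakTriangle2} together with Corollary~\ref{Cor:Young} then gives $\nm c{\ell ^{\Psi _1,\Psi _2}_{(\omega _\ep )}}\lesssim \nm {Gc}{\ell ^{\Psi _1,\Psi _2}_{(\omega _\ep )}}$ for $\ep$ large, with $\omega _\ep =\omega (\ep \, \cdo )$. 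Combined with your closed-graph step (legitimate: both spaces are complete by Theorem~\ref{Thm:Bargmann} and embed continuously into $M^\infty _{(1/v)}(\rd )$ by Corollary~\ref{Cor:ModWienerInv}), this yields $\ell ^{\Phi _1,\Phi _2}_{(\omega _\ep )}\subseteq \ell ^{\Psi _1,\Psi _2}_{(\omega _\ep )}$ for \emph{all} sequences at once --- no plateau testing and no uniform off-diagonal analysis --- and since your $(2)\Leftrightarrow(3)$ holds for any moderate weight, $(3)$ and then $(2)$ for the original $\omega$ follow. Until some such argument is actually carried out, the arrow $(1)\Rightarrow(2)$ remains unproven in your proposal, so as it stands the proof is incomplete at precisely the point where the proposition goes beyond Theorem~\ref{Thm:OrliczModInvariance}.
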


\par

Next we discuss compactness of Orlicz modulation spaces.
The following result follows by similar arguments as for \cite[Theorem 3.9]{PfTo},
using the fact that $M^{\Phi _1,\Phi _2}_{(\omega)} (\rd)$ is continuously embedded
in $M^\infty_{(\omega)} (\rd)$ in view of Corollary \ref{Cor:ModWienerInv}.
The details are left for the reader.
\begin{thm}
Let $\omega_1,\omega_2 \in \mascP_E(\rdd )$. Then the injection map
    \begin{equation*}
           i: M^{\Phi _1,\Phi _2}_{(\omega_1)} (\rd)
           \to
           M^{\Phi _1,\Phi _2}_{(\omega_2)}(\rd)
    \end{equation*}
is compact if and only if
$$
\lim _{|X|\to \infty} \frac {\omega_2(X)}{\omega_1(X)}=0.
$$
\end{thm}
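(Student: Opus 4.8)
The plan is to mirror the argument of \cite[Theorem 3.9]{PfTo}, using the Gabor characterization of Theorem \ref{Thm:Gabor} to transfer the question to the sequence side for the sufficiency, and an explicit family of time-frequency shifts for the necessity. Throughout I note that for $i$ to be a well-defined bounded injection one needs $M^{\Phi _1,\Phi _2}_{(\omega_1)}(\rd)\subseteq M^{\Phi _1,\Phi _2}_{(\omega_2)}(\rd)$, which forces $\omega_2\lesssim \omega_1$; hence the ratio $\omega_2/\omega_1$ is bounded and the statement only concerns its decay. First I would fix $\ep>0$ and windows $\phi,\psi\in M^{r_0}_{(v)}(\rd)$ generating dual Gabor frames, so that $D_\psi^\ep \circ C_\phi^\ep$ is the identity by Theorem \ref{Thm:Gabor}(1), and factor the injection as
$$
i=D_\psi^\ep \circ j\circ C_\phi^\ep ,
$$
where $j$ is the canonical injection of $\ell^{\Phi _1,\Phi _2}_{(\omega_1)}(\zdd)$ into $\ell^{\Phi _1,\Phi _2}_{(\omega_2)}(\zdd)$. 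By Theorems \ref{Thm:AnalysisOp} and \ref{Thm:SynthesisOp} the outer maps are bounded, so it suffices to show $j$ is compact.

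To treat $j$, I would use the isometric identifications $a\mapsto a\,\omega_k$ of $\ell^{\Phi _1,\Phi _2}_{(\omega_k)}$ with the unweighted $\ell^{\Phi _1,\Phi _2}$; under these $j$ becomes the multiplication operator $M_m$ by the sampled sequence $m(n)=\omega_2(\ep n)/\omega_1(\ep n)$. Since Orlicz quasi-norms are monotone and homogeneous, one has $\nm{M_m a}{\ell^{\Phi _1,\Phi _2}}\le \nm m{\ell^\infty}\nm a{\ell^{\Phi _1,\Phi _2}}$, and the truncations $M_{m\chi_N}$ (where $\chi_N$ is the indicator of $\{|n|\le N\}$) have finite-dimensional range, hence are compact. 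The hypothesis $\omega_2(X)/\omega_1(X)\to 0$ gives $\sup_{|n|>N}|m(n)|\to 0$, so the operator quasi-norm of $M_m-M_{m\chi_N}$ tends to $0$. As $\ell^{\Phi _1,\Phi _2}_{(\omega_2)}(\zdd)$ is complete, $M_m$ is a quasi-norm limit of finite-rank operators and therefore compact; thus $j$, and hence $i$, is compact.

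For the necessity I would argue by contraposition. If $\omega_2/\omega_1$ does not tend to $0$, pick $\delta>0$ and $X_n=(x_n,\xi_n)\to\infty$ with $\omega_2(X_n)\ge \delta\,\omega_1(X_n)$, and set $f_n(x)=e^{i\scal x{\xi_n}}\phi _0(x-x_n)\in \Sigma_1(\rd)\subseteq M^{\Phi _1,\Phi _2}_{(\omega_k)}(\rd)$. Then $|V_{\phi _0}f_n(X)|=|V_{\phi _0}\phi _0(X-X_n)|$ is a fixed Gaussian profile centred at $X_n$; since the unweighted mixed Orlicz quasi-norm is translation invariant and each $\omega_k$ is $v$-moderate, one extracts the two-sided estimate $\nm{f_n}{M^{\Phi _1,\Phi _2}_{(\omega_k)}}\asymp \omega_k(X_n)$. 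Normalising $g_n=f_n/\nm{f_n}{M^{\Phi _1,\Phi _2}_{(\omega_1)}}$ then gives a sequence bounded in $M^{\Phi _1,\Phi _2}_{(\omega_1)}(\rd)$ with $\nm{g_n}{M^{\Phi _1,\Phi _2}_{(\omega_2)}}\gtrsim \delta$.

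Finally, since the windows are Gelfand--Shilov, $V_{\phi _0}g_n$ concentrates at $X_n\to\infty$, so testing against $\Sigma_1$ and using the uniform bound from $M^{\Phi _1,\Phi _2}_{(\omega_2)}(\rd)\hookrightarrow M^\infty_{(\omega_2)}(\rd)$ (Corollary \ref{Cor:ModWienerInv}) shows $g_n\to 0$ in the weak$^*$ topology of $M^\infty_{(\omega_2)}(\rd)$. Were $i$ compact, a subsequence $g_{n_k}$ would converge in $M^{\Phi _1,\Phi _2}_{(\omega_2)}$-quasi-norm; its limit would coincide with the weak$^*$ limit $0$, contradicting $\nm{g_{n_k}}{M^{\Phi _1,\Phi _2}_{(\omega_2)}}\gtrsim \delta$. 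I expect the main obstacle to be the equivalence $\nm{f_n}{M^{\Phi _1,\Phi _2}_{(\omega_k)}}\asymp \omega_k(X_n)$, which must be read off from translation invariance of the Orlicz quasi-norm and the moderateness bounds rather than from $L^p$-scaling, together with the careful passage from weak$^*$ convergence on the dense subspace $\Sigma_1$ to the full predual of $M^\infty_{(\omega_2)}(\rd)$.
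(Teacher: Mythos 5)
Your proposal is correct and takes essentially the same route as the paper: the paper's proof consists precisely of invoking the argument of \cite[Theorem 3.9]{PfTo} together with the embedding $M^{\Phi _1,\Phi _2}_{(\omega)}(\rd)\hookrightarrow M^{\infty}_{(\omega)}(\rd)$ from Corollary \ref{Cor:ModWienerInv}, leaving the details to the reader. Your Gabor factorization $i=D_\psi^\ep \circ j\circ C_\phi^\ep$ reducing sufficiency to a multiplication operator on sequence spaces, and your time-frequency-shifted Gaussians combined with the weak$^*$ argument via the $M^\infty_{(\omega_2)}$-embedding for necessity, are exactly the details of that Pfeuffer--Toft argument, carried out correctly.
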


\par

\end{document}